\begin{document}

\date{\today}

\title{Dehn functions of subgroups of right-angled Artin groups}

\author{Noel Brady}
\address{Department of Mathematics\\
  University of Oklahoma \\
  Norman\\   OK 73019\\ USA} \email{nbrady@ou.edu}

\author{Ignat Soroko}
\address{Department of Mathematics\\
  University of Oklahoma \\
  Norman\\   OK 73019\\ USA} \email{ignat.soroko@ou.edu}

\subjclass[2010]{Primary  20E05, 20F65, 20F67, 57M20.}

\begin{abstract}
We show that for each positive integer $k$ there exist right-angled Artin groups containing free-by-cyclic subgroups whose monodromy automorphisms grow as $n^k$. As a consequence we produce examples of right-angled Artin groups containing finitely presented subgroups whose Dehn functions grow as $n^{k+2}$.
\end{abstract}

\maketitle

\tableofcontents

\section{Introduction}

There has been intense interest in subgroups of right-angled Artin groups (RAAGs) in recent years. This is due largely to the work of Ian Agol, Dani Wise and others on the virtual fibering question in $3$--manifold topology.

In~\cite{AgolCrit} Agol showed that if $M^3$ is a compact, oriented, irreducible $3$--manifold with $\chi(M)=0$ and $\pi_1(M)$ is a subgroup of a RAAG, then $M$ virtually fibers. In~\cite{HW} Haglund and Wise showed that fundamental groups of special cubical complexes are subgroups of RAAGs. Building on this machinery, Agol went on to solve the virtual fibering conjecture in~\cite{AgolVFC}. The fundamental result in~\cite{AgolVFC} is that non-positively curved cubical complexes with hyperbolic fundamental groups are virtually special.

In this paper we consider two  questions about subgroups of RAAGs. 

The first question asks which free-by-cyclic groups virtually embed in RAAGs. In~\cite{HagW1,HagW2} Hagen and Wise show that hyperbolic free-by-cyclic groups virtually embed in RAAGs. It is also known that $F_2\rtimes\Z$ groups virtually embed in RAAGs. The hyperbolic free-by-cyclic examples all have exponentially growing monodromy automorphisms and the $F_2 \rtimes \Z$ groups have exponential or linear monodromy automorphisms. In~\cite{Ger}, Gersten gives an explicit example of an $F_3\rtimes\Z$ group which does not virtually embed in a RAAG. The group considered by Gersten is not a CAT(0) group, and this prompts the following open question. \textit{Does every CAT(0) free-by-cyclic group virtually embed in a RAAG?}  The family of the so-called Hydra groups considered in~\cite{DR} provides a test case for this question where the monodromy automorphisms grow polynomially with arbitrary degree. While we haven't proved that Hydra groups virtually embed in RAAGs (this is a topic of an ongoing research of the second-named author), we construct analogues of the Hydra groups (where the base $\Z^2$ subgroup is replaced by a more complicated RAAG) which are CAT(0) free-by-cyclic with polynomially growing monodromy automorphisms of arbitrary degree and which are virtually special. 

\begin{thma} For each positive \emph{even} integer $m$ there exist virtually special free-by-cyclic groups $G_{m,m}\cong F_{2m}\rtimes_\phi\Z$ with growth function $\gr_\phi(n)\sim n^m$ and $G_{m,m-1}\cong F_{2m-1}\rtimes_{\phi'}\Z$ with growth function $\gr_{\phi'}(n)\sim n^{m-1}$.
\end{thma}

Since finite index subgroups of free-by-cyclic groups are again free-by-cyclic and since special groups embed into RAAGs we obtain the following corollary.  

\begin{cora} For each positive integer $k$ there exist a right-angled Artin group containing a free-by-cyclic subgroup whose monodromy automorphism has growth function $\sim n^k$.
\end{cora}

The second question asks what kinds of functions arise as Dehn functions of finitely presented subgroups of RAAGs. 
Recall that Dehn functions capture the isoperimetric behavior of Cayley complexes of groups. A lot is known about Dehn functions of arbitrary finitely presented groups (see~\cite{BradyBridson,BORS,SBR}). For example, a group is hyperbolic if and only if its Dehn function is linear. CAT(0) groups and, in particular, RAAGs, have Dehn functions which are either quadratic or linear.  In~\cite{BF} there are examples of CAT(0) groups which contain finitely presented subgroups whose Dehn functions are of the form $n^\alpha$ for a dense set of $\alpha \in [2,\infty)$. Restricting to the case where the ambient groups are RAAGs it gets harder to find examples of subgroups with a wide variety of Dehn functions. In~\cite{BradyRileyShort} there are examples of finitely presented Bestvina--Brady kernels of RAAGs which have polynomial Dehn functions of degree $3$ or $4$. In \cite{Dison} it is shown that the Dehn function of such kernels of RAAGs are at most quartic. In~\cite{BriMRL} Bridson provides an example of a RAAG containing a finitely presented group with exponential Dehn function. 
Our second result shows that there are finitely presented subgroups of RAAGs whose Dehn functions are polynomial of arbitrary degree. 

\begin{thmb} For each positive integer $k$ there exists a right-angled Artin group which contains a finitely presented subgroup with Dehn function $\simeq n^k$.
\end{thmb}

This is the extent of what is currently known about the isoperimetric behavior of subgroups of RAAGs; it would be very interesting if one could produce examples with other types of Dehn functions. 

\bigskip
Our paper is organized as follows. 

In section~2 we introduce the growth functions of automorphisms and prove a folklore result (Proposition~\ref{prop:growth}) that the growth of an automorphism of a free group is invariant under taking powers of the automorphism and under passing to a subgroup of finite index. In doing that, we rely on the Gilbert Levitt's Growth Theorem~\cite{Lev} (whose proof uses train-track machinery). We also provide an example due to Yves Cornulier which demonstrates that for arbitrary groups the invariance of the growth function under passing to a subgroup of finite index does not hold.

Section~3 is devoted to providing estimates for the Dehn function of the Bieri double of a free-by-cyclic group in terms of the growth of the monodromy automorphism. We use Bridson's lower bound from~\cite{BriPoly} (Proposition~\ref{prop:lower}) and adapt the proof of the upper bound, given in~\cite{BriPit} for the abelian-by-cyclic setting, to the case of free-by-free groups needed for our construction (Proposition~\ref{prop:upper}). Using these estimates, we show later in section~7 that for the polynomially growing monodromy automorphisms involved in our construction, the upper and lower bounds on the Dehn function of the Bieri double actually coincide. If the monodromy automorphism has polynomial growth of order $n^k$, then the Dehn function of the Bieri double grows like $n^{k+2}$.

In section~4 we recollect all the relevant definitions related to the Morse theory on groups and special cubical complexes, which will be used in sections~5 and 6.

In section~5 we introduce the free-by-cyclic groups $G_{m,k}$, which play the central role in our construction. We define these groups through LOG notation, which is a graphical tool to encode conjugation relations. We prove that the group $G_{m,k}$ is CAT(0) and free-by-cyclic, and exhibit explicit formulas for its monodromy automorphism (Proposition~\ref{prop:phi}). We defer until section~8 the proof that this automorphism has growth $\sim n^k$.

The goal of section~6 is to exhibit a finite special cover for the presentation complex $K_{m,m}$ of the group $G_{m,m}$, for arbitrary even $m$. The construction is done in several stages. First, for arbitrary $m$, we engineer a certain right action of $G_{m,m}$ on a set of cardinality $2^{2m+1}$, which may be thought of as the $0$--skeleton of a $(2m+1)$--dimensional torus $\T_{2m+1}$. This action defines a finite cover $\widehat K_m\to K_{m,m}$, which cellularly embeds into the $2$--skeleton of $\T_{2m+1}$ (Proposition~\ref{prop:emb}). Since $\widehat K_m$ is a subcomplex of a product of graphs, it is free from three out of four hyperplane pathologies in the definition of a special cube complex (Proposition~\ref{prop:clean}). To eliminate the fourth hyperplane pathology we observe that for even values of $m$, the complex $K_{m,m}$ is a $VH$-complex in the terminology of~\cite{HW}. It follows that there exist another finite cover $\ov{K}_m\to \widehat K_m$, such that $\ov{K}_m$ is a special square complex (Proposition~\ref{prop:vh}).

In section~7 we bring all the pieces together and prove Theorems A and B. For even values of $m$, groups $G_{m,m}$ are virtually special free-by-cyclic with the monodromy automorphism growing as $n^m$. To obtain growth functions of odd degree, we observe that the presentation $2$--complex $K_{m,m-1}$ of the free-by-cyclic group $G_{m,m-1}$ is a combinatorial subcomplex of $K_{m,m-1}$ and it is obtained by deleting the hyperplane corresponding to the last generator $a_{2m+1}$. Thus the pullback of $K_{m,m-1}$ in $\ov{K}_m$ is a finite special square complex covering $K_{m,m-1}$. This makes $G_{m,m-1}$ a virtually special free-by-cyclic group with the monodromy automorphism growing as $n^{m-1}$.

To prove theorem B, we look at the Bieri double of the special (free-by-cyclic) finite index subgroups $H$ of $G_{m,m}$ ($G_{m,m-1}$), and prove that the lower and the upper bounds for its Dehn function coincide, and are of the order $n^{m+2}$ (resp., $n^{m+1}$). This Bieri double naturally embeds into a RAAG, whose underlying graph is the join of the underlying graph for the RAAG containing $H$ and the empty graph on two vertices.

In section~8 we provide the computation of the growth function for the monodromy automorphism of $G_{m,k}$ and its abelianization.

Finally, in section~9 we list two open questions related to the study in this paper.

\section{Preliminaries on growth}
\label{sec:prelims}

In what follows we will consider functions up to the following equivalence relations.
\begin{defi}
Two functions $f,g\colon[0,\infty)\to[0,\infty)$ are said to be $\sim$ equivalent if $f\pr g$ and $g\pr f$, where $f\pr g$ means that there exist constants $A>0$ and $B\ge0$ such that $f(n)\leq Ag(n)+B$ for all $n\geq0$. 
\end{defi}

\begin{defi}
Two functions $f,g\colon[0,\infty)\to[0,\infty)$ are said to be $\simeq$ equivalent if $f\prq g$ and $g\prq f$, where $f\prq g$ means that there exist constants $A,B>0$ and $C,D,E\ge0$ such that $f(n)\leq Ag(Bn+C)+Dn+E$ for all $n\geq0$. 
\end{defi}

We extend these equivalence relations to functions $\N\to[0,\infty)$ by assuming them to be constant on each interval $[n,n+1)$. 
\begin{rem}
Notice that $f\pr g$ implies $f\prq g$ and $f\sim g$ implies $f\simeq g$. However, the relation $\sim$ is strictly finer than $\simeq$, as the latter identifies all single exponential functions, i.e. $k^n\simeq K^n$ for $k,K>1$, whereas the relation $\sim$ does not. We will use the relations $\sim$, $\pr$ when dealing with growth functions of automorphisms and the relations $\simeq$, $\prq$ when discussing Dehn functions of groups (as is done traditionally). 
The term $Dn$ in the above definition of $\prq$ is essential for proving the equivalence of Dehn functions under quasi-isometries. 
\end{rem}

Let $F$ be a free group of finite rank $k$ with a finite generating set $\A$. Let $d_\A(x,y)$ be the associated word metric on $F$. If $\psi\colon G\to G$ is an automorphism, we define
\[
\gr_{\psi,\A}(n):=\max_{a\in\A}\|\psi^n(a)\|_\A,
\]
where $\|g\|_\A$ is equal to $d_\A(1,g)$ for $g\in F$. 

The following properties of $\gr_{\psi,\A}$ will be used in the sequel. 

\begin{prop}\label{prop:growth}
Let $F$ be a free group of finite rank with a finite generating set $\A$, and let $\psi$ be an automorphism of $F$. Then
\begin{itemize}
\item[(i)] for each finite generating set $\B$ of $F$, $\gr_{\psi,\B}\sim\gr_{\psi,\A}$;
\item[(ii)] for each $d\in\N$, $\gr_{\psi,\A}\sim \gr_{\psi^d,\A}$;
\item[(iii)] for each finite index subgroup $H\le F$ invariant under $\psi$ with a finite generating set $\B\subset H$, we have $\gr_{\psi,\A}\sim \gr_{\psi|_H,\B}$.
\end{itemize}
\end{prop}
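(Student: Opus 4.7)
The plan is to handle the three parts in order; part~(i) is elementary, (ii) relies on Levitt's structural theorem, and (iii) requires passing to a characteristic subgroup together with a free-group power-length inequality.

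For (i), I would invoke the standard fact that on a finitely generated free group any two finite generating sets yield bi-Lipschitz equivalent word metrics: there exist constants $C_1, C_2 \geq 1$ with $C_1^{-1} \|g\|_\B \leq \|g\|_\A \leq C_2 \|g\|_\B$ for every $g \in F$. Combining this with the triangle inequality applied to $\psi^n$ of a bounded-length $\B$-spelling of each $a \in \A$ yields $\gr_{\psi,\A} \preceq \gr_{\psi,\B}$; the reverse inequality is symmetric.

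For (ii), the definition immediately gives $\gr_{\psi^d,\A}(n) = \gr_{\psi,\A}(dn)$, so the task reduces to proving $\gr_{\psi,\A}(dn) \sim \gr_{\psi,\A}(n)$. The Lipschitz estimate $\gr_{\psi,\A}(n+1) \leq L\,\gr_{\psi,\A}(n)$ with $L := \max_{a \in \A}\|\psi(a)\|_\A$ handles one direction crudely, but for the clean $\sim$-equivalence I would invoke Levitt's Growth Theorem, which identifies $\gr_{\psi,\A}$ up to $\sim$ with a function of a specific shape coming from the train-track representative of $\psi$; on each such form one verifies directly that precomposition with $n \mapsto dn$ preserves the $\sim$-class.

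For (iii) the two inequalities behave asymmetrically. The direction $\gr_{\psi|_H,\B} \preceq \gr_{\psi,\A}$ follows from the quasi-isometric embedding of a finite-index subgroup: each $b \in \B$ has bounded $\A$-length $\leq M$, so $\|\psi^n(b)\|_\A \leq M \, \gr_{\psi,\A}(n)$, and the quasi-isometric embedding converts this $\A$-bound into a $\B$-bound of comparable order. The reverse direction $\gr_{\psi,\A} \preceq \gr_{\psi|_H,\B}$ is subtler because generators $a \in \A$ need not lie in $H$. Here I would pass to a characteristic (hence $\psi$-invariant and normal) subgroup $K \leq H$ of finite index in $F$ — concretely the intersection of all subgroups of $F$ of index at most $[F:H]$, a set that is finite by finite generation of $F$. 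Setting $N := [F:K]$, every $a^N$ lies in $K$, and the key free-group identity is that for $g = u\widehat g u^{-1} \neq 1$ written in reduced form with $\widehat g$ cyclically reduced one has $\|g\|_\A = 2|u| + |\widehat g|$ and $\|g^N\|_\A = 2|u| + N|\widehat g|$, so $\|g\|_\A \leq \|g^N\|_\A$. Applied with $g = \psi^n(a)$, this reduces the task to bounding $\|\psi^n(a^N)\|_\A$, which is controlled via the quasi-isometric embedding of $K$ in $F$ together with the forward direction applied to the pair $K \leq H$. The main obstacle is exactly this reverse inequality, where one simultaneously juggles three generating sets and leans on the power-length identity in free groups; part~(ii) is noteworthy in that it genuinely requires the structural content of Levitt's theorem rather than an elementary manipulation.
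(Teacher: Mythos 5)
Your proposal is correct, and for parts (i) and (ii) it follows the paper's route essentially verbatim: (i) is the same rewriting estimate with two constants, and for (ii) both you and the paper reduce to comparing $\|\psi^{dn}(a)\|_\A$ with $\|\psi^{n}(a)\|_\A$ and invoke Levitt's theorem. (A caveat you share with the paper: the check that replacing $n$ by $dn$ in $\lambda^n n^m$ preserves the $\sim$-class is only valid when $\lambda=1$, since $\lambda^{dn}\not\sim\lambda^n$ for $\lambda>1$ under this relation; this is harmless here because the proposition is only ever applied to polynomially growing automorphisms.)

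Part (iii) is where you genuinely diverge, namely in the inequality $\gr_{\psi,\A}\preceq\gr_{\psi|_H,\B}$. The paper uses Levitt's theorem to single out a generator $a_{i_0}$ of maximal growth, applies the free-group inequality $\|g\|_\A\le\|g^p\|_\A$ only to $\psi^n(a_{i_0})$, measures $\psi^n(a_{i_0}^p)$ in the augmented generating set $\B'=\B\cup\{a_{i_0}^p\}$ of $H$, and then removes the extra generator by appealing to part (i). You instead apply the same power-length inequality to every generator: once $a^N\in H$ for all $a\in\A$, one gets $\|\psi^n(a)\|_\A\le\|\psi^n(a^N)\|_\A\le\big(\max_{b\in\B}\|b\|_\A\big)\,\|a^N\|_\B\,\gr_{\psi|_H,\B}(n)$, so for (iii) neither Levitt's theorem nor the auxiliary set $\B'$ is needed; this is simpler and works equally well for exponentially growing $\psi$. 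Both arguments hinge on the same key lemma, $\|g\|_\A\le\|g^p\|_\A$ via the cyclically reduced form. Your detour through the characteristic subgroup $K$ is sound (it is normal of finite index $N$, so indeed $a^N\in K$), but it is unnecessary overhead: $H$ is already $\psi$-invariant by hypothesis, and a uniform exponent with $a^N\in H$ exists simply because for each $a$ the cosets $Ha^i$ cannot all be distinct, so some positive power of $a$ lies in $H$ and one takes the least common multiple; this is exactly how the paper produces its exponent $p$.
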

\begin{proof}
Let $\A=\{a_1,\dots,a_N\}$, $\B=\{b_1,\dots,b_M\}$. Since both sets $\A$ and $\B$ generate $F$, there exist words $w_i$ and $v_j$ such that $a_i=w_i(b_1,\dots,b_M)$ and $b_j=v_j(a_1,\dots,a_N)$, for all $1\le i\le N$, $1\le j\le M$. Let constants $K$ and $L$ denote the maximal lengths of $w_i$, $v_j$, respectively, i.e. $K=\max_{1\le i\le N}\|w_i\|_\B$, $L=\max_{1\le j\le M}\|v_j\|_\A$. Then, obviously, for all $i,j,n$, one has:
\[
\|\psi^n(a_i)\|_\B\le K\cdot \|\psi^n(a_i)\|_\A\text{\qquad and \qquad}\|\psi^n(b_j)\|_\A\le L\cdot \|\psi^n(b_i)\|_\B.
\]
Now fix arbitrary $1\le j\le M$ and assume without loss of generality that \\ 
$v_j(a_1,\dots, a_N)=a_{i_1}^{\eps_1}\dots a_{i_L}^{\eps_L}$, for values $1\le i_\ell\le N$ and $\eps_\ell=\pm1$ or $0$. Then
\begin{multline*}
\|\psi^n(b_j)\|_\B=
\|\psi^n(a_{i_1}^{\eps_1}\dots a_{i_L}^{\eps_L})\|_\B\le
\|\psi^n(a_{i_1})\|_\B+\dots+\|\psi^n(a_{i_L})\|_\B \le \\
K\|\psi^n(a_{i_1})\|_\A+\dots+K\|\psi^n(a_{i_L})\|_\A \le
KL\max_{1\le i\le N}\|\psi^n(a_i)\|_\A = KL \gr_{\psi,\A}(n).
\end{multline*}
Therefore, $\gr_{\psi,\B}(n)=\max_{1\le j\le M}\|\psi^n(b_j)\|_\B\le KL \gr_{\psi,\A}(n)$, and, by symmetry, $\gr_{\psi,\A}(n)\le LK \gr_{\psi,\B}(n)$. This proves (i).

Before proving parts (ii) and (iii), we state following remarkable result of Gilbert Levitt:
\begin{lgthm}[{\cite[Cor.~6.3]{Lev}}]
Let $F$ be a free group of finite rank with a free generating set $\A$. Given $\alpha\in\Aut(F)$ and $g\in F$, there exist $\lambda\ge1$, an integer $m\ge0$ and constants $A$, $B>0$ such that the word length $\|\alpha^n(g)\|_\A$ satisfies:
\[
\pushQED{\qed}
\forall n\in\N,\qquad A\lambda^nn^m\le\|\alpha^n(g)\|_\A\le B\lambda^nn^m.\qedhere
\popQED
\] 
\end{lgthm}

Consider a sequence of growth parameters $(\lambda_i,m_i)$ from the Levitt's Growth Theorem corresponding to the generators $a_1,\dots,a_N$, so that for each $1\le i\le N$ 
there exist constants $A_{i},B_{i}>0$ such that 
\[
A_{i}\lambda_i^nn^{m_i}\le \|\psi^n(a_i)\|_\A\le B_{i}\lambda_i^nn^{m_i}\text{\qquad for all $n\in\N$.}
\]
Order these parameters lexicographically: $(\lambda_i,m_i)<(\lambda_j,m_j)$ if and only if $\lambda_i<\lambda_j$ or $\lambda_i=\lambda_j$ and $m_i<m_j$. Clearly, $(\lambda_i,m_i)<(\lambda_j,m_j)$ if and only if 
$\lambda_j^nn^{m_j}/\lambda_i^nn^{m_i}\to \infty$ as $n\to\infty$.
Pick $1\le i_0\le N$ such that $(\lambda_{i_0},m_{i_0})$ is maximal with respect to this order. Then for any constants $C_1, C_2>0$ and arbitrary $1\le i\le N$ we have: 
\[
C_1\lambda_i^nn^{m_i}\ll C_2\lambda_{i_0}^nn^{m_{i_0}},
\]
which means that the left-hand side is less than or equal to the right-hand side for all large enough $n\in\N$.

To prove (ii) in one direction, notice that for any $1\le i\le N$,
\begin{multline*}
\|(\psi^d)^n(a_i)\|_\A\le B_i\lambda_i^{dn}(dn)^{m_i}=(B_i\lambda_i^d d^{m_i})\lambda_i^n n^{m_i}\ll
\tfrac{B_i\lambda_i^dd^{m_i}}{A_{i_0}}\big(A_{i_0}\lambda_{i_0}^nn^{m_{i_0}}\big)\le \\
\tfrac{B_i\lambda_i^dd^{m_i}}{A_{i_0}}
\|\psi^n(a_{i_0})\|_\A.
\end{multline*}
Hence, there exist a constant $C_{big}\ge0$ such that 
\[
\gr_{\psi^d,\A}(n)=\max_{1\le i\le N}\|(\psi^d)^n(a_i)\|_\A\le D\gr_{\psi,\A}(n)+C_{big},
\]
where $D=\max_{1\le i\le N}{B_i\lambda_i^dd^{m_i}}/{A_{i_0}}$. Thus, $\gr_{\psi^d,\A}\pr\gr_{\psi,\A}$.

In the opposite direction, for any $1\le i\le N$ we have:
\[
\|\psi^n(a_i)\|_\A \le B_{i}\lambda_i^nn^{m_i} \le B_{i}\lambda_{i}^{dn}(dn)^{m_i}\ll \tfrac{B_i}{A_{i_0}}\big(A_{i_0}\lambda_{i_0}^{dn}(dn)^{m_{i_0}}\big) \le \tfrac{B_i}{A_{i_0}}\|\psi^{dn}(a_{i_0})\|_\A.
\]
By taking maximum, we get for arbitrary $n\in\N$:
\[
\gr_{\psi,\A}(n)=\max_{1\le i\le N}\|\psi^n(a_i)\|_\A \le D\|\psi^{dn}(a_{i_0})\|_\A+C_{big}\le D\gr_{\psi^d,\A}(n)+C_{big}
\]
for $D={\max_iB_i}/{A_{i_0}}$ and some $C_{big}\ge0$. This proves that $\gr_{\psi,\A}\pr \gr_{\psi^d,\A}$ and hence that 
$\gr_{\psi,\A}\sim \gr_{\psi^d,\A}$.

To prove (iii) in one direction, notice first that $H$, being of finite index in $F$, is 
quasi-convex in $F$ (see e.g.~\cite[III.3.5]{BH}).
Hence for any $h\in H$, one has $\|h\|_\B\le C\|h\|_\A$ for some $C>0$. Writing each $b_j\in\B$ as a word $b_j=v_j(a_1,\dots,a_N)$ and setting $L=\max_{1\le j\le M}\|v_j\|_\A$, we obtain for arbitrary $1\le j\le M$:
\[
\|\psi^n(b_j)\|_\B\le C\|\psi^n(b_j)\|_\A\le CL\max_{1\le i\le N} \|\psi^n(a_i)\|_\A=CL \gr_{\psi,\A}(n),
\]
so that $\gr_{\psi|_H,\B}(n)=\max_{1\le j\le M}\|\psi^n(b_j)\|_\B\le CL \gr_{\psi,\A}(n)$, i.e. $\gr_{\psi|_H,\B}\pr\gr_{\psi,\A}$.

In the opposite direction, notice that there exist an integer $p>0$ such that for every $a_i\in\A$, we have $a_i^p\in H$. As above, let 
$(\lambda_i,m_i)$, $A_i,B_i>0$ be a sequence of growth parameters for the generators $a_1,\dots,a_N$, and let $(\lambda_{i_0}, m_{i_0})$ be maximal. Consider a new generating set $\B'$ for $H$, $\B'=\B\cup\{a_{i_0}^p\}$. Then for arbitrary $1\le i\le N$ 
we have:
\begin{multline*}
\|\psi^n(a_i)\|_\A \le B_{i}\lambda_i^nn^{m_i} \ll \tfrac{B_i}{A_{i_0}}\big(A_{i_0}\lambda_{i_0}^nn^{m_{i_0}}\big)\le 
\tfrac{B_i}{A_{i_0}}\|\psi^n(a_{i_0})\|_\A \le \\
\tfrac{B_i}{A_{i_0}}\|\psi^n(a_{i_0}^p)\|_\A\le \tfrac{B_i}{A_{i_0}}L\|\psi^n(a_{i_0}^p)\|_{\B'}, 
\end{multline*}
where $L$ has a similar meaning as above. Here the fourth inequality holds since, in general, for any automorphism $\alpha\in\Aut(F)$, any $g\in F$ and any $p>0$, one has $\|\alpha(g)\|_\A\le\|\alpha(g^p)\|_\A$. (Indeed, one can write $\alpha(g)=uvu^{-1}$ with $v$ cyclically reduced. Then $\|\alpha(g)\|=2\|u\|+\|v\|$, whereas $\|\alpha(g^p)\|=\|uv^pu^{-1}\|=2\|u\|+p\|v\|$.) 

By taking maximum, we get for arbitrary $n\in\N$:
\[
\gr_{\psi,\A}(n)=\max_{1\le i\le N}\|\psi^n(a_i)\|_\A\le D L\|\psi^n(a_{i_0}^p)\|_{\B'}+C_{big}\le DL\gr_{\psi|_H,\B'}(n)+C_{big}
\]
for $D={\max_iB_i}/{A_{i_0}}$ and some $C_{big}\ge0$. This means that $\gr_{\psi,\A}\pr\gr_{\psi|_H,\B'}$. Since, by part~(i), $\gr_{\psi|_H,\B'}\sim\gr_{\psi|_H,\B}$, this proves that
$\gr_{\psi,\A}\pr \gr_{\psi|_H,\B}$ and 
part (iii) is proved.
\end{proof}

\begin{rem} 
The proof of property (i) of Proposition~\ref{prop:growth} works for automorphisms of arbitrary finitely generated groups.
\end{rem}
\begin{rem}
Property (iii) of Proposition~\ref{prop:growth} does not hold for arbitrary finitely generated groups. We are grateful to Yves Cornulier for providing the following example. Let $G=\la a,b\mid aba^{-1}=b^{-1},a^2=1\ra$ be the infinite dihedral group. Then the inner automorphism $i_b\colon x\mapsto bxb^{-1}$ has linear growth, but its restriction to the index $2$ subgroup $\la b\ra$ is trivial. To see that, observe that $aba^{-1}=b^{-1}$ implies $ab=b^{-1}a$ and hence $ba=ab^{-1}$. Thus $bab^{-1}=ab^{-2}$ and $i_b^n(a)=b^nab^{-n}=ab^{-2n}$. Looking at the Cayley graph of $G$ shows that the element $g=ab^{-2n}$ is at distance $2n+1$ from $1$, so that $ab^{-2n}$ is a word of minimal length representing element $g$, and $i_b$ indeed grows linearly on $G$. 
\end{rem}

In view of item (i) in Proposition~\ref{prop:growth}, we will suppress the dependence on the generating set and adopt the notation
\[
\gr_{\psi}(n):=\gr_{\psi,\A}(n),
\]
for an arbitrary generating set $\A\subset F$.

\bigskip
For the abelianization $F_{ab}=F/[F,F]\cong\Z^k$ we consider the induced automorphism ${\psi^{ab}}\colon F_{ab}\to F_{ab}$ and denote $\{\bei\}$ be the generating set of $F_{ab}$ corresponding to $\A$: $\bei=a_i[F,F]$, $a_i\in\A$, $i=1,\dots,k$. For any $v\in\Z^k$ let $|v|_1$ denote the $\ell_1$-norm on $\Z^k$ viewed as a subset of $\C^k$: if $v=\sum_{i=1}^kc_i\bei$, then $|v|_1=\sum_{i=1}^k|c_i|$. Define
\[
\gr_{{\psi^{ab}}}(n):=\max_{i=1,\dots,k}|(\psi^{ab})^n(\bei)|_1.
\]
Then the following is true:
\begin{lem}\label{lem7}
\[
\gr_\psi(n)\geq \gr_{{\psi^{ab}}}(n).
\]
\end{lem}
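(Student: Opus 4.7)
My plan is to prove this via a pointwise inequality: for each generator $a_i \in \A$, the $\ell_1$-norm of the abelianized image of $\psi^n(a_i)$ is bounded above by its word length in $F$, and then take the maximum over $i$.

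First I would fix $i$ and write $\psi^n(a_i)$ as a reduced word $\psi^n(a_i) = a_{i_1}^{\eps_1} a_{i_2}^{\eps_2} \cdots a_{i_L}^{\eps_L}$ with $\eps_\ell \in \{\pm 1\}$ and $L = \|\psi^n(a_i)\|_\A$. The abelianization $F \to F_{ab}$ sends this word to $(\psi^{ab})^n(\bei) = \sum_{\ell=1}^{L} \eps_\ell \bar e_{i_\ell}$. By the triangle inequality for the $\ell_1$-norm (and the fact that each $\bar e_{i_\ell}$ is a standard basis vector, so $|\bar e_{i_\ell}|_1 = 1$),
\[
|(\psi^{ab})^n(\bei)|_1 = \Big|\sum_{\ell=1}^{L} \eps_\ell \bar e_{i_\ell}\Big|_1 \le \sum_{\ell=1}^{L} |\eps_\ell|\, |\bar e_{i_\ell}|_1 = L = \|\psi^n(a_i)\|_\A.
\]

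Taking the maximum over $i = 1, \dots, k$ on both sides then yields
\[
\gr_{\psi^{ab}}(n) = \max_i |(\psi^{ab})^n(\bei)|_1 \le \max_i \|\psi^n(a_i)\|_\A = \gr_\psi(n),
\]
which is the desired inequality.

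There is essentially no obstacle here: the proof is a one-line observation that the abelianization map cannot increase the $\ell_1$-norm relative to the word norm, since it collapses cancellations in the sense that signed exponent sums have absolute value at most the count of the corresponding letters. The only thing to be careful about is that the generating set $\{\bei\}$ of $F_{ab}$ is precisely the image of $\A$, so the identification of norms is canonical and no normalization constant appears.
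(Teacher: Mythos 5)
Your argument is correct and is essentially the paper's own proof: you both use that the abelianization map $\eps\colon F\to F_{ab}$ sends a reduced word of length $L$ to an element of $\Z^k$ with $\ell_1$-norm at most $L$, and then take the maximum over the generators. The paper phrases this via "the shortest word length in $F_{ab}$ is at most that in $F$, and equals the $\ell_1$-norm," whereas you write out the triangle inequality explicitly, but the content is identical.
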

\begin{proof}
Let $\eps\colon F\to F_{ab}$ be the natural homomorphism. Then
\[
(\psi^{ab})^n(\bei)=\eps(\psi^n(a_i))
\]
and hence the length of the shortest word in generators $\{\bei\}_{i=1}^k$ of the element $(\psi^{ab})^n(\bei)\in\Z^k$ is no bigger than $\|\psi^n(a_i)\|_\A$. But the former is equal to $|(\psi^{ab})^n(\bei)|_1$ hence $|(\psi^{ab})^n(\bei)|_1\leq \|\psi^n(a_i)\|_\A$ for all $i=1,\dots,k$. By taking maximum, we get the required inequality.
\end{proof}

By embedding $\Z^k$ into $\C^k$ we may consider $\C^k$ as a vector space with the basis $\{\bei\}_{i=1}^k$.
Now let $A$ be a linear operator on $\C^k$ given in basis $\{\bei\}_{i=1}^k$ by the matrix $(a_{ij})_{i,j=1}^k$, and let $|v|_\infty$ denote the $\ell_\infty$-norm on $\C^k$: if $v=\sum_{i=1}^kc_i\bei$, then $|v|_\infty=\max_{i=1,\dots,k}|c_i|$. Consider two norms on $\End(\C^k)$, one is the operator norm with respect to $\ell_\infty$:
\[
\|A\|_{op}=\sup_{v\ne0}\frac{|Av|_\infty}{|v|_\infty}=\sup_{|v|_\infty=1}|Av|_\infty,
\]
and another one is the supremum norm, which is the $\ell_\infty$-norm on the space $\C^{k^2}$:
\[
\|A\|_{sup}=\max_{i,j=1,\dots,k}|a_{ij}|.
\]

\begin{lem}\label{lem8}
\[
\max_{i=1,\dots,k}|A\bei|_1\geq \|A\|_{sup}.
\]
\end{lem}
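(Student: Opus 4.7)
The plan is to unpack the definitions on both sides and then apply, column by column, the pointwise domination $|v|_\infty \le |v|_1$ on $\C^k$. In the basis $\{\bar e_i\}_{i=1}^k$ the matrix $(a_{ij})$ represents $A$ via $A\bar e_j = \sum_{i=1}^k a_{ij}\bar e_i$, so the vector $A\bar e_j$ is precisely the $j$-th column of the matrix. By definition of the $\ell_1$-norm,
\[
|A\bar e_j|_1 = \sum_{i=1}^k |a_{ij}|,
\]
and this sum of nonnegative numbers dominates every individual summand, so $|A\bar e_j|_1 \ge |a_{ij}|$ for each index $i \in \{1,\dots,k\}$.

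Taking the maximum first over $i$ and then over $j$ now yields
\[
\max_{j=1,\dots,k}|A\bar e_j|_1 \;\ge\; \max_{j=1,\dots,k}\max_{i=1,\dots,k}|a_{ij}| \;=\; \max_{i,j=1,\dots,k}|a_{ij}| \;=\; \|A\|_{sup},
\]
which is exactly the required inequality (after relabeling the outer index back to $i$ as in the statement). There is no genuine obstacle here: the content of the lemma is just the standard inequality $|v|_\infty \le |v|_1$ on $\C^k$, packaged in matrix-norm form by applying it to each column of $A$. I expect this estimate to be used in tandem with Lemma~\ref{lem7} to bound $\gr_{\psi^{ab}}$, and hence $\gr_\psi$, from below in terms of the entries of the matrix of the abelianized automorphism.
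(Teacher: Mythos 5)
Your proof is correct and follows essentially the same route as the paper's: the paper writes $\max_i|A\bar e_i|_1\geq\max_i|A\bar e_i|_\infty=\max_{i,j}|a_{ij}|=\|A\|_{sup}$, and your column-by-column argument is just this same chain with the $|v|_1\geq|v|_\infty$ step unpacked coordinatewise. Nothing is missing.
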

\begin{proof}
\[
\max_i|A\bei|_1\geq\max_i|A\bei|_\infty=\max_{i,j}|a_{ij}|=\|A\|_{sup}.\qedhere
\]
\end{proof}

The following fact is well-known (see~\cite[Cor.~5.4.5]{HJ}):
\begin{lem}\label{lem9}
There exist constants $C_1$, $C_2>0$ such that
\[
\pushQED{\qed}
C_1\|A\|_{op}\leq\|A\|_{sup}\leq C_2\|A\|_{op}.\qedhere
\popQED
\]
\end{lem}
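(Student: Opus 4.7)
The plan is to give two short direct arguments that yield explicit constants, exploiting the standard basis $\{\bei\}$ of $\C^k$; alternatively, as cited from Horn--Johnson, one can appeal to the general fact that any two norms on the finite-dimensional vector space $\End(\C^k)\cong\C^{k^2}$ are equivalent. Either route establishes the claim, but the explicit route is transparent and requires no outside input, so I would favour it.

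For the upper estimate $\|A\|_{sup}\le\|A\|_{op}$ (which gives $C_2=1$), I would evaluate $A$ on each standard basis vector $\bei$. Since $|\bei|_\infty=1$, the definition of the operator norm yields $|A\bei|_\infty\le\|A\|_{op}$. But $A\bei$ is the $i$-th column of the matrix $(a_{ij})$, whose $j$-th coordinate is $a_{ji}$; therefore $|a_{ji}|\le\|A\|_{op}$ for every pair $i,j$, and maximising gives $\|A\|_{sup}\le\|A\|_{op}$.

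For the lower estimate $\|A\|_{op}\le k\|A\|_{sup}$ (which gives $C_1=1/k$), I would take an arbitrary $v\in\C^k$ with $|v|_\infty=1$ and expand $v=\sum_i c_i\bei$ with $|c_i|\le1$. The $j$-th coordinate of $Av$ equals $\sum_i a_{ji}c_i$, whose modulus is bounded by $\sum_i|a_{ji}||c_i|\le k\|A\|_{sup}$ via the triangle inequality. Thus $|Av|_\infty\le k\|A\|_{sup}$, and passing to the supremum over $v$ gives $\|A\|_{op}\le k\|A\|_{sup}$.

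Since this is entirely standard linear algebra, there is no genuine obstacle; the only real decision is the stylistic one of whether to display explicit constants $C_1=1/k$, $C_2=1$ or simply invoke the abstract equivalence-of-norms theorem on $\End(\C^k)$.
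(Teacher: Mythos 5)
Your proof is correct, and it is more explicit than what the paper does: the paper treats Lemma~\ref{lem9} as a well-known fact and simply cites Horn--Johnson (Cor.~5.4.5), which is the general equivalence-of-norms theorem on a finite-dimensional space. Your direct computation gives the same conclusion with concrete constants $C_2=1$ and $C_1=1/k$, which the abstract citation does not supply. Both steps check out: evaluating $A$ on each $\bei$ shows $\max_j|a_{ji}|=|A\bei|_\infty\le\|A\|_{op}$, hence $\|A\|_{sup}\le\|A\|_{op}$; and for $|v|_\infty=1$ the triangle inequality on $(Av)_j=\sum_i a_{ji}c_i$ gives $|Av|_\infty\le k\|A\|_{sup}$, hence $\|A\|_{op}\le k\|A\|_{sup}$. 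So the only difference from the paper is that you prove the estimate rather than invoke it; since the subsequent use of the lemma (Corollary~\ref{cor3}) only needs the existence of $C_1,C_2$, either route suffices, but yours is self-contained.
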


\begin{cor}\label{cor3}
The growth function
\[
\gr_A^{sup}\colon n\longmapsto \|A^n\|_{sup}
\]
is $\sim$ equivalent to the growth function
\[
\pushQED{\qed}
\gr_A^{op}\colon n\longmapsto \|A^n\|_{op}.\qedhere
\popQED
\]
\end{cor}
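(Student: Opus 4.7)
The plan is to deduce the corollary directly from Lemma~\ref{lem9} by applying it pointwise to each iterate $A^n$. Since Lemma~\ref{lem9} asserts the equivalence of the two norms $\|\cdot\|_{op}$ and $\|\cdot\|_{sup}$ on the finite-dimensional space $\End(\C^k)$ with constants $C_1,C_2>0$ that depend only on $k$ (not on the specific operator), I would substitute $A^n$ for $A$ to obtain
\[
C_1\|A^n\|_{op}\le \|A^n\|_{sup}\le C_2\|A^n\|_{op}\qquad\text{for every }n\in\N.
\]

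From the right-hand inequality I read off $\gr_A^{sup}(n)\le C_2\, \gr_A^{op}(n)$, which by definition of $\pr$ yields $\gr_A^{sup}\pr \gr_A^{op}$. From the left-hand inequality I get $\gr_A^{op}(n)\le C_1^{-1}\, \gr_A^{sup}(n)$, giving $\gr_A^{op}\pr \gr_A^{sup}$. Combining the two relations is, by definition, exactly the assertion $\gr_A^{sup}\sim \gr_A^{op}$.

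There is no real obstacle here: the statement is a formal consequence of the fact that the two norms on $\End(\C^k)$ are equivalent, and the equivalence constants are uniform in the operator. The only thing to be careful about is that the equivalence $\sim$ requires multiplicative comparison (with an additive error allowed), and both bounds above are purely multiplicative, so the conclusion follows with additive constant zero.
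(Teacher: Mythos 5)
Your proof is correct and is essentially the argument the paper intends: the corollary is stated with no separate proof because it follows immediately by applying Lemma~\ref{lem9} to each $A^n$ and reading off the two $\pr$ relations, exactly as you do.
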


The following results are proved in~\cite[Proof of Th.\,2.1]{BG}:
\begin{lem}\label{lem10}
\pushQED{\qed}
The $\sim$ equivalence class of the function $\gr_A^{op}$ depends only on the conjugacy class of $A$ in $GL(k,\C)$.\qedhere
\popQED
\end{lem}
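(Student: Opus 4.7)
The plan is to exploit the elementary fact that any operator norm is submultiplicative, together with the observation that conjugation by a fixed matrix changes norms only by bounded multiplicative constants.

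First I would assume that $A$ and $B$ are conjugate in $GL(k,\C)$, so that there exists $P\in GL(k,\C)$ with $B=PAP^{-1}$. Then for every $n\in\N$ one has $B^n=PA^nP^{-1}$, and submultiplicativity of the operator norm $\|\cdot\|_{op}$ (which holds for any norm on $\End(\C^k)$ induced by a vector norm, in particular the one induced by $|\cdot|_\infty$) gives
\[
\|B^n\|_{op}\le \|P\|_{op}\,\|A^n\|_{op}\,\|P^{-1}\|_{op}.
\]
Setting $C:=\|P\|_{op}\,\|P^{-1}\|_{op}$, which is a positive constant independent of $n$, this reads $\gr_B^{op}(n)\le C\,\gr_A^{op}(n)$, so $\gr_B^{op}\pr \gr_A^{op}$.

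By complete symmetry, using $A=P^{-1}BP$ in place of $B=PAP^{-1}$, the same argument yields $\gr_A^{op}\pr\gr_B^{op}$ with the constant $\|P^{-1}\|_{op}\,\|P\|_{op}=C$. Combining the two inequalities gives $\gr_A^{op}\sim \gr_B^{op}$, as required.

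There is really no obstacle here: the argument is a one-line consequence of submultiplicativity and the fact that $\pr$ allows multiplicative constants. The only point worth mentioning is that submultiplicativity applies to $\|\cdot\|_{op}$ as defined in the excerpt (it is the operator norm for the $\ell_\infty$ norm on $\C^k$), so no appeal to Corollary~\ref{cor3} or Lemma~\ref{lem9} is needed for the proof itself — although one could equivalently have carried out the same computation with $\|\cdot\|_{sup}$ after first translating via that corollary.
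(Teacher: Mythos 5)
Your argument is correct: from $B=PAP^{-1}$ one gets $B^n=PA^nP^{-1}$, and submultiplicativity of the $\ell_\infty$-operator norm gives $\|B^n\|_{op}\le \|P\|_{op}\|P^{-1}\|_{op}\|A^n\|_{op}$, which together with the symmetric estimate yields $\gr_A^{op}\sim\gr_B^{op}$ since the relation $\preceq$ permits multiplicative constants. The paper does not write out a proof but cites Bridson--Gersten for this fact, and the argument implicit there is exactly this conjugation-plus-submultiplicativity computation, so your proposal fills in the intended standard proof.
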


In view of Corollary~\ref{cor3} and Lemma~\ref{lem10}, we need only to consider the growth of the Jordan normal forms of matrices $A$.

\begin{lem}[{\cite[Th.\,2.1]{BG}}]\label{lem11}\pushQED{\qed}
Suppose that $J$ is a matrix in the Jordan normal form with all eigenvalues equal to $1$. Then $\gr_J^{sup}(n)\sim n^{c-1}$, where $c$ is the maximal size of Jordan blocks of $J$.\qedhere
\popQED
\end{lem}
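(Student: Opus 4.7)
The plan is to reduce the problem to a single Jordan block and then to compute entries of its $n$-th power via a binomial expansion.

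First I would decompose $J=\bigoplus_{i} J_{c_i}$ into Jordan blocks $J_{c_i}$ of size $c_i$ with eigenvalue $1$, ordered so that $c=\max_i c_i$. Because $J^n$ is block-diagonal with blocks $J_{c_i}^n$, we have $\|J^n\|_{sup}=\max_i \|J_{c_i}^n\|_{sup}$, so it suffices to estimate $\|J_s^n\|_{sup}$ for a single block of size $s$ and then take the maximum over $s\le c$.

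For a fixed size $s$, write $J_s=I_s+N_s$, where $N_s$ is the nilpotent matrix with ones on the first superdiagonal and zeros elsewhere. Then $N_s^k$ has ones on the $k$-th superdiagonal (for $0\le k\le s-1$) and is zero for $k\ge s$. Since $I_s$ and $N_s$ commute, the binomial theorem gives
\[
J_s^n=(I_s+N_s)^n=\sum_{k=0}^{s-1}\binom{n}{k}N_s^k
\]
for all $n\in\N$, where we set $\binom{n}{k}=0$ if $k>n$. The entry of $J_s^n$ in position $(i,j)$ is $\binom{n}{j-i}$ for $j\ge i$ and $0$ otherwise, so
\[
\|J_s^n\|_{sup}=\max_{0\le k\le s-1}\binom{n}{k}=\binom{n}{s-1}
\]
for $n$ sufficiently large (namely $n\ge 2(s-1)$). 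Since $\binom{n}{s-1}$ is a polynomial of degree $s-1$ in $n$ with positive leading coefficient, we conclude $\|J_s^n\|_{sup}\sim n^{s-1}$.

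Taking the maximum over the block sizes $c_i\le c$, the dominant term is the one coming from a block of maximal size $c$, yielding $\|J^n\|_{sup}\sim n^{c-1}$. The argument is completely elementary; the only thing to watch is to handle small values of $n$ in the equivalence $\sim$, which is built in by the additive constant in the definition of $\pr$.
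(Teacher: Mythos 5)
Your argument is correct, and it is worth noting that the paper itself offers no proof of this lemma: it is quoted directly from Bridson--Gersten \cite[Th.\,2.1]{BG} and marked with a \qed. So there is nothing internal to compare against; what you have written is essentially the standard self-contained verification of the cited fact. The key steps all check out: $\|\cdot\|_{sup}$ of a block-diagonal matrix is the maximum of the block norms, since it is just the largest absolute value of an entry; writing $J_s=I_s+N_s$ with $N_s$ nilpotent and commuting with $I_s$ gives $J_s^n=\sum_{k=0}^{s-1}\binom{n}{k}N_s^k$, so the $(i,j)$ entry is $\binom{n}{j-i}$; and the maximum $\max_{0\le k\le s-1}\binom{n}{k}$ equals $\binom{n}{s-1}$ once $n\ge 2(s-1)$ (the ratio $\binom{n}{k+1}/\binom{n}{k}=(n-k)/(k+1)\ge 1$ for all $k\le s-2$ in that range), which grows like $n^{s-1}$. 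Your final maximization over blocks is even cleaner than you state: since the set of binomial coefficients appearing for a block of size $c_i$ is contained in that for the largest block, one has exactly $\|J^n\|_{sup}=\max_{0\le k\le c-1}\binom{n}{k}$ for all $n$, and the small-$n$ discrepancy (where $\binom{n}{c-1}$ vanishes) is absorbed by the additive constant in the definition of $\pr$, as you observe. The only cosmetic caveat is that the lemma as stated allows any Jordan form with all eigenvalues $1$, which is precisely the case your computation covers, so the proof is complete.
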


Combining all of the above, we get:
\begin{cor}\label{cor4}
Let $\psi$ be an automorphism of a free group $F$. 
If the abelianization ${\psi^{ab}}$ has all eigenvalues equal to $1$, and $c$ is the size of the largest Jordan block in the Jordan normal form $J$ for ${\psi^{ab}}$, then
\[
\gr_\psi(n)\su n^{c-1}\text{\quad and\quad}\gr_{{\psi^{ab}}}(n)\su n^{c-1}.
\]
\end{cor}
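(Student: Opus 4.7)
The plan is to chain together Lemma~\ref{lem7}, Lemma~\ref{lem8}, Lemma~\ref{lem10}, Lemma~\ref{lem11} and Corollary~\ref{cor3}. First I observe that by Lemma~\ref{lem7} the inequality $\gr_\psi(n)\su n^{c-1}$ is an immediate consequence of $\gr_{\psi^{ab}}(n)\su n^{c-1}$, so the whole corollary reduces to establishing a polynomial lower bound on the abelianized growth. This means I only have to analyze a single linear operator on $\C^k$ and its Jordan form.

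To get that lower bound, I would start from the definition $\gr_{\psi^{ab}}(n)=\max_i|(\psi^{ab})^n(\bei)|_1$. Applying Lemma~\ref{lem8} to the matrix $A=(\psi^{ab})^n$ gives
\[
\gr_{\psi^{ab}}(n)\ge \|(\psi^{ab})^n\|_{sup}=\gr_{\psi^{ab}}^{sup}(n).
\]
Corollary~\ref{cor3} then shows $\gr_{\psi^{ab}}^{sup}\sim\gr_{\psi^{ab}}^{op}$, and Lemma~\ref{lem10} says the $\sim$-class of $\gr_{\psi^{ab}}^{op}$ depends only on the conjugacy class of $\psi^{ab}$ in $GL(k,\C)$. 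Passing to the Jordan normal form $J$ and then invoking Corollary~\ref{cor3} once more in the opposite direction, I obtain $\gr_{\psi^{ab}}^{sup}\sim\gr_J^{sup}$.

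By hypothesis all eigenvalues of $\psi^{ab}$ are $1$ and the largest Jordan block of $J$ has size $c$, so Lemma~\ref{lem11} yields $\gr_J^{sup}(n)\sim n^{c-1}$. Combining all of the above,
\[
n^{c-1}\sim \gr_J^{sup}(n)\sim\gr_{\psi^{ab}}^{sup}(n)\le \gr_{\psi^{ab}}(n),
\]
which is the bound $\gr_{\psi^{ab}}(n)\su n^{c-1}$. Feeding this into Lemma~\ref{lem7} gives $\gr_\psi(n)\su n^{c-1}$ as well.

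The steps are essentially routine bookkeeping; there is no substantive obstacle, only the need to manage the $\sim$ equivalences correctly. What starts as a lower bound in the $\ell_1$-norm on vectors gets converted first to a bound in the $\sup$ norm on matrices (Lemma~\ref{lem8}), then to the operator norm in order to use conjugation invariance (Corollary~\ref{cor3} and Lemma~\ref{lem10}), and finally back to the $\sup$ norm where the polynomial rate $n^{c-1}$ is read off directly from Lemma~\ref{lem11}.
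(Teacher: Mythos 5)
Your proof is correct and follows essentially the same chain as the paper: Lemma~\ref{lem7} to pass from $\gr_\psi$ to $\gr_{\psi^{ab}}$, Lemma~\ref{lem8} to reach the $\sup$ norm, equivalence of $\sup$ and operator norms, conjugation invariance from Lemma~\ref{lem10}, and the Jordan block computation from Lemma~\ref{lem11}. The only cosmetic difference is that you invoke Corollary~\ref{cor3} twice where the paper uses Lemma~\ref{lem9} directly and leaves the final $\mathrm{op}$-to-$\sup$ conversion implicit, but these are the same fact.
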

\begin{proof}
Indeed,
\begin{align*}
\gr_\psi(n)& \geq 
									\gr_{{\psi^{ab}}}(n) \quad & & \text{(by Lemma~\ref{lem7})}\\
    & \geq \|(\psi^{ab})^n\|_{sup} \quad & & \text{(by Lemma~\ref{lem8})}\\
    & \geq C_1\|(\psi^{ab})^n\|_{op} \quad & & \text{(for some $C_1>0$, by Lemma~\ref{lem9})}\\
    & \sim \|J^n\|_{op} \quad & & \text{(by Lemma~\ref{lem10})}\\
    & \sim n^{c-1}\quad & & \text{(by Lemma~\ref{lem11})}.\qedhere
\end{align*}
\end{proof}

\section{Bounding the Dehn function of the Bieri double\label{sec:dehn}}

In this section we outline what is known about the upper and the lower bounds for the Dehn function of a Bieri double group. The lower bound was established in~\cite[Lemma 1.5]{BriPoly} (see Proposition~\ref{prop:lower} below). The argument for the upper bound (see Proposition~\ref{prop:upper} below) follows the outline of~\cite[Theorem 5.1]{BriPit}. In the latter paper the argument is given in the setting of abelian-by-cyclic groups; we adapt this reasoning to the free-by-free setting.

\begin{defi} (Bieri double)
Let $G$ be a free-by-cyclic group $G=F\rtimes_\psi\Z$. The {\it Bieri double} of $G$ is the group $\Gamma(G)=G\Asterisk_FG$.
\end{defi}
If $G\cong\la\A,t\mid tat^{-1}=\psi(a)\text{ for all }a\in\A\ra$ then $\Gamma(G)\cong\la\A,s,t\mid sas^{-1}=\psi(a),\,tat^{-1}=\psi(a)\text{ for all }a\in\A\ra$. If one denotes $F(s,t)$ the free group on the generating set $\{s,t\}$ and $(\psi)\colon F(s,t)\to\Aut(F(\A))$ the homomorphism given on the generators by $s\mapsto\psi$, $t\mapsto\psi$, then $\Gamma(G)\cong F(\A)\rtimes_{(\psi)}F(s,t)$.

\begin{defi} (Dehn function)
Let a group $\Gamma$ be given by a finite presentation $P=\la\A\mid R \ra$. For each word $w$ lying in the normal closure of $R$ in the free group $F(\A)$, define 
\[
\Area(w):=\min\big\{N\,\left|\right.\, w\underset{\scriptscriptstyle{F(\A)}}{=}\prod_{i=1}^N x_i^{-1}r_ix_i\text{ with }x_i\in F(\A), r_i\in R^\pm\big\}.
\]
The \emph{Dehn function} of $P$ is the function $\delta_P\colon\N\to\N$ defined by
\[
\delta_P(n):=\max\{\Area(w)\mid w\underset{\scriptscriptstyle\Gamma}{=}1, \|w\|_\A\le n\}.
\]
where $\|w\|_\A$ denotes the length of the word $w$ in generators $\A^\pm$.
\end{defi}

Viewed up to $\simeq$ equivalence, the Dehn functions are independent of the choice of the presentation (see~\cite[1.3.3]{BriChap}), so we denote $\delta_P(n)$ as $\delta_\Gamma(n)$.

\begin{prop}[{\cite[Lemma 1.5]{BriPoly} and \cite[Proposition~7.2.2]{BriChap}}]\label{prop:lower}
Let $\psi$ be an automorphism of $F$ and $\|.\|$ denote the word length with respect to a fixed generating set of $F$. Then for the Dehn function $\delta_\Gamma(n)$ of the Bieri double $\Gamma$ of $F\rtimes_\psi\Z$ one has 
\[
\pushQED{\qed}
n\cdot\max_{\substack{\|b\|\leq n\\ b\in F}}\|\psi^n(b)\|\prq \delta_\Gamma(n).\qedhere
\popQED
\]
\end{prop}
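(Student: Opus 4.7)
The plan is to exhibit, for each $n\in\N$ and each $b\in F$ with $\|b\|\le n$, a nullhomotopic word
\[
w_n(b):=t^nbt^{-n}\cdot s^nb^{-1}s^{-n}
\]
in $\Gamma$, of length $4n+2\|b\|\le 6n$, and to lower-bound its area by a constant multiple of $n\cdot\|\psi^n(b)\|$. Nullhomotopy is immediate from the identities $tat^{-1}=sas^{-1}=\psi(a)$, which force $t^nbt^{-n}=\psi^n(b)$ and $s^nb^{-1}s^{-n}=\psi^n(b)^{-1}$. Taking the supremum over such $b$ and invoking the definition of the Dehn function then converts this area estimate into the stated $\prq$-inequality.

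The area estimate follows from a corridor analysis. Any van Kampen diagram $D$ over the presentation of $\Gamma$ with boundary $w_n(b)$ splits its 2-cells into $t$-corridors and $s$-corridors, since the defining relators are either of the form $tat^{-1}\psi(a)^{-1}$ or $sas^{-1}\psi(a)^{-1}$ and never mix $s$ with $t$. The boundary word contains exactly $n$ edges of each of $t^{\pm1}$ and $s^{\pm1}$, so planarity forces $D$ to contain precisely $n$ nested $t$-corridors linking the $t^n$-arc of the boundary to the $t^{-n}$-arc, and analogously $n$ nested $s$-corridors on the opposite side of the diagram. The two families are disjoint, and the subdiagram lying between them carries only letters from $\A$, so it is a subdiagram over the free group $F$ with trivial boundary word, hence contains no 2-cells.

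The outer side of the outermost $t$-corridor is the $b$-edge (of length $\|b\|$); crossing each successive $t$-corridor applies $\psi$ to the label of one side, so the inner side of the $i$-th $t$-corridor is a word representing $\psi^i(b)$ in $F$, hence of combinatorial length at least $\|\psi^i(b)\|$. Since the area of a corridor is, up to the bounded factor $K:=\max_{a\in\A}\|\psi(a)\|$, proportional to the length of such a side, one obtains
\[
\Area(D)\,\suq\,\sum_{i=0}^{n-1}\|\psi^i(b)\|,
\]
with an analogous contribution from the $s$-corridors. Because $\|\psi^i(b)\|$ grows at most geometrically with ratio $K$, the sum satisfies $\sum_{i=0}^{n-1}\|\psi^i(b)\|\suq n\cdot\|\psi^n(b)\|$ up to $\simeq$-equivalence (the rescaling constant $B$ in the definition of $\prq$ absorbs the discrepancy between a geometric sum and its last term), which completes the lower bound.

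The principal obstacle is the corridor bookkeeping: tracking the combinatorial lengths of the sides of each of the $n$ nested $t$-corridors without assuming that $D$ is reduced, and verifying rigorously that the central subdiagram is a diagram over $F$ and thus carries no 2-cells. Once these diagrammatic facts are established, the remainder is routine estimation of the word lengths $\|\psi^i(b)\|$.
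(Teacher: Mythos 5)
The choice of word $w_n(b)=t^nbt^{-n}\,s^nb^{-1}s^{-n}$ does not yield the stated lower bound. Your own corridor analysis shows that $\Area(w_n(b))$ is, up to the multiplicative constant $K=\max_{a\in\A}\|\psi(a)\|$, comparable to $\sum_{i=0}^{n-1}\|\psi^i(b)\|$, and this is \emph{tight}: the obvious telescoping van Kampen diagram (push $b$ through $n$ nested $t$--corridors to $\psi^n(b)$, and push $b^{-1}$ through $n$ nested $s$--corridors to $\psi^n(b)^{-1}$, then glue) has area exactly of that order. The asserted inequality $\sum_{i=0}^{n-1}\|\psi^i(b)\|\suq n\|\psi^n(b)\|$ is false in general. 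When $\|\psi^i(b)\|$ really does grow geometrically like $K^i$ (the very hypothesis you invoke), the sum is $\asymp\|\psi^n(b)\|$, which is smaller than $n\|\psi^n(b)\|$ by a factor of $n$; rescaling the argument inside $\prq$ cannot absorb a multiplicative discrepancy that grows with $n$. Your estimate does recover the proposition in the polynomially growing case (if $\|\psi^i(b)\|\asymp i^d\|b\|$ then $\sum_{i<n}\|\psi^i(b)\|\asymp n^{d+1}\|b\|\asymp n\|\psi^n(b)\|$), but the proposition is stated for an arbitrary automorphism $\psi$, so this is a genuine gap.

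The paper does not reprove this statement; it cites~\cite[Lemma 1.5]{BriPoly}, and the Remark after the proof of Theorem~B records that the witnessing words there are commutators $w_n=[(st^{-1})^n,\,t^nbt^{-n}]$ with $\|b\|\le n$. The structural difference from your word is the crux. In the commutator, each of the $n$ $s$--corridors separates the $t^nbt^{-n}$--arc from the rest of the diagram, and the portion of the boundary it cuts off reads $t^{-1}(st^{-1})^{j}\,t^nbt^{-n}\,(ts^{-1})^{j}t$; since conjugation of $\psi^n(b)$ by $st^{-1}$ is trivial in $\Gamma$, this collapses to $\psi^{n-1}(b)$, so the $\A$--label on one side of \emph{each} $s$--corridor represents $\psi^{n-1}(b)^{\pm 1}$ and has combinatorial length $\ge\|\psi^{n-1}(b)\|$. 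Hence $\Area\ge n\|\psi^{n-1}(b)\|/K\ge n\|\psi^n(b)\|/K^2$. In your $w_n(b)$, the $n$ corridors instead carry the progressively shorter words $\psi^{n-1}(b),\ldots,\psi(b),b$, which is exactly where the factor of $n$ is lost. To fix the proof, replace $w_n(b)$ by Bridson's commutator and redo the corridor bookkeeping along the lines above.
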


\begin{prop}\label{prop:upper}
Let $\psi$ be an automorphism of a free group $F$ and assume that $\gr_\psi(n)\pr n^d$ and $\gr_{\psi^{-1}}(n)\pr n^d$. Then for the Dehn function $\delta_\Gamma(n)$ of the Bieri double $\Gamma=\Gamma(F\rtimes_\psi\Z)$ one has $\delta_\Gamma(n)\prq n^{d+2}$.
\end{prop}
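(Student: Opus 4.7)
The plan is to mimic the abelian-by-cyclic argument of~\cite[Theorem~5.1]{BriPit} with the abelian factors replaced by free ones, exploiting the semidirect product decomposition $\Gamma=F(\A)\rtimes_{(\psi)}F(s,t)$. Given a null-homotopic word $w$ over $\A\cup\{s,t\}$ of length at most $n$, I would fill it by first shuffling it into the normal form $UV$ with $U$ a word in $\A^{\pm1}$ and $V$ a word in $\{s,t\}^{\pm1}$, using only the defining commutations $ya=\psi^{\epsilon(y)}(a)\,y$ (one relator per commutation, with $\epsilon(y)=\pm1$ the sign of $y$), and then reducing $U$ and $V$ freely inside the two free factors.

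The schedule I have in mind is to sweep through $w$ from left to right and, when the $j$-th letter $y_j\in\{s,t\}^{\pm 1}$ is encountered, push it all the way to the right of the current word by iterated commutation. At the moment $y_j$ begins its push, each $\A$-letter currently to its right has been replaced by $\psi^{\sigma}(a)$ for some $\sigma\in\Z$ with $|\sigma|\le n$, since $|\sigma|$ is bounded by the number of $\{s,t\}$-letters of $w$ already processed. Using the hypotheses $\gr_\psi(n)\pr n^d$ and $\gr_{\psi^{-1}}(n)\pr n^d$, each such modified $\A$-letter is a word of length $\pr n^d$, and pushing $y_j$ past it costs one commutation relator per letter, hence $\pr n^d$ relators. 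Since $y_j$ crosses at most $n$ modified $\A$-letters, processing $y_j$ costs $\pr n^{d+1}$ relators; summed over the at-most-$n$ such letters $y_j$ in $w$, the entire shuffle has total area $\pr n^{d+2}$.

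After the shuffle the partial diagram has boundary $w$ on one side and $UV$ on the other. Since $w=_\Gamma 1$ and $UV$ is already in normal form, uniqueness of the semidirect product factorization forces $U=_{F(\A)}1$ and $V=_{F(s,t)}1$; as both factors are free, $U$ and $V$ free-reduce to the empty word without any $2$-cells being added. Assembling the individual commutations into a rectangular/corridor-type van Kampen diagram (one vertical corridor per $\{s,t\}$-letter of $w$) then yields a filling of $w$ of area $\pr n^{d+2}$, giving the desired bound $\delta_\Gamma(n)\prq n^{d+2}$.

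The one delicate ingredient---and the reason for the two-sided hypothesis on $\psi$ and $\psi^{-1}$---is the uniform control on the length of the modified $\A$-letters at every stage of the shuffle: inverse $\{s,t\}$-letters in $w$ require iterating $\psi^{-1}$ as well as $\psi$, and both iterates must stay of length $\pr n^d$. I expect this bookkeeping, together with a careful description of the corridor diagram, to be the only real technical obstacle; the rest of the argument is essentially a transcription of the Bridson--Pittet strategy into the free-by-free setting, where the base group $F(s,t)$ being free (instead of $\Z^k$) actually simplifies the final step, eliminating any additional quadratic filling cost in the base.
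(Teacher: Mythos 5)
Your argument is correct in its essentials but proceeds by a genuinely different route from the paper, which never builds a diagram at all: it proves that $\Gamma=F(\A)\rtimes_{(\psi)}F(s,t)$ is asynchronously combable with bounded width via Theorem~\ref{th:1}, computes the length $L(n)\le nP(n)+n$ of the concatenated combing $\sigma^{F(s,t)}\sigma^{F(\A)}$ in Proposition~\ref{prop-comb}, and then invokes the Bridson--Pittet criterion (Proposition~\ref{propBriPit}) to get $\delta_\Gamma\prq nL(n)\prq n^{d+2}$. The algebraic shuffle you describe is exactly the computation hidden inside Proposition~\ref{prop-comb}, namely rewriting $u_1w_1\cdots u_rw_r$ as the normal form $\big(\prod_i\psi^{-\sum_{j>i}\eps(u_j)}(w_i)\big)\,u_1\cdots u_r$; your version turns it into an explicit corridor van Kampen diagram and counts its $2$--cells instead of invoking the combing lemma. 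This is more elementary and self-contained, and both routes need the two-sided polynomial hypothesis on $\psi$ and $\psi^{-1}$ for exactly the reason you identify.

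The one point that needs repair is the schedule itself. Since $F(s,t)$ is free, not abelian, $y_j$ cannot commute past $y_{j+1},\dots,y_r$, so a single left-to-right sweep in which each $y_j$ is pushed \emph{all the way to the right} is not an executable sequence of relator applications; moreover, once $y_j$ crosses $w_j$, the newly created block $\psi^{\eps(y_j)}(w_j)$ sits between $y_{j-1}$ and $y_j$, so the earlier $\{s,t\}$--letters must traverse it again before the $F(s,t)$--part is contiguous. The cure is to transport each $\A$--block instead: push $w_j$ leftward across $y_j,\dots,y_1$ in that order (or, equivalently, maintain a running prefix of $w$ in normal form), at cost $\sum_{i=0}^{j-1}P(i)\,|w_j|\le nP(n)|w_j|$ per block; summing over $j$ gives $\le nP(n)\sum_j|w_j|\pr n^2P(n)\pr n^{d+2}$, matching your estimate. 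With that corrected bookkeeping the uniqueness-of-factorization step and the conclusion $\delta_\Gamma(n)\prq n^{d+2}$ go through as you say.
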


\begin{rem}
It can be proved using train-tracks that $\gr_{\psi^{-1}}(n)\sim n^d$ if and only if $\gr_{\psi}(n)\sim n^d$ (see e.g.~\cite[Th.~0.4]{Pig}). However, for the reader who is unfamiliar with the train-track machinery we make the exposition independent of this result. Instead, in what follows we will apply Proposition~\ref{prop:upper} to the automorphisms $\phi$ whose growth functions $\gr_{\phi}(n)$ and $\gr_{\phi^{-1}}(n)$ are computed in section~\ref{sect:growth} and are shown to be $\sim$ equivalent to each other.
\end{rem}

In order to prove Proposition~\ref{prop:upper}, we need some preliminary results on combings of groups. We start with some definitions from~\cite{BriGFA} and \cite{BriPit}.

Let $\Gamma$ be a group with finite generating set $\A$ and $d_\A(x,y)$ be the associated word metric.
\begin{defi}
A \emph{combing} (normal form) for $\Gamma$ is a set of words $\{\sigma_g\mid g\in\Gamma\}$ in the letters $\A^\pm$ such that $\sigma_g=g$ in $\Gamma$. We denote by $|\sigma_g|$ or $|\sigma_g|_\A$ the length of the word $\sigma_g$ in the free monoid on $\A^\pm$.
\end{defi}

\begin{defi}
Let
\[
\mathcal R=\{\rho\colon\N\to\N\mid\rho(0)=0;\,\rho(n+1)\in\{\rho(n),\rho(n)+1\}\,\forall n;\,\rho\text{ unbounded\,}\}.
\]
Given eventually constant paths $p_1,p_2\colon\N\to(\Gamma,d)$ we define
\[
D(p_1,p_2)=\min_{\rho,\rho'\in \mathcal R}\big\{\max_{t\in\N}\{d_\A(p_1(\rho(t)),p_2(\rho'(t))\}\big\}.
\]
\end{defi}
\begin{defi}
Given a combing $\sigma$ for $\Gamma$, the \emph{asynchronous width} of $\sigma$ is the function $\Phi_\sigma\colon \N\to\N$ defined by
\[
\Phi_\sigma(n)=\max\big\{\,D(\sigma_g,\sigma_h)\mid d_\A(1,g),d_\A(1,h)\leq n;\,d_\A(g,h)=1\,\big\}.
\]
\end{defi}
\begin{defi}
A finitely generated group $\Gamma$ is said to be \emph{asynchronously combable} if there exists a combing $\sigma$ for $\Gamma$ and a constant $K>0$ such that $\Phi_\sigma(n)\leq K$ for all $n\in\N$.
\end{defi}
\begin{defi}
The \emph{length} of a combing $\sigma$ for $\Gamma$ is the function $L\colon\N\to\N$ given by:
\[
L(n)=\max\big\{\,|\sigma_g|\mid d_\A(1,g)\leq n\,\big\}.
\]
\end{defi}

The relation of combings to Dehn functions is manifested in the following result:
\begin{prop}[{\cite[Lemma 4.1]{BriPit}}]\label{propBriPit}
Let $\Gamma$ be a group with a finite set of semigroup generators $\A^\pm$. 
If there exists a combing $\sigma$ for $\Gamma$ whose asynchronous width is bounded by a constant and whose length is bounded by the function $L(n)$, then the Dehn function $\delta_\Gamma(n)$ for any presentation of $\Gamma$ satisfies $\delta_\Gamma(n)\prq nL(n)$.\qed
\end{prop}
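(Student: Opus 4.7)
The plan is to fill any null-homotopic word $w$ of length $\leq n$ by using the combing $\sigma$ as a one-dimensional scaffold and exploiting the bounded asynchronous width to tile the resulting diagram with cells of uniformly bounded perimeter. First, I would fix a finite presentation of $\Gamma$ on $\A$; by invariance of the Dehn function under changes of presentation this does not affect the $\simeq$-class of $\delta_\Gamma$. Write $w = a_1 \cdots a_n$ with $a_i \in \A^\pm$ and let $g_i := a_1 \cdots a_i$ in $\Gamma$, so that $g_0 = g_n = 1$ and $d_\A(1, g_i) \leq n$ for every $i$. After normalizing $\sigma_1$ to the empty word (which affects the width function only on a finite set and the length function not at all, so costs at most an additive constant in the final estimate), the triangular loops
\[
\Delta_i := \sigma_{g_{i-1}} \cdot a_i \cdot \sigma_{g_i}^{-1}
\]
freely reduce in $F(\A)$ to $\Delta_1 \cdots \Delta_n = w$, whence $\Area(w) \leq \sum_{i=1}^n \Area(\Delta_i)$. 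The task thereby reduces to proving $\Area(\Delta_i) = O(L(n))$ uniformly in $i$ and $n$.

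Next, I would fill each $\Delta_i$ by a ladder construction. Since $d_\A(g_{i-1}, g_i) \leq 1$ and both points lie in the ball of radius $n$, the width hypothesis yields $\rho, \rho' \in \mathcal{R}$ with
\[
d_\A\bigl(\sigma_{g_{i-1}}(\rho(t)),\ \sigma_{g_i}(\rho'(t))\bigr) \leq K \quad \text{for every } t.
\]
Choose the smallest $T$ with $\rho(T) = |\sigma_{g_{i-1}}|$ and $\rho'(T) = |\sigma_{g_i}|$; since at each tick at least one of $\rho, \rho'$ must advance (else $T$ would not be minimal), $T \leq |\sigma_{g_{i-1}}| + |\sigma_{g_i}| \leq 2L(n)$. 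For each $t \in \{0, \ldots, T\}$, select a rung word $r_t$ of length $\leq K$ realizing the synchronized distance, with $r_0$ the empty word and $r_T$ the single letter $a_i$. Between consecutive rungs sits a cell whose boundary word is $u_t \cdot r_{t+1} \cdot v_t^{-1} \cdot r_t^{-1}$, where $u_t, v_t$ are the at-most-one-letter advances of the two paths; this cell has length $\leq 2K + 2$ and represents $1$ in $\Gamma$. Finiteness of the presentation supplies a uniform constant $C_0$ bounding the area of any null-homotopic word of length $\leq 2K + 2$, so the $T$ cells fill $\Delta_i$ with total area $\leq 2C_0\, L(n)$. Summing over $i$ yields $\Area(w) \leq 2C_0 \, n \, L(n)$, which gives $\delta_\Gamma(n) \prq n L(n)$.

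The main obstacle will be the bookkeeping of the ladder: verifying that the synchronized reparametrizations can indeed be truncated at a finite tick $T \leq 2L(n)$, that the terminal rung can be forced to coincide with the edge $a_i$, and that each elementary cell really is a bounded-perimeter null-homotopic loop even at the initial and terminal ticks where some advances $u_t$ or $v_t$ may be empty. Everything else---the free-group telescoping of the $\Delta_i$ to recover $w$, the passage from bounded perimeter to bounded area via finiteness of the presentation, and the final summation---is routine.
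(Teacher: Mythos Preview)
The paper does not supply its own proof of this proposition; it is quoted verbatim from \cite[Lemma~4.1]{BriPit} and marked with a \qed. Your argument is correct and is essentially the standard proof found in that reference: decompose $w$ into $n$ triangular loops along the combing, then fill each triangle as a ladder whose rungs have length at most $K$ and whose number of rungs is bounded by $2L(n)$.

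One minor remark: the normalization $\sigma_1=\varnothing$ is not strictly necessary. Without it the telescoping product $\Delta_1\cdots\Delta_n$ freely reduces to $\sigma_1\,w\,\sigma_1^{-1}$ rather than to $w$, but $\Area$ is conjugation-invariant in $F(\A)$, so $\Area(w)=\Area(\sigma_1 w\sigma_1^{-1})\le\sum_i\Area(\Delta_i)$ still holds. This sidesteps the (correct but slightly fiddly) check that replacing $\sigma_1$ by the empty word perturbs the width bound only by a constant.
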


In connection to the groups which are Bieri doubles, the following result from~\cite{BriGFA} is useful.
\begin{thm}[{\cite[Theorem B]{BriGFA}}]\label{th:1}
If $G$ is word-hyperbolic and $H$ is asynchronously combable then every split extension 
\[
1\longrightarrow G \longrightarrow G\rtimes H \longrightarrow H \longrightarrow 1
\]
of $G$ by $H$ is asynchronously combable.\qed
\end{thm}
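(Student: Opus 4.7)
The plan is to directly construct an asynchronously combable structure on $\Gamma=G\rtimes H$ from a combing $\sigma^H$ of $H$ with asynchronous width $\le K_H$ and a shortlex combing $\sigma^G$ of $G$, which — since $G$ is hyperbolic and hence biautomatic — enjoys the synchronous two-sided $K_G$--fellow-traveller property. Writing $\phi\colon H\to\Aut(G)$ for the structural action and using the identity $gh=h\cdot\phi_{h^{-1}}(g)$ in $\Gamma$, I would define the combing of $\Gamma$ on the semigroup generating set $\A_G^\pm\cup\A_H^\pm$ by
\[
\sigma_{gh}\ :=\ \sigma^H_h\cdot\sigma^G_{\phi_{h^{-1}}(g)},
\]
which is a word in these generators evaluating to $gh$ in $\Gamma$. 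What remains is to verify that the asynchronous width $\Phi_\sigma$ of $\sigma$ is bounded by a constant.

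For the width estimate I would pick $\gamma_1=g_1h_1$ and $\gamma_2=g_2h_2$ at $\Gamma$-distance $1$ and split into two cases. If $\gamma_2=\gamma_1 a$ for $a\in\A_G^\pm$, a direct computation in the semidirect product gives $h_2=h_1$ and $\phi_{h_1^{-1}}(g_2)=\phi_{h_1^{-1}}(g_1)\cdot a$; thus the $H$-prefixes $\sigma^H_{h_1}$ coincide and the $G$-tails $\sigma^G_u$ and $\sigma^G_{u\cdot a}$ (with $u=\phi_{h_1^{-1}}(g_1)$) differ by a single $\A_G$-letter, so they $K_G$-fellow-travel in $G$ by biautomaticity, and \emph{a fortiori} in $\Gamma$. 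If instead $\gamma_2=\gamma_1 b$ for $b\in\A_H^\pm$, then $g_2=g_1$ and $h_2=h_1b$; the $H$-prefixes $\sigma^H_{h_1},\sigma^H_{h_1b}$ asynchronously $K_H$-fellow-travel in $H$, and this transfers to $\Gamma$ via left-invariance of $d_\Gamma$ and the inequality $d_\Gamma\le d_H$ on $H$.

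The main obstacle is the remaining piece of the second subcase: comparing the $G$-tails $\sigma^G_u$ and $\sigma^G_{\phi_{b^{-1}}(u)}$, since the element $\phi_{b^{-1}}(u)=b^{-1}ub$ may lie arbitrarily far from $u$ in the word metric of $G$. This is precisely where hyperbolicity of $G$ is essential. Because $b$ ranges over the finite set $\A_H^\pm$, each $\phi_{b^{-1}}\in\Aut(G)$ is uniformly $\Lambda$-bi-Lipschitz on the Cayley graph of $G$; hence $\phi_{b^{-1}}\!\circ\!\sigma^G_u$ is a $(\Lambda,0)$--quasi-geodesic in $G$ ending at $\phi_{b^{-1}}(u)$, and by the Morse stability of quasi-geodesics in a $\delta$-hyperbolic space there exist a constant $D=D(\delta,\Lambda)$ and reparameterizations $\rho,\rho'\in\mathcal R$ making the paths $\sigma^G_{\phi_{b^{-1}}(u)}\!\circ\!\rho'$ and $\phi_{b^{-1}}\!\circ\!\sigma^G_u\!\circ\!\rho$ synchronously $D$-close in $G$ at every instant. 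Translating this back to $\Gamma$ through the semidirect-product identity $v_1^{-1}b v_2=v_1^{-1}\phi_b(v_2)\cdot b$, which shows that any element of the form $w\cdot b$ with $w\in G$ has $\Gamma$-length at most $\|w\|_{\A_G}+1$, converts the $G$-closeness into a uniform $\Gamma$-fellow-traveller bound of $D+1$ between the two $G$-tails. Combining the cases gives $\Phi_\sigma(n)\le\max\{K_H,K_G,D+1\}$ for all $n\in\N$, so $\sigma$ is an asynchronous combing with bounded width and $\Gamma$ is asynchronously combable.
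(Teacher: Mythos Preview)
The paper does not give its own proof of this statement: the theorem is quoted verbatim from \cite{BriGFA} and closed with \qed, and the only additional information is the Remark immediately following it, which records that the combing on $G\rtimes H$ may be taken to be the concatenation $\sigma^H\sigma^G$. Your construction is exactly this combing, and your two-case analysis (right-multiply by a $G$-letter versus an $H$-letter) together with the use of Morse stability of quasi-geodesics in the hyperbolic factor $G$ is the same mechanism Bridson uses in \cite{BriGFA}. So there is nothing to compare against in the present paper, and your outline matches the cited source.

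Two small points are worth tightening. First, in Case~2 your final constant is off by a Lipschitz factor: from $d_G(\phi_{b^{-1}}(p),q)\le D$ you get $d_G(p,\phi_b(q))\le \Lambda D$ (apply $\phi_b$), hence $d_\Gamma(p,bq)\le \Lambda D+1$ rather than $D+1$. Second, you should say one sentence about splicing the asynchronous reparametrisations at the junction between the $H$-prefix and the $G$-tail: the $H$-prefixes may have different lengths, so one must let the shorter prefix wait at its endpoint (which is at $H$-distance $\le K_H$, hence $\Gamma$-distance $\le K_H$, from the other) until both paths are ready to enter their $G$-tails, and then switch to the $G$-reparametrisations. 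This is routine but not automatic from the two separate bounds. Neither issue is a genuine gap.
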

\begin{rem}
From the proof of this result in~\cite{BriGFA} it follows that if groups $G$ and $H$ have combings $\sigma^G$ and $\sigma^H$ whose asynchronous width is bounded by some constants, then the combing for the split extension $G\rtimes H$ of $G$ by $H$, whose length is bounded by a constant, can be taken as the product (concatenation) $\sigma^H\sigma^G$ of combings $\sigma^H$ and $\sigma^G$, meaning that we traverse path $\sigma^H$ first, then path $\sigma^G$. Note that the product of combings in the opposite order, $\sigma^G\sigma^H$, may not have bounded asynchronous width, as the example of Baumslag--Solitar groups shows.
\end{rem}

Now let again $\Gamma=F\rtimes_{(\psi)}F(s,t)$ be the Bieri double of $G=F\rtimes_\psi\Z$, where $F$ is a free group on the set of free generators $\A$.

Our goal is to obtain an upper bound on the length $L(n)$ of the combing $\sigma^{F(s,t)}\cdot\sigma^{F(\A)}$ in terms of the growth of the automorphism $\psi$. (Here we treat a combing on a free group as a unique reduced word in a fixed system of generators which represents the given element of the group.) We prove the following proposition, adapting the reasoning for the abelian-by-cyclic groups from~\cite[Theorem 5.1]{BriPit} to the case of free-by-free groups.
\begin{prop}\label{prop-comb}
Let $P(n)$ be an increasing function bounding the growth of both $\psi$ and $\psi^{-1}$, i.e. $d_{\A}(1,\psi^n(a))\leq P(|n|)$ for all $a\in \A$, $n\in\Z$. Then the length $L(n)$ of the combing $\sigma^{F(s,t)}\cdot \sigma^{F(\A)}$ of the group $\Gamma=F({\A})\rtimes_{(\psi)}F(s,t)$ satisfies
\[
L(n)\leq nP(n)+n.
\]
\end{prop}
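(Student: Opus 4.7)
The plan is to take an arbitrary element $g \in \Gamma$ with $d_{\A \cup \{s,t\}}(1,g) \leq n$, represent it by a geodesic word $w = g_1 g_2 \cdots g_r$ (with $r \leq n$) in the generators $\A^\pm \cup \{s,t\}^\pm$, and then put $w$ into the semidirect product normal form $u \cdot v$ with $u \in F(s,t)$ and $v \in F(\A)$. Since both $F(s,t)$ and $F(\A)$ are free, the combings $\sigma^{F(s,t)}_u$ and $\sigma^{F(\A)}_v$ are simply the reduced words representing $u$ and $v$, so it suffices to bound the number of letters in the output of this rewriting.

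The rewriting step uses the commutation relations coming from the semidirect product structure: for every $a \in \A$, the identities $sas^{-1} = \psi(a)$ and $tat^{-1} = \psi(a)$ yield
\[
as = s\,\psi^{-1}(a),\qquad as^{-1} = s^{-1}\psi(a),\qquad at = t\,\psi^{-1}(a),\qquad at^{-1} = t^{-1}\psi(a).
\]
Scanning $w$ from left to right, I would push every letter from $\A^\pm$ rightward past every subsequent letter from $\{s,t\}^\pm$. After all such moves, $w$ becomes $u \cdot v$, where $u \in F(s,t)$ is the subword of $w$ consisting of the $\{s,t\}^\pm$ letters in their original order, and $v$ is a concatenation of conjugated letters, one for each occurrence of a letter from $\A^\pm$ in $w$.

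Now I would count. Suppose $w$ has $k$ letters from $\{s,t\}^\pm$ and $r-k$ from $\A^\pm$, so $k \leq r \leq n$. The word $u$ has length $k \leq n$, so $|\sigma^{F(s,t)}_u| \leq n$. For each occurrence of a letter $a_i \in \A^\pm$ in $w$, the associated exponent $m_i$ is the signed count of the $\{s,t\}^\pm$ letters that $a_i$ was commuted past; in particular $|m_i| \leq k \leq n$. By the hypothesis that $P$ bounds the growth of both $\psi$ and $\psi^{-1}$, we have
\[
\|\psi^{m_i}(a_i)\|_\A \leq P(|m_i|) \leq P(n).
\]
Therefore $v$ is a product of at most $r - k$ such terms, each of length at most $P(n)$, and $|\sigma^{F(\A)}_v| \leq (r-k)P(n) \leq n P(n)$. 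Adding the two contributions gives
\[
L(n) \leq n + nP(n),
\]
as required. No significant obstacle is expected: the argument is essentially careful bookkeeping of the commutation relations. The one subtle point to flag is that the exponents $m_i$ can be of either sign, which is exactly why the statement requires $P$ to bound the growth of $\psi^{-1}$ as well as that of $\psi$.
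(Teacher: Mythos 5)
Your proof is correct and takes essentially the same approach as the paper: both decompose an element $\gamma$ as $u \cdot g$ with $u \in F(s,t)$, $g \in F(\A)$, bound $|u|\leq n$ by projecting onto $F(s,t)$, and bound $d_\A(1,g)$ by noting that each $\A$-letter of a geodesic word gets conjugated by a power of $\psi$ of absolute value at most $n$, contributing at most $P(n)$ letters. The paper's only cosmetic difference is that it groups the geodesic word into alternating blocks $u_1 w_1 \cdots u_r w_r$ rather than scanning letter by letter, but the bookkeeping is identical.
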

\begin{proof}
Take arbitrary $\gamma\in\Gamma$ and write it as $\gamma=u\cdot g$, where $u\in F(s,t)$, $g\in F(\A)$. Let $n_0=d_{\A\cup\{s,t\}}(1,\gamma)$ be the length of the shortest word in generators $(\A\cup\{s,t\})^\pm$ representing element $\gamma$ in $\Gamma$. We would like to show that
\[
|\sigma_\gamma|=|\sigma_u\cdot\sigma_g|=d_{\{s,t\}}(1,u)+d_{\A}(1,g)\leq n_0P(n_0)+n_0.
\]
Considering the natural homomorphism $\eta\colon F(\A)\rtimes_{(\psi)}F(s,t)\to F(s,t)$, one observes that $u=\eta(\gamma)$ and hence $d_{\{s,t\}}(1,u)\leq n_0$. Therefore it suffices to show that
\[
d_{\A}(1,g)\leq n_0P(n_0).
\]

Denote $w_0$ the shortest word in generators $(\A\cup\{s,t\})^\pm$ such that $w_0=\gamma$ in $\Gamma$, so that $|w_0|_{\A\cup\{s,t\}}=n_0$. Then $w_0$ can be written as
\[
w_0=u_1w_1u_2w_2\cdot\dots\cdot u_rw_r,
\]
where $u_i\in F(s,t)$, $w_i\in F(\A)$ for all $i$. Then
\[\tag{*}
n_0=|w_0|_{\A\cup\{s,t\}}=\sum_{i=1}^r|u_i|_{\{s,t\}}+ \sum_{i=1}^r|w_i|_{\A}
\]
and $u=u_1\dots u_r$.

Denote 
\[
v_i=\prod_{j=1}^iu_j\cdot w_i\cdot\Big(\prod_{j=1}^iu_j\Big)^{-1}, \qquad i=1,\dots,r.
\]
Then, as one easily checks,
\[
v_1v_2\dots v_r=u_1w_1u_2w_2\dots u_rw_r\cdot \Big(\prod_{j=1}^iu_j\Big)^{-1}
\]
so that
\[
\gamma=w_0=v_1v_2\dots v_r\cdot \Big(\prod_{j=1}^iu_j\Big).
\]
Hence
\begin{multline*}
g=u^{-1}\gamma=\Big(\prod_{j=1}^ru_j\Big)^{-1}\cdot v_1v_2\dots v_r\cdot \Big(\prod_{j=1}^ru_j\Big)=\\
\prod_{i=1}^r\Big[\Big(\prod_{j=1}^ru_j\Big)^{-1}\cdot \prod_{j=1}^iu_j\cdot w_i\cdot \Big(\prod_{j=1}^iu_j\Big)^{-1}\cdot \prod_{j=1}^ru_j\Big]=\\
\prod_{i=1}^r\big(u_r^{-1}u_{r-1}^{-1}\dots u_{i+1}^{-1}\big)\cdot w_i\cdot \big(u_{i+1}\dots u_r\big).
\end{multline*}
If we denote by $\eps\colon F(s,t)\to\Z$ the homomorphism defined on the generators as: $s\mapsto 1$, $t\mapsto 1$, then for any $g\in F(\A)$ and any $u\in F(s,t)$ we have $ugu^{-1}=\psi^{\eps(u)}(g)$. Therefore,
\[
g=\prod_{i=1}^r\psi^{-\eps(u_{i+1}\dots u_r)}(w_i)=\prod_{i=1}^r\psi^{-\sum_{j=i+1}^r\eps(u_j)}(w_i).
\]

On the other hand,
\[\tag{**}
\Big|\sum_{j=i+1}^r\eps(u_j)\Big|\leq\sum_{j=i+1}^r|\eps(u_j)|\leq\sum_{j=1}^r|\eps(u_j)|\leq\sum_{j=1}^r|u_j|_{\{s,t\}}=|u|_{\{s,t\}}\leq n_0
\]
by the observation above.

Moreover, since for any $a\in\A$ we have $d_{\A}(1,\psi^n(a))\leq P(|n|)$, then for any $w_i\in F(\A)$ we get
\[
d_{\A}\big(1,\psi^n(w_i)\big)\leq P(|n|)\cdot d_{\A}(1,w_i).
\]

Finally, we get for the element $g$ the estimate:
\begin{multline*}
d_{\A}\big(1,g\big)\leq\sum_{i=1}^rd_{\A}\big(1,\psi^{-\sum_{j=i+1}^r\eps(u_j)}(w_i)\big)
\leq\sum_{i=1}^rP\Big(\Big|\sum_{j=i+1}^r\eps(u_j)\Big|\Big)\cdot d_{\A}(1,w_i)\\
\leq[\text{by~(**)}]\leq\sum_{i=1}^rP(n_0)\cdot d_{\A}(1,w_i)=P(n_0)\cdot\sum_{i=1}^rd_{\A}(1,w_i)\leq
[\text{by (*)}]\leq P(n_0)n_0.
\end{multline*}
This shows that $|\sigma_\gamma|\leq n_0P(n_0)+n_0$ and finishes the proof of the Proposition.
\end{proof}

Now we are ready to prove the upper bound for the Dehn function of the Bieri double.

\begin{proof}[Proof of Proposition~\ref{prop:upper}] 
As was noted above, $\Gamma=\Gamma(F\rtimes_\psi\Z)\cong F\rtimes_{(\psi)}F(s,t)$. As a free group, $F$ is asynchronously combable (with constant $K=1$) and $F(s,t)$ is also word-hyperbolic. Therefore by Theorem~\ref{th:1}, $\Gamma$ is asynchronously combable and hence, by Proposition~\ref{propBriPit}, $\delta_\Gamma(n)\prq n L(n)$. But due to Proposition~\ref{prop-comb}, $L(n)\prq n^{d+1}$, and therefore $\delta_\Gamma(n)\prq n^{d+2}$.
\end{proof}

\section{Cube complexes}

\subsection{Special cube complexes}

In their article~\cite{HW} Haglund and Wise established that the fundamental groups of the so-called special cube complexes admit embeddings into right-angled Artin groups. This gives us a natural class of subgroups of right-angled Artin groups and suggests that we construct our examples within this class. We summarize the relevant definitions and results from~\cite{HW}  about special cube complexes in this section.

\begin{defi} (Cube complex) An \emph{$n$--cube} is a copy of $[0,1]^n\subset \R^n$, viewed as a metric space with the euclidean metric of $\R^n$. (We will suppress `$n$--' and call `$n$--cubes' just `cubes'.) A \emph{face} is a metric subspace of an $n$--cube obtained by restricting some of coordinates (or all of them) to either $0$ or $1$. A \emph{cube complex} is a CW complex obtained by gluing $n$--cubes (of possibly varied dimensions $n$) together along faces via isometries. If all cubes of a cube complex are $2$--cubes, such cube complex is called a \emph{square complex}. A cube complex is \emph{simple} if the link of every vertex of is a simplicial complex. A simplicial complex is \emph{flag} if any collection of $k+1$ pairwise adjacent vertices spans a $k$--simplex. A cube complex is \emph{non-positively curved} if the link of each vertex is a flag simplicial complex. 
\end{defi}

\begin{defi} (Hyperplane)
A \emph{midcube} of an $n$--cube $[0,1]^n$ is a subset obtained by restricting one of the coordinates to $\frac12$. A \emph{hyperplane} of a cube complex $X$ is a connected component of a new cube complex $Y$ which is formed as follows:
\begin{itemize}
\item the cubes of $Y$ are the midcubes of $X$;
\item the restriction of a $(k+1)$--cell of $X$ to a midcube of $[0,1]^k$ defines the attaching map of a $k$--cell in $Y$.
\end{itemize}
An edge $a$ of $X$ is \emph{dual} to some hyperplane $H$ if the midpoint of $a$ is a vertex of $H$.
\end{defi}

\begin{defi} (Parallelism, Walls)
Two oriented edges $a$, $b$ of a cube complex $X$ are called \emph{elementary parallel} if there is a square of $X$ containing $a$ and $b$ and such that the attaching map sends two opposite edges of $[0,1]\times[0,1]$ with the same orientation to $a$ and $b$ respectively. Define the \emph{parallelism} on oriented edges of $X$ as the equivalence relation generated by elementary parallelism. An \emph{(oriented) wall} of $X$ is a parallelism class of oriented edges. Note that every hyperplane $H$ in $X$ defines a pair of oriented walls consisting of edges dual to $H$.
\end{defi}

Now we describe four pathologies for interaction of hyperplanes in a cube complex, which are forbidden for special cube complexes.

\begin{defi} (Self-intersection) A hyperplane $H$ in $X$ \emph{self-intersects}, if it contains more than one midcube from the same cube of $X$. 
\end{defi}

\begin{defi} (One-sided) 
A hyperplane $H$ is \emph{two-sided} if there exists a combinatorial map of CW complexes $H\times[0,1]\to X$ mapping $H\times\{\frac12\}$ identically to $H$. (Recall that a cellular map $f\colon X\to Y$ of CW complexes is \emph{combinatorial} if the restriction of $f$ to each open cell of $X$ is a homeomorphism onto its image.) A hyperplane $H$ in $X$ is called \emph{one-sided} if it is not two-sided.
\end{defi}

\begin{defi} (Self-osculating) A hyperplane $H$ in $X$ is \emph{self-osculating} if there are two edges $a$, $b$ dual to $H$ which do not belong to a common square of $X$ but share a common vertex. If in addition there is a consistent choice of orientation on the edges dual to $H$ which makes the common vertex for $a$, $b$ their origin or terminus, then the hyperplane $H$ is called \emph{directly self-osculating}.
\end{defi}

\begin{defi} (Inter-osculating) Two distinct hyperplanes $H_1$, $H_2$ of $X$ are \emph{inter-oscula\-ting} if they intersect and there are edges $a_1$ dual to $H_1$ and $a_2$ dual to $H_2$ which do not belong to the same square of $X$ but share a common vertex.
\end{defi}

\begin{defi} (Special cube complex) A non-positively curved cube complex is called \emph{special} if its hyperplanes are all two-sided, with no self-intersections, self-osculations or inter-osculations.
\end{defi}

\begin{defi} (Virtually special group) A group $G$ is called \emph{special} if there exists a special cube complex $X$ whose fundamental group is isomorphic to $G$. A group $G$ is \emph{virtually special} if there exists a special cube complex $X$ and a finite index subgroup $H\le G$ such that $H$ is isomorphic to the fundamental group of $X$. 
\end{defi}

\begin{defi} (Right-angled Artin group)
Let $\Delta$ be a simplicial graph. The \emph{right-angled Artin group}, or RAAG, associated to $\Delta$, is a finitely presented group $A(\Delta)$ given by the presentation:
\[
A(\Delta)=\la a_i\in\Vertices(\Delta)  \mid [a_i,a_j]=1 \text{ if } (a_i,a_j)\in \Edges(\Delta)\ra.
\] 
\end{defi}

\begin{defi} (Salvetti complex)
Given a right-angled Artin group $A(\Delta)$, the \emph{Salvetti complex} associated to $A(\Delta)$ is a non-positively curved cube complex $S_\Delta$ defined as follows. For each $a_i\in\Vertices(\Delta)$ let $S^1_{a_i}$ be a circle endowed with a structure of a CW complex having a single $0$--cell and a single $1$--cell. Let $n=\Card(\Vertices(\Delta))$ and let $T=\prod_{j=1}^n S^1_{a_{j}}$ be an $n$--dimensional torus with the product CW structure. For every full subgraph $K\subset\Delta$ with $\Vertices(K)=\{a_{i_1},\dots,a_{i_k}\}$ define a $k$--dimensional torus $T_K$ as a Cartesian product of CW complexes:
$T_K=\prod_{j=1}^k S^1_{a_{i_j}}$
and observe that $T_K$ can be identified as a combinatorial subcomplex of $T$. Then the \emph{Salvetti complex associated with $A(\Delta)$} is 
\[
S_\Delta=\bigcup\big\{T_K\subset T\mid K \text{ a full subgraph of }\Delta\big\}.
\]
Thus $S_\Delta$ has a single $0$--cell and $n$ $1$--cells. Each edge $(a_i,a_j)\in\Edges(\Delta)$ contributes a square $2$--cell to $S_\Delta$ with the attaching map $a_ia_ja_i^{-1}a_j^{-1}$. And in general each full subgraph $K\subset \Delta$ contributes a $k$--dimensional cell to $S_\Delta$ where $k=\Card(\Vertices(K))$.
\end{defi}

\begin{thm}[\cite{HW},Th.~4.2] A cube complex is special if and only if it admits a local isometry into the Salvetti complex of some right-angled Artin group.
\end{thm}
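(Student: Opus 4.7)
The plan is to prove each direction by directly exploiting the four forbidden hyperplane pathologies that appear in the definition of special. For the forward direction I would associate a canonical target RAAG to a special cube complex $X$ and build a combinatorial candidate map $X \to S_\Delta$; for the reverse direction I would first verify that $S_\Delta$ is itself special, and then observe that local isometries pull specialness back.

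For the forward direction, let $\Delta$ be the simplicial graph whose vertices are the hyperplanes of $X$ and whose edges record pairs of hyperplanes that cross in some square of $X$. This graph is well-defined because no-self-intersection makes each hyperplane an unambiguous vertex. Two-sidedness lets me choose, for each hyperplane $H$, a coherent orientation on all edges dual to $H$; send each such oriented edge to the loop labeled $a_H$ in $S_\Delta$. A square of $X$ whose four edges are dual to hyperplanes $H_1, H_2$ then maps to the commutativity square for $a_{H_1}$ and $a_{H_2}$, which exists in $S_\Delta$ precisely because $H_1$ and $H_2$ cross in $X$, hence are joined by an edge of $\Delta$. Extend by induction on dimension using that full subcomplexes of $\mathrm{Lk}(v, X)$ come from collections of pairwise-crossing hyperplanes at $v$. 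To check that the resulting combinatorial map $f\colon X \to S_\Delta$ is a local isometry, the condition to verify is that for every vertex $v$ the induced map $\mathrm{Lk}(v, X) \to \mathrm{Lk}(f(v), S_\Delta)$ is an injective map onto a full subcomplex. Injectivity on vertices of the link comes from the absence of direct self-osculation (two edges at $v$ dual to the same $H$ with the chosen orientation cannot both start at $v$); injectivity on edges of the link, and fullness, come from the absence of inter-osculation together with the definition of the edge set of $\Delta$.

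For the reverse direction, I would first check that the Salvetti complex $S_\Delta$ is itself special: its hyperplanes are indexed by the generators $a_i$, each carries an obvious transverse orientation inherited from the product structure on standard tori, and the torus-join description rules out self-intersection, self-osculation, and inter-osculation in a mechanical way. Given a local isometry $f\colon X \to S_\Delta$, the preimage of a hyperplane of $S_\Delta$ is a disjoint union of hyperplanes of $X$, and any pathology among the hyperplanes of $X$ would project to the same pathology in $S_\Delta$ or else contradict local injectivity of $f$ on a link; since no pathology is present in $S_\Delta$, none is present in $X$, so $X$ is special.

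The main obstacle I expect is the link analysis in the forward direction: packaging the four hyperplane conditions precisely enough to conclude that $f$ is simultaneously locally injective and sends each vertex link to a full subcomplex of the corresponding link of $S_\Delta$. Two-sidedness is what allows $f$ to be defined coherently on edges, no-self-intersection is what forces $f$ to be locally injective on each cube, and the two no-osculation conditions are exactly what is needed to promote this to local injectivity on the star of a vertex and to match fullness with the flag structure built into $S_\Delta$. Once these link computations are carried out the rest of the argument is essentially formal.
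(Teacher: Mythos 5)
The paper does not prove this theorem; it cites it directly from Haglund--Wise \cite{HW}, so there is no in-paper proof to compare against. Your proposal is a correct outline of the Haglund--Wise argument: the crossing graph $\Delta$ on hyperplanes, the orientation of dual edges via two-sidedness, the canonical combinatorial map to $S_\Delta$, and the matching of the four hyperplane conditions to the ingredients of the local-isometry criterion. Two points worth tightening. First, injectivity of the link map on \emph{edges} follows from simpliciality of the vertex links of a non-positively curved cube complex (two squares sharing two consecutive edges at a vertex would violate it), not from the absence of inter-osculation --- the latter is what yields \emph{fullness} of the image. Second, the biconditional is exact only when ``special'' is defined via \emph{direct} self-osculation, which is the condition your argument in fact uses; the definition quoted in this paper drops the word ``directly,'' a slight imprecision relative to the original source (a complex admitting a local isometry to some $S_\Delta$ can still exhibit indirect self-osculation, since $a_H^+$ and $a_H^-$ are non-adjacent vertices in the link of $S_\Delta$).
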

Since local isometries of CAT(0) spaces are $\pi_1$-injective, one gets the following

\begin{cor}\label{cor:spec_raag}
The fundamental group of a special cube complex is isomorphic to a subgroup of a right-angled Artin group.
\end{cor}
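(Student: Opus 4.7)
The plan is to deduce this directly from the preceding theorem of Haglund and Wise. Given a special cube complex $X$, that theorem provides a local isometry $f\colon X\to S_\Delta$ into the Salvetti complex of some right-angled Artin group $A(\Delta)$. Since $S_\Delta$ is non-positively curved by construction (its links are flag complexes: the link of its unique $0$--cell is exactly $\Delta$ viewed as a flag complex via its cliques), the universal cover $\widetilde{S}_\Delta$ is CAT(0).

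The key ingredient flagged in the excerpt is that local isometries between non-positively curved spaces induce injections on fundamental groups. I would invoke this as follows. Lift $f$ to a map $\widetilde{f}\colon\widetilde{X}\to\widetilde{S}_\Delta$ of universal covers. Both universal covers are CAT(0), and $\widetilde{f}$ is still a local isometry. A standard CAT(0) argument (see Bridson--Haefliger, II.4) shows that a local isometry from a complete CAT(0) space into another is automatically a global isometric embedding onto a convex subspace; in particular $\widetilde{f}$ is injective. Since $f$ is a local isometry of locally CAT(0) spaces, $\pi_1(f)\colon\pi_1(X)\to\pi_1(S_\Delta)$ is therefore injective: any nontrivial loop in $X$ representing a class in the kernel would lift to a path in $\widetilde{X}$ with distinct endpoints mapping to equal points of $\widetilde{S}_\Delta$, contradicting injectivity of $\widetilde{f}$.

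Finally, the Salvetti complex is designed so that $\pi_1(S_\Delta)\cong A(\Delta)$, which can be read off directly from the $1$- and $2$--skeleton of $S_\Delta$: one $0$--cell, one $1$--cell per vertex of $\Delta$, and one commutator $2$--cell $a_ia_ja_i^{-1}a_j^{-1}$ per edge of $\Delta$, with higher cells attaching redundantly. Combining these steps, $\pi_1(X)$ embeds as a subgroup of $A(\Delta)$, which is the claim. No step should be difficult; the only mild subtlety is citing (or reproving) the fact that local isometries of non-positively curved complexes are $\pi_1$-injective, which is the only non-formal ingredient and is the standard consequence of the CAT(0) geometry of the universal covers.
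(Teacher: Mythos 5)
Your proposal is correct and follows exactly the paper's one-line argument: apply the Haglund--Wise theorem to get a local isometry $X\to S_\Delta$, then use that local isometries of non-positively curved complexes are $\pi_1$-injective, together with $\pi_1(S_\Delta)\cong A(\Delta)$. You have merely expanded the paper's terse justification by sketching the universal-cover argument behind $\pi_1$-injectivity, but the route is the same.
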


\subsection{Morse theory for cube complexes}

We will use the following definitions from~\cite{BaBr,BeBr}.

\begin{defi} (Morse function)
A map $f\colon X\to\R$ defined on a cube complex $X$ is a \emph{Morse function} if 
\begin{itemize}
\item for every cell $e$ of $X$, with the characteristic map $\chi_e\colon [0,1]^m\to e$, the composition $f\chi_e\colon [0,1]^m\to\R$ extends to an affine map $\R^m\to \R$ and $f\chi_e$ is constant only when $\dim e=0$;
\item the image of the $0$--skeleton of $X$ is discrete in $\R$.
\end{itemize}
\end{defi}

\begin{defi} (Circle-valued Morse function)
A \emph{circle-valued Morse function} on a cube complex $X$ is a cellular map $f\colon X\to S^1$ with the property that $f$ lifts to a Morse function between universal covers.
\end{defi}

\begin{defi} (Ascending and descending links)
Suppose $X$ is a cube complex, $f\colon X\to S^1$ is a circle-valued Morse function and $\tilde f\colon \tilde X\to\R$ is the corresponding Morse function. Let $v\in X^{(0)}$ and note that the link of $v$ in $X$ is naturally isomorphic to the link of any lift $\tilde v$ of $v$ in $\tilde X$. We say that a cell $\tilde e\subset \tilde X$ contributes to the \emph{ascending} (respectively \emph{descending}) link of $\tilde v$ if $\tilde v\in\tilde e$ and $\tilde f|_{\tilde e}$ achieves its minimum (respectively, maximum) value at $\tilde v$. The ascending (respectively, descending) link of $v$ is then defined to be the subset of the link $\Lk(v,X)$ naturally identified with the ascending (respectively, descending) link of $\tilde v$. Note that in the case when $X$ is a square complex, all ascending, descending and entire links are graphs.
\end{defi}

The following characterization of free-by-cyclic groups was proven in~\cite{BaBr} (see also~\cite[Th.~10.1]{Howie}).

\begin{thm}[\cite{BaBr}, Proposition 2.5]\label{thm:fbc}
If $f\colon X\to S^1$ is a circle-valued Morse function on a $2$--complex $X$ all of whose ascending and descending links are trees, then X is aspherical and $\pi_1(X)$ is free-by-cyclic. 
This means that there is a short exact sequence 
\[
1\longrightarrow F_m\longrightarrow \pi_1(X)\longrightarrow \Z\longrightarrow 1,
\]
where the free group $F_m$ is isomorphic to $\pi_1(f^{-1}(\mathrm{pt}))$, $\mathrm{pt}$ being any point on $S^{1}$.
\end{thm}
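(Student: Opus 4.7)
\begin{proof*}[Proof proposal]
The plan is to run the Bestvina--Brady style Morse-theoretic argument on the lift of $f$ to the universal cover. Let $\tilde f\colon\tilde X\to\R$ be such a lift, and consider the sublevel sets $\tilde X_{\le t}:=\tilde f^{-1}((-\infty,t])$. Since $\tilde f$ is affine on each cell and nonconstant outside the $0$-skeleton, critical values occur only at images of vertices, and the homotopy type of $\tilde X_{\le t}$ can change only when $t$ passes through such a value.

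First I would establish asphericity. When $t$ crosses the value $\tilde f(\tilde v)$ at a single vertex $\tilde v$, the set $\tilde X_{\le t+\eps}$ is obtained from $\tilde X_{\le t-\eps}$ by attaching a neighborhood of $\tilde v$ whose frontier is the descending link of $\tilde v$. Since that link is a tree, hence contractible, the attachment does not alter the homotopy type; symmetrically, crossing a vertex value from above uses the ascending link, which is also a tree. Exhausting $\tilde X$ by sublevel sets starting from one chosen below any given compact set, and using local finiteness of $\tilde f$ on the cover, I would conclude that every $\tilde X_{\le t}$ is contractible; taking the directed union then yields contractibility of $\tilde X$, so $X$ is aspherical.

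For the free-by-cyclic statement, I would pick a regular value $\mathrm{pt}\in S^1$ whose preimage avoids the $0$-skeleton of $X$. Then $F:=f^{-1}(\mathrm{pt})$ is a $1$-dimensional subcomplex, i.e.\ a graph, and $\pi_1(F)$ is free of some rank $m$. The induced map $f_\ast\colon\pi_1(X)\to\pi_1(S^1)=\Z$ is surjective, since the deck transformations of $\tilde X$ shift the values of $\tilde f$ by integers and thereby force $\tilde f$ to be unbounded. I would then regard $X$ as a graph of spaces over $S^1$ obtained by cutting along $F$, and apply Van Kampen (equivalently, Bass--Serre theory to the resulting HNN-like decomposition) to extract the short exact sequence $1\to\pi_1(F)=F_m\to\pi_1(X)\to\Z\to 1$.

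The main obstacle is the ``slab'' deformation-retract step underpinning both parts: one must show that each component of $\tilde f^{-1}([k,k+1])$ deformation retracts onto its lower boundary $\tilde f^{-1}(k)$ (and, symmetrically, onto its upper boundary $\tilde f^{-1}(k+1)$). The naive linear flow along $\tilde f$ does the job in the interior of each cell, and the tree condition on ascending and descending links is precisely what allows this flow to be glued consistently across vertices, by contracting the ascending or descending portion of each vertex link one vertex at a time. Once this retract is in place, asphericity follows by the Morse analysis above, and the identification of $\ker f_\ast$ with $\pi_1(F)$ becomes a formal consequence of the mapping-torus structure.
\end{proof*}
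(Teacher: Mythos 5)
Your proposal follows the same Bestvina--Brady/Barnard--Brady Morse-theoretic outline that underlies the cited result, and the ``slab'' deformation retraction you single out in the last paragraph is indeed the crux that drives both conclusions. However, as written the asphericity step has a genuine gap. Knowing that coning off a contractible descending (resp.\ ascending) link is a homotopy equivalence only tells you that all the sublevel sets $\tilde X_{\le t}$ share the \emph{same} homotopy type; it does not identify that type, and there is no ``bottom'' sublevel set to start from, since $\tilde f$ is unbounded below on the universal cover (deck translations shift $\tilde f$ by integers). So the sentence beginning ``Exhausting $\tilde X$ by sublevel sets starting from one chosen below any given compact set \dots I would conclude that every $\tilde X_{\le t}$ is contractible'' is not supported by what precedes it.

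The missing closing move is to bring in what the common homotopy type actually is. The slab retractions give that $\tilde X$ (and each $\tilde X_{\le t}$) deformation retracts onto a single generic level set $\tilde f^{-1}(t_0)$, which is a graph because $X$ is $2$--dimensional. Since $\tilde X$ is simply connected, that graph must be a tree, hence contractible, and $\tilde X$ is contractible. Alternatively, and more directly: a map $S^n\to\tilde X$ with $n\ge 2$ has compact image, so it lands in some slab $\tilde f^{-1}([a,b])$, which deformation retracts (by the tree-link hypothesis) onto the $1$--complex $\tilde f^{-1}(a)$; graphs are aspherical, so the map is nullhomotopic, giving $\pi_n(\tilde X)=0$ for $n\ge 2$, and contractibility then follows from $\pi_1(\tilde X)=0$ and Whitehead. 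With the asphericity argument repaired this way, your mapping-torus/Van Kampen derivation of the short exact sequence is sound, modulo the routine observations that the level set is a finite graph and that it is connected (so that $\pi_1(f^{-1}(\mathrm{pt}))$ is a single finitely generated free group rather than a free product over components); note also that the theorem allows $\mathrm{pt}$ to be a critical value, which is in fact what the paper uses, though the two cases give homotopy equivalent level graphs.
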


\section{Groups $G_{m,k}$}

In this section we define a sequence of groups $G_{m,k}$ and study their presentation complex. We show that it is a non-positively curved square complex and that the groups are free-by-cyclic. 

\subsection{LOG definition\label{sect:log}} Recall that a {\em labeled, oriented graph}, or LOG, consists of a finite, directed graph with labels on the vertices and edges satisfying the following: the vertices have distinct labels, and the edge labels are chosen from the set of vertex labels. 

A LOG determines a  finite presentation as follows. The set of generators is the set of vertex labels. The set of relations is in one-to-one correspondence with the set of edges;  there is a relation of the form $a^{-1}ua=v$ for each oriented edge labeled $a$ from vertex $u$ to vertex $v$.  

Let $m\in\N$, $m\ge1$. For $k=0,\dots,m$, let $G_{m,k}$ be a group defined by the LOG presentation in the Figure~\ref{fig:log}:

\begin{figure}[htb] 
\begin{tikzpicture}[scale=1.0]
\begin{scope}[thick]
\fill (0,0) circle (1.5pt) node[right=1pt] {$a_{m+1}$};
\fill (0,2) circle (1.5pt) node[below right=0.5pt] {$a_{1}$};
\fill (2,0) circle (1.5pt) node[right=1pt] {$a_{m+2}$};
\fill (2,2) circle (1.5pt) node[below right=0.5pt] {$a_{2}$};
\fill (5,0) circle (1.5pt) node[right=1pt] {$a_{m+k}$};
\fill (5,2) circle (1.5pt) node[below right=0.5pt] {$a_{k}$};
\fill (7,0) circle (1.5pt) node[right=1pt] {$a_{m+k+1}$};
\fill (7,2) circle (1.5pt) node[below right=0.5pt] {$a_{k+1}$};
\fill (10,2) circle (1.5pt) node[below right=0.5pt] {$a_{m}$};

\draw[->-=0.53] (0,3) ellipse (0.5cm and 1cm) node[left=-4pt] {$a_2$};
\draw[->-=0.57] (0,2)-- node[right=1pt] {$a_{m+2}$} (0,0);
\draw[->-=0.53] (2,3) ellipse (0.5cm and 1cm) node[left=-4pt] {$a_3$};
\draw[->-=0.57] (2,2)-- node[right=1pt] {$a_{m+3}$} (2,0);
\draw (3.5,3) node {{\bf\ldots}};
\draw[->-=0.53] (5,3) ellipse (0.5cm and 1cm) node[left=-14pt] {$a_{k+1}$};
\draw[->-=0.57] (5,2)-- node[right=1pt] {$a_{m+k+1}$} (5,0);
\draw[->-=0.53] (7,3) ellipse (0.5cm and 1cm) node[left=-14pt] {$a_{k+2}$};
\draw (8.5,3) node {{\bf\ldots}};
\draw[->-=0.53] (10,3) ellipse (0.5cm and 1cm) node[left=-16pt] {$a_{m+1}$};

\end{scope}
\end{tikzpicture}
\caption{\label{fig:log} The LOG description of $G_{m,k}$.} 
\end{figure}
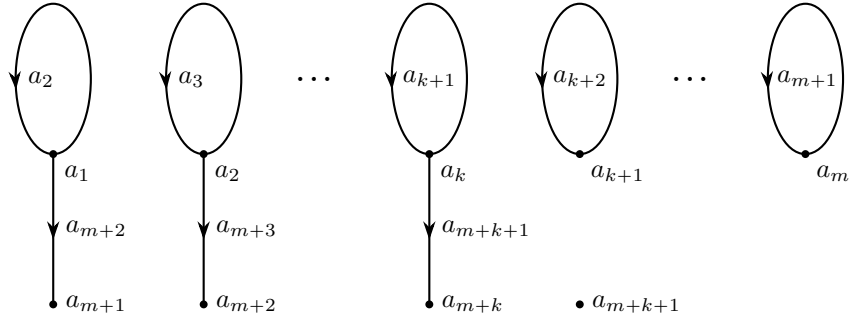

\noindent
i.e. 
\begin{multline*}
G_{m,k}=\la\, a_1,\dots,a_{m+k+1} \mid [a_i,a_{i+1}]=1,\quad i=1,\dots,m,\\
a_{m+j+1}^{-1}a_ja_{m+j+1}=a_{m+j},\quad j=1,\dots,k\, \ra.
\end{multline*}

Clearly $G_{m,k}$ is an HNN extension of $G_{m,k-1}$ with the stable letter $a_{m+k+1}$ so there is a natural tower of inclusions 
\[
G_{m,0}\subset G_{m,1}\subset G_{m,2}\subset\dots \subset  G_{m,m}.
\]

\subsection{CAT$(0)$ structure for $G_{m,k}$}

\begin{figure}[h!]
\begin{tikzpicture}[scale=0.75]

\fill (0,0) circle (2pt);
\fill (0,3) circle (2pt);
\fill (3,0) circle (2pt);
\fill (3,3) circle (2pt);

\begin{scope}[]
\draw[->-=0.53] (0,0)--node[left=1pt] {$a_{j+1}$}(0,3);
\draw[->-=0.54] (0,3)--node[above=1pt] {$a_j$}(3,3);
\draw[->-=0.54] (0,0)--node[below=1pt] {$a_j$} (3,0);
\draw[->-=0.53] (3,0)--node[right=1pt] {$a_{j+1}$}(3,3);
\end{scope}

\begin{scope}[densely dashed]
\draw (0.6,0)--(0,0.6);
\draw (0,2.4)--(0.6,3);
\draw (2.4,3)--(3,2.4);
\draw (3,0.6)--(2.4,0);
\end{scope}


\begin{scope}[thick]
\fill (7,0.5) circle (2pt) node[left=1pt] {$a_j^-$};
\fill (7,2.5) circle (2pt) node[left=1pt] {$a_j^+$};
\fill (10,0.5) circle (2pt) node[right=1pt] {$a_{j+1}^-$};
\fill (10,2.5) circle (2pt) node[right=1pt] {$a_{j+1}^+$};
\draw (7,0.5)--(10,0.5);
\draw (7,2.5)--(10,2.5);
\draw (7,0.5)--(10,2.5);
\draw (7,2.5)--(10,0.5);
\end{scope}


\fill (0,-5) circle (2pt);
\fill (0,-2) circle (2pt);
\fill (3,-5) circle (2pt);
\fill (3,-2) circle (2pt);

\begin{scope}[]
\draw[->-=0.53] (0,-5)--node[left=1pt] {$a_{m+j+1}$}(0,-2);
\draw[->-=0.54] (0,-2)--node[above=1pt] {$a_{m+j}$}(3,-2);
\draw[->-=0.54] (0,-5)--node[below=1pt] {$a_j$} (3,-5);
\draw[->-=0.53] (3,-5)--node[right=1pt] {$a_{m+j+1}$}(3,-2);
\end{scope}

\begin{scope}[densely dashed]
\draw (0.6,-5)--(0,-4.4);
\draw (0,-2.6)--(0.6,-2);
\draw (2.4,-2)--(3,-2.6);
\draw (3,-4.4)--(2.4,-5);
\end{scope}


\begin{scope}[thick]
\fill (7,-5) circle (2pt) node[left=1pt] {$a_j^-$};
\fill (7,-4) circle (2pt) node[left=1pt] {$a_j^+$};
\fill (7,-3) circle (2pt) node[left=1pt] {$a_{m+j}^-$};
\fill (7,-2) circle (2pt) node[left=1pt] {$a_{m+j}^+$};
\fill (10,-4.5) circle (2pt) node[right=1pt] {$a_{m+j+1}^-$};
\fill (10,-2.5) circle (2pt) node[right=1pt] {$a_{m+j+1}^+$};
\draw (7,-5)--(10,-4.5);
\draw (7,-4)--(10,-4.5);
\draw (7,-3)--(10,-2.5);
\draw (7,-2)--(10,-2.5);
\end{scope}

\end{tikzpicture}
\caption{The contribution of the relations $a_{j+1}^{-1}a_ja_{j+1}=a_j$, $1 \leq j \leq m$, and $a_{m+j+1}^{-1}a_ja_{m+j+1}=a_{m+j}$, $1 \leq j \leq k$, to the link of the $0$--cell of the presentation complex $K_{m,k}$.}
\label{fig:lk1}
\end{figure}
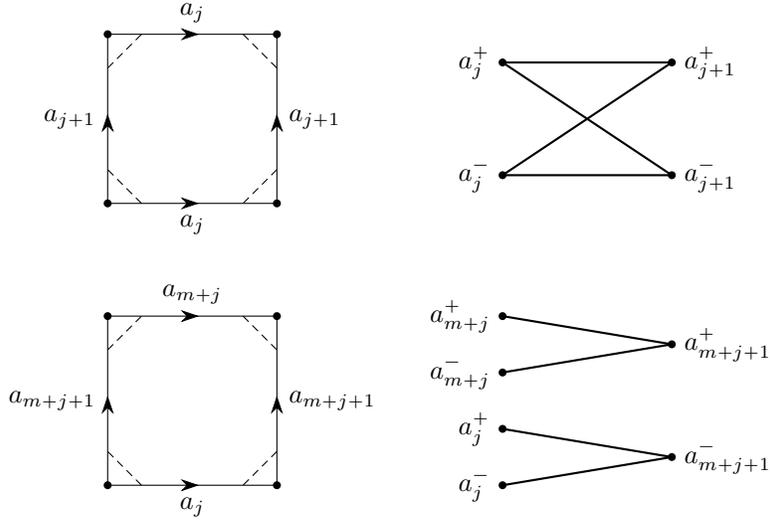

One way of producing a CAT$(0)$ structure on groups $G_{m,k}$ is to verify that the presentation $2$--complex corresponding to their LOG presentation can be metrized so that it is a non-positively curved, piecewise euclidean (PE) complex. 

Let $K_{m,k}$ denote the presentation $2$--complex corresponding to the LOG presentation above of $G_{m,k}$. It has one $0$--cell, $(m+k+1)$ $1$--cells labeled by $a_1, \ldots, a_{m+k+1}$, and $(m+k)$  $2$--cells corresponding to the relations $a_{j+1}^{-1}a_ja_{j+1}=a_j$ for $1 \leq j \leq m$ and $a_{m+j+1}^{-1}a_{j}a_{m+j+1} = a_{m+j}$ for $1 \leq j \leq k$. By construction, $K_{m,k}$ is a subcomplex of $K_{m,k+1}$. We endow $K_{m,k}$ with a PE structure by using regular euclidean squares for the $2$--cells, and using local isometric embedding attaching maps. 

\begin{prop}\label{prop:cat0}
The presentation complex $K_{m,k}$ defined above is a non-positively curved PE complex.
\end{prop}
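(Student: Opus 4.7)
The plan is to invoke Gromov's link condition: a piecewise-Euclidean square complex is non-positively curved if and only if the link of each vertex is a flag simplicial complex. Since the links in a square complex are graphs, the flag condition amounts to the link being a simple graph (no loops, no double edges) with no 3-cycles. As $K_{m,k}$ has a unique $0$-cell $v$, it suffices to analyze $\Lk(v, K_{m,k})$.

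First I would write down the link explicitly. Its $2(m+k+1)$ vertices are the outgoing and incoming directions $a_i^-, a_i^+$ of the generator loops, and its edges come from the corners of the two types of squares. Each commuting square $[a_j, a_{j+1}] = 1$ contributes a copy of $K_{2,2}$ between $\{a_j^\pm\}$ and $\{a_{j+1}^\pm\}$. Each HNN square $a_{m+j+1}^{-1} a_j a_{m+j+1} = a_{m+j}$ contributes four edges: two joining $a_{m+j+1}^-$ to $a_j^\pm$ and two joining $a_{m+j+1}^+$ to $a_{m+j}^\pm$, exactly as shown in Figure~\ref{fig:lk1}.

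The next step is to verify no loops and no double edges. Every contributed edge joins vertices with distinct $a_i$-indices, so loops do not occur. Double edges could arise only if two different squares contributed an edge on the same vertex pair; a brief case analysis (commuting vs.\ commuting, commuting vs.\ HNN, HNN vs.\ HNN) shows this forces a trivial coincidence, because the index sets $\{j, j+1\}$, $\{j, m+j+1\}$ and $\{m+j, m+j+1\}$ involved can match only when the two squares coincide.

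The main obstacle, and the substantive step, is ruling out 3-cycles. Suppose $a_{i_1}^{\epsilon_1}, a_{i_2}^{\epsilon_2}, a_{i_3}^{\epsilon_3}$ form a triangle; then $i_1, i_2, i_3$ are pairwise distinct (no loops), and we may take $i_1 < i_2 < i_3$. If $i_3 \leq m+1$, all three edges must be commuting edges and therefore require consecutive indices, so no such triangle exists. If $i_3 \geq m+2$, then $i_3 = m+j+1$ for a unique $j \in \{1,\dots,k\}$, and the description above forces the two neighbours of $a_{i_3}^{\epsilon_3}$ in the link to have a common index ($j$ if $\epsilon_3 = -$, or $m+j$ if $\epsilon_3 = +$), contradicting $i_1 \neq i_2$. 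Combining these observations shows $\Lk(v, K_{m,k})$ is a flag graph, so by Gromov's criterion $K_{m,k}$ is a non-positively curved PE square complex. The only nontrivial parts of the plan are the sign/index bookkeeping needed to list the edges of the link and the short case analysis for 3-cycles; both are routine once the link is drawn.
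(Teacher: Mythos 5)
Your proof is correct, and it reaches the same goal (girth $\geq 4$ in the link) by a different route. You enumerate the possible edges of $\Lk(v, K_{m,k})$ directly from the two relator types and then run a case analysis on a putative $3$-cycle, pivoting on the largest index $i_3$: if $i_3 \leq m+1$ the only available edges join consecutive indices (impossible for three pairwise distinct indices), and if $i_3 \geq m+2$ the two lower-indexed neighbours of $a_{i_3}^{\pm}$ are forced to carry the same index. The paper instead builds the link incrementally as a nested union $L_1 \subset L_2 \subset \dots \subset L_{m+k+1}$, attaching the new pair $a_{i+1}^{\pm}$ at each step, and maintains the invariant that $a_l^+$ and $a_l^-$ are never adjacent; since each new vertex attaches precisely to such a non-adjacent pair and the two new vertices are not joined to one another, no $2$- or $3$-cycle can appear at any stage. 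The paper's version isolates a clean structural invariant and mirrors the inductive structure of the group family ($G_{m,0}\subset G_{m,1}\subset\cdots$), while yours is a self-contained direct enumeration; both are routine once Figure~\ref{fig:lk1} is in hand, and your bookkeeping of the index sets $\{j,j+1\}$, $\{j,m+j+1\}$, $\{m+j,m+j+1\}$ for the loop/double-edge exclusion is accurate.
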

\begin{proof}
We need to check the Gromov link condition~\cite[Th.~II.5.20]{BH}. For the square $2$--cells, it reduces to a purely combinatorial requirement that the link of every $0$--cell has no circuits of combinatorial length less than $4$. Figure~\ref{fig:lk1} shows the contributions of the relations of $G_{m,k}$ to the link $L$ of the unique $0$--cell of $K_{m,k}$. We adopt the following notation: if a $1$--cell $a$ originates at $0$--cell $u$ and terminates at $0$--cell $v$, then it contributes a vertex denoted $a^-$ to the link of $u$ and a vertex denoted $a^+$ to the link of $v$.

We see that the link $L$ can be obtained as a union of a sequence of graphs:
\[
L_1\subset L_2\subset\dots\subset L_{m+k+1}=L,
\]
where $L_1$ is just a pair of disjoint vertices $a_1^+$, $a_1^-$, and $L_{i+1}$ is obtained from $L_i$ by adding a new pair of disjoint vertices $a_{i+1}^+$, $a_{i+1}^-$ and connecting each one of them to some pair $a_s^+$, $a_s^-$ with $s<i+1$. We observe that this procedure preserves the following property: ``for every $l$, vertices $a_l^+$, $a_l^-$ are non-adjacent''. Indeed, the shortest path between the ``old'' vertices $a_s^+$, $a_s^-$ has length two, and the shortest path between the newly added vertices $a_{i+1}^+$, $a_{i+1}^-$ is at least two. This shows that at each step we cannot create cycles of lengths two and three. Therefore the link $L$ has no cycles of length less than four.
\end{proof}

\begin{cor}
Groups $G_{m,k}$ are CAT$(0)$.
\end{cor}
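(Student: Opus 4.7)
The plan is to deduce this corollary directly from the preceding Proposition~\ref{prop:cat0} by applying the standard recipe for producing CAT(0) groups from non-positively curved piecewise euclidean complexes.

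First I would recall that $G_{m,k}$ is by construction the fundamental group of the presentation $2$--complex $K_{m,k}$. By Proposition~\ref{prop:cat0}, $K_{m,k}$ is a non-positively curved PE square complex with finitely many cells, hence compact. Its universal cover $\widetilde{K}_{m,k}$ inherits the PE metric and is again locally CAT(0), and it is simply connected by construction.

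Next I would invoke the Cartan--Hadamard theorem for geodesic metric spaces (see Bridson--Haefliger~\cite{BH}, Theorem~II.4.1, or its specialization to polyhedral complexes in Theorem~II.5.4): a simply connected, complete, locally CAT(0) space is globally CAT(0). Completeness of $\widetilde{K}_{m,k}$ follows from the fact that $K_{m,k}$ is a compact PE complex with finitely many isometry types of cells (Bridson--Haefliger, Theorem~I.7.50). Thus $\widetilde{K}_{m,k}$ is a CAT(0) space.

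Finally, the group $G_{m,k}$ acts on $\widetilde{K}_{m,k}$ by deck transformations, which are cellular isometries. Since $K_{m,k}$ is a compact complex, the action is proper and cocompact. Hence $G_{m,k}$ is a CAT(0) group, in the sense of admitting a proper cocompact action on a CAT(0) space. No step is really the main obstacle here; the work was already done in Proposition~\ref{prop:cat0}, and this corollary is just the standard packaging of local-to-global non-positive curvature combined with the deck action on the universal cover.
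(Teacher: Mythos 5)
Your proposal is correct and takes essentially the same approach as the paper: pass to the universal cover of the non-positively curved compact PE square complex $K_{m,k}$, apply the Cartan--Hadamard theorem to conclude it is CAT(0), and observe that $G_{m,k}$ acts on it by deck transformations properly and cocompactly. The paper's version is simply a terser rendering of the same argument.
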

\begin{proof}
Indeed, the universal cover $\widetilde K_{m,k}$ of non-positively curved square complex $K_{m,k}$ is a CAT($0$) complex and $G_{m,k}$ acts on it by isometries, properly discontinuously and cocompactly.
\end{proof}

\subsection{Free-by-cyclic structure}

Notice that all the relations of groups $G_{m,k}$ have the form: $a_i^{a_j}=a_l$. This implies that there exists a well-defined epimorphism $G_{m,k}\to\Z$, sending every $a_i$ to a fixed generator of $\Z$. This epimorphism can be realized geometrically by a circle-valued Morse function $f\colon K_{m,k}\to S^1$, which can be defined as follows. Consider a CW structure on $S^1$ consisting of one $0$--cell and one $1$--cell. Then $f$ takes the $0$--cell of $K_{m,k}$ to the $0$--cell of $S^1$, maps $1$--cells of $K_{m,k}$ map homeomorphically onto the target $1$--cell of $S^1$, and extends linearly over the $2$--cells. Here by `extends linearly' we mean that $f$ lifts to a map of the universal covers in the way depicted in the Figure~\ref{fig:morse}. (Note that, by the non-positive curvature, characteristic maps of cells lift to embeddings in the universal cover.)

\begin{figure}[!h]
\begin{tikzpicture}[scale=1.25]

\begin{scope}[]

\fill (1,0) circle (4/3pt);
\fill (1,2) circle (4/3pt);
\fill (0,1) circle (4/3pt);
\fill (2,1) circle (4/3pt);
\draw[->-=0.53] (1,0)--node[below left=1pt] {$a_{i+1}$}(0,1);
\draw[->-=0.53] (1,0)--node[below right=1pt] {$a_{i}$}(2,1);
\draw[->-=0.53] (0,1)--node[above left=1pt] {$a_{i}$}(1,2);
\draw[->-=0.53] (2,1)--node[above right=1pt] {$a_{i+1}$}(1,2);
\draw[->-=0.53,densely dashed] (0,1)--node[above=1pt] {$A_i$}(2,1);

\fill (4,0) circle (4/3pt);
\fill (4,2) circle (4/3pt);
\fill (3,1) circle (4/3pt);
\fill (5,1) circle (4/3pt);
\draw[->-=0.53] (4,0)--node[below left=1pt] {$a_{m+j+1}$}(3,1);
\draw[->-=0.53] (4,0)--node[below right=1pt] {$a_{j}$}(5,1);
\draw[->-=0.53] (3,1)--node[above left=1pt] {$a_{m+j}$}(4,2);
\draw[->-=0.53] (5,1)--node[above right=1pt] {$a_{m+j+1}$}(4,2);
\draw[->-=0.53,densely dashed] (3,1)--node[above=1pt] {$B_j$}(5,1);
\end{scope}
\draw[mapping=0.99] (6,1)--node[above=1pt] {$\tilde f$}(7,1);
\draw (8,2.5) node[right=1pt]{$\R$};
\draw (8,-0.5)--(8,2.5);
\fill (8,0) circle (1pt) node[right=1pt] {$-1$};
\fill (8,1) circle (1pt) node[right=1pt] {$0$};
\fill (8,2) circle (1pt) node[right=1pt] {$1$};

\end{tikzpicture}
\caption{\label{fig:morse} The Morse function on each $2$--cell and the preimage set of $0$.}
\end{figure}
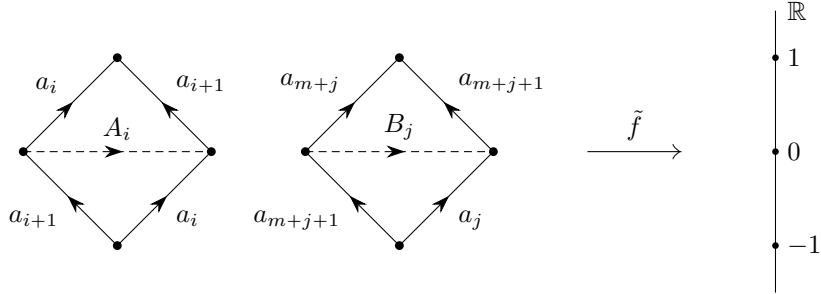

\begin{prop}\label{prop:morse}
The (circle-valued) Morse function $f\colon K_{m,k}\to S^1$ induces a short exact sequence
\[
1\longrightarrow F_{m+k}\longrightarrow G_{m,k}\longrightarrow \Z\longrightarrow 1,
\]
where $F_{m+k}$ is a free group of rank $m+k$.
\end{prop}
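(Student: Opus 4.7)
The plan is to apply Theorem~\ref{thm:fbc} to the circle-valued Morse function $f\colon K_{m,k}\to S^1$. Once the ascending and descending links at the unique $0$-cell $v$ are shown to be trees, asphericity of $K_{m,k}$ and the short exact sequence follow, leaving only the rank of the kernel $\pi_1(f^{-1}(\mathrm{pt}))$ to pin down.

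To analyze the ascending link at $v$: each $1$-cell contributes one vertex $a_i^-$, giving $m+k+1$ vertices total. Each lifted $2$-cell is a square on which $\tilde f$ takes corner values $0,1,2,1$ (as one reads off from Figure~\ref{fig:morse}), so exactly one corner is a strict minimum and contributes a single edge to the ascending link, namely the edge joining the two vertices corresponding to the two outgoing $1$-cells at that corner. For the commutator $2$-cell $[a_i,a_{i+1}]$ this produces the edge $\{a_i^-,a_{i+1}^-\}$; for the conjugation $2$-cell with relator $a_{m+j+1}^{-1}a_ja_{m+j+1}a_{m+j}^{-1}$ the unique min-corner has outgoing edges $a_j$ and $a_{m+j+1}$, producing the edge $\{a_j^-,a_{m+j+1}^-\}$. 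The resulting graph has $m+k+1$ vertices and $m+k$ edges; the commutator edges form the path $a_1^-,a_2^-,\dots,a_{m+1}^-$, and each conjugation edge attaches $a_{m+j+1}^-$ as a pendant leaf at $a_j^-$. So the ascending link is connected with $\chi=1$, hence a tree.

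The descending link is handled in exactly the same way: the max-corner of each commutator $2$-cell contributes $\{a_i^+,a_{i+1}^+\}$, and the max-corner of each conjugation $2$-cell (whose incoming edges there are $a_{m+j+1}$ and $a_{m+j}$) contributes $\{a_{m+j+1}^+,a_{m+j}^+\}$. These edges fit together into the single path $a_1^+,\dots,a_{m+1}^+,a_{m+2}^+,\dots,a_{m+k+1}^+$, again a tree.

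Theorem~\ref{thm:fbc} then gives asphericity and the short exact sequence with kernel $\pi_1(f^{-1}(\mathrm{pt}))$. To identify the rank I would pick a regular value $\mathrm{pt}\in S^1$: each $1$-cell contributes one vertex (its midpoint), and the preimage of $\mathrm{pt}$ in each $2$-cell consists of two parallel antidiagonal segments, so the fiber has $m+k+1$ vertices and $2(m+k)$ edges. A quick check of connectivity (the commutator edges connect $a_1,\dots,a_{m+1}$, and each conjugation cell attaches $a_{m+j+1}$ to one of the previous vertices via an edge to $a_j$) then gives $\pi_1(f^{-1}(\mathrm{pt}))\cong F_{m+k}$. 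The only real work is the link bookkeeping described above; there is no conceptual obstacle.
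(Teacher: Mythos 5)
Your proof is correct and follows essentially the same route as the paper: verify via Theorem~\ref{thm:fbc} that the ascending and descending links at the unique $0$-cell are trees, then read off the rank of the kernel from the fiber. The only small difference is that you compute $\pi_1(f^{-1}(\mathrm{pt}))$ from a regular value (giving $m+k+1$ vertices and $2(m+k)$ edges, hence Euler characteristic $1-(m+k)$), whereas the paper takes the preimage of the $0$-cell, which is immediately a bouquet of $m+k$ diagonal loops; both give $F_{m+k}$.
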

\begin{proof}
By Theorem~\ref{thm:fbc} it suffices to show that the ascending and the descending links of the $0$--cell in $K_{m,k}$ are trees.  

The ascending link of the $0$--cell of $K_{m,k}$ is formed by those corners of $2$--cells of $K_{m,k}$ which are formed by a pair of originating edges (labeled $a_\bullet^-$ in Figure~\ref{fig:lk1}). Similarly, the descending link of the $0$--cell of $K_{m,k}$ is formed by those corners of $2$--cells of $K_{m,k}$ which are formed by a pair of terminating edges (labeled $a_\bullet^+$, in Figure~\ref{fig:lk1}).

\begin{figure}[h!]
\begin{tikzpicture}[scale=1.35]

\begin{scope}[thick]
\fill (0,1) circle (4/3pt) node[above=1pt] {$a_1^-$};
\fill (1,1) circle (4/3pt) node[above=1pt] {$a_2^-$};
\fill (2,1) circle (4/3pt) node[above=1pt] {$a_3^-$};
\fill (0,0) circle (4/3pt) node[below=1pt] {$a_{m+2}^-$};
\fill (1,0) circle (4/3pt) node[below=1pt] {$a_{m+3}^-$};
\fill (2,0) circle (4/3pt) node[below=1pt] {$a_{m+4}^-$};
\draw (0,1)--(1,1);
\draw (1,1)--(2,1);
\draw (0,1)--(0,0);
\draw (1,1)--(1,0);
\draw (2,1)--(2,0);
\draw (2,1)--(2.33,1);
\draw (2.8,1) node {{\bf\ldots}};
\draw (3.16,1)--(3.5,1);
\fill (3.5,1) circle (4/3pt) node[above=1pt] {$a_{k}^-$};
\fill (3.5,0) circle (4/3pt) node[below=1pt] {$a_{m+k+1}^-$};
\draw (3.5,1)--(3.5,0);
\fill (4.5,1) circle (4/3pt) node[above=1pt] {$a_{k+1}^-$};
\draw (3.5,1)--(4.5,1);
\draw (4.5,1)--(4.83,1);
\draw (5.3,1) node {{\bf\ldots}};
\draw (5.66,1)--(6,1);
\fill (6,1) circle (4/3pt) node[above=1pt] {$a_{m}^-$};
\draw (6,1)--(7,1);
\fill (7,1) circle (4/3pt) node[above=1pt] {$a_{m+1}^-$};

\end{scope}
\end{tikzpicture}

\vspace{1cm}

\begin{tikzpicture}[scale=1.35]

\begin{scope}[thick]

\fill (0,0) circle (4/3pt) node[below=1pt] {$a_1^+$};
\draw (0,0)--(1,0);
\fill (1,0) circle (4/3pt) node[below=1pt] {$a_2^+$};
\draw (1,0)--(2,0);
\fill (2,0) circle (4/3pt) node[below=1pt] {$a_3^+$};
\draw (2,0)--(2.33,0);
\draw (2.8,0) node {{\bf\ldots}};
\draw (3.16,0)--(3.5,0);
\fill (3.5,0) circle (4/3pt) node[below=1pt] {$a_{m}^+$};
\draw (3.5,0)--(4.5,0);
\fill (4.5,0) circle (4/3pt) node[below=1pt] {$a_{m+1}^+$};

\draw (4.5,0)--(5.5,0);
\fill (5.5,0) circle (4/3pt) node[below=1pt] {$a_{m+2}^+$};
\draw (5.5,0)--(5.83,0);
\draw (6.3,0) node {{\bf\ldots}};
\draw (6.66,0)--(7,0);
\fill (7,0) circle (4/3pt) node[below=1pt] {$a_{m+k}^+$};
\draw (7,0)--(8,0);
\fill (8,0) circle (4/3pt) node[below=1pt] {$a_{m+k+1}^+$};

\end{scope}
\end{tikzpicture}

\caption{The ascending and the descending links for the Morse function $f\colon K_{m,k}\to\R/\Z$.}
\label{fig:ascdesc}
\end{figure}

From Figure~\ref{fig:ascdesc} we observe that the ascending and the descending links of the $0$--cell of $K_{m,k}$ are indeed trees. By the definition of $f$, each $2$--cell of $K_{m,k}$ contributes its diagonal loop to $f^{-1}(\text{$0$--cell})$. Furthermore, $f^{-1}(\text{$0$--cell})$ is a bouquet of these diagonal loops. Hence $f^{-1}(\text{$0$--cell})$ is a graph having a single $0$--cell and $(m+k)$ $1$--cells which are denoted in the Figure~\ref{fig:morse} by $A_i$, $B_j$.
\end{proof}

The above Proposition implies that $G_{m,k}\cong F_{m+k}\rtimes_{\phi_{m,k}}\Z$ for some monodromy automorphism $\phi_{m,k}$. We shall determine explicitly the automorphism $\phi_{m,k}$ for a particular choice of basis for $F_{m+k}$. Let $A_i$, $B_j$ be the diagonals of the $2$--cells of $K_{m,k}$, as shown in Figure~\ref{fig:morse}. Note that they have the following expressions in the generators of $G_{m,k}$:
\[
A_i=a_{i+1}^{-1}a_i,\quad 1\le i\le m;\qquad B_j=a_{m+j+1}^{-1}a_j,\quad 1\le j\le k.
\]
\begin{prop}\label{prop:phi}
For $0\le k\le m$, $G_{m,k}$ has the following explicit free-by-cyclic structure: 
\[
G_{m,k}\cong F_{m+k}\rtimes_{\phi_{m,k}}\Z
\] 
where
\[
F_{m+k}=\la A_1,\dots,A_m,B_1,\dots,B_{k}\ra;\qquad \Z=\la a_1\ra
\]
and the monodromy automorphism $\phi_{m,k}$ acts as follows (here overbar denotes the inverse):
\begin{align*}
\quad\phi_{m,k}\colon\quad & A_1\longmapsto A_1\\
& A_2\longmapsto A_1\,(A_2)\,\ov{A}_1\\
& A_3\longmapsto A_1A_2\,(A_3)\,\ov{A}_2\ov{A}_1\\
&\dots\\
& A_{m}\longmapsto A_1A_2\ldots A_{m-1}\,(A_m)\,\ov{A}_{m-1}\ldots \ov{A}_1\\[1em]
&B_1\longmapsto A_1A_2\ldots A_m\,(B_1)\\
&B_2\longmapsto A_1A_2\ldots A_m\,(B_1B_2)\,\ov{A}_1\\
&B_3\longmapsto A_1A_2\ldots A_m\,(B_1B_2B_3)\,\ov{A}_2\ov{A}_1\\
&\dots\\
&B_{k}\longmapsto A_1A_2\ldots A_m\,(B_1B_2\ldots B_{k})\,\ov{A}_{k-1}\ov{A}_{k-2}\ldots\ov{A}_2\ov{A}_1.
\end{align*}
Furthermore, $\phi_{m,k}$ is the restriction of $\phi_{m,m}$ to $F_{m+k}$.
\end{prop}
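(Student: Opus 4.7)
The plan is to identify $\phi_{m,k}$ as conjugation by a distinguished element of $G_{m,k}$ and then rewrite the images of the free basis using only the LOG relations. By Proposition~\ref{prop:morse}, $G_{m,k}$ is free-by-cyclic with kernel $F_{m+k}=\pi_1(f^{-1}(\mathrm{pt}))$ freely generated by the $2$-cell diagonals $A_i = a_{i+1}^{-1} a_i$ and $B_j = a_{m+j+1}^{-1} a_j$. Since every generator $a_\bullet$ maps to the chosen generator of the $\Z$-quotient, one may take $a_1$ as the preferred lift, so $\phi_{m,k}(x) = a_1\,x\,a_1^{-1}$. The proof then reduces to rewriting $a_1 A_i a_1^{-1}$ and $a_1 B_j a_1^{-1}$ as words in the free basis.

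The key auxiliary identity I would establish first is
\[
c_i^{-1}\, a_1 = a_i \quad \text{in } G_{m,k}, \qquad 1\le i\le m+1,
\]
where $c_i := A_1 A_2 \cdots A_{i-1}$, proved by induction on $i$. The inductive step reduces to observing that $A_{i-1}^{-1} = a_{i-1}^{-1} a_i$, so $A_{i-1}^{-1} a_{i-1} = a_{i-1}^{-1} a_i a_{i-1}$, and the commutator relation $[a_{i-1}, a_i]=1$ collapses this to $a_i$. The specialization $i = m+1$ yields $A_1\cdots A_m = a_1 a_{m+1}^{-1}$, which I abbreviate by $D$.

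For the action on the $A_i$, write $a_1 = c_i a_i$; since $[a_i, a_{i+1}] = 1$ says precisely that $a_i$ commutes with $A_i = a_{i+1}^{-1} a_i$, we obtain $\phi_{m,k}(A_i) = c_i A_i c_i^{-1}$, matching the stated formula. For $\phi(B_j)$ the main step is a parallel induction on $j$ establishing
\[
D\cdot B_1 B_2 \cdots B_j = a_1\, a_{m+j+1}^{-1} \quad \text{in } G_{m,k},
\]
whose inductive step rewrites the HNN relation $a_{m+j+1}^{-1} a_j a_{m+j+1} = a_{m+j}$ as $a_{m+j}^{-1}\,B_j = a_{m+j+1}^{-1}$ in order to extend the telescope. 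Combined with $a_j a_1^{-1} = c_j^{-1}$ extracted from the first identity, this gives
\[
\phi_{m,k}(B_j) = \bigl(a_1 a_{m+j+1}^{-1}\bigr)\bigl(a_j a_1^{-1}\bigr) = (A_1\cdots A_m)(B_1\cdots B_j)(A_{j-1}^{-1}\cdots A_1^{-1}),
\]
again matching the stated formula.

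The ``furthermore'' clause follows by inspection: for each $i\le m$ and $j\le k$, the words $\phi_{m,m}(A_i)$ and $\phi_{m,m}(B_j)$ are already contained in $\la A_1,\ldots,A_m,B_1,\ldots,B_k\ra = F_{m+k}\le F_{2m}$, so $\phi_{m,m}$ stabilizes $F_{m+k}$ and restricts generator-by-generator to $\phi_{m,k}$. I anticipate no real obstacle beyond careful bookkeeping of the two families of relations (the $m$ square commutators and the $k$ HNN conjugations); the delicate point is matching the leftover conjugator on the right of each image $\phi_{m,k}(B_j)$ with $c_j^{-1}$, which is precisely what the auxiliary identity is designed to provide.
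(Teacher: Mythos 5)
Your proof is correct, and it takes a genuinely different route from the paper's. The paper's argument is geometric: having just observed (Proposition~\ref{prop:morse}) that the descending link of the $0$-cell of $K_{m,k}$ is a tree, the authors read the words $a_1a_{i+1}^{-1}$ and $a_ia_1^{-1}$ (resp.\ $a_1a_{m+j+1}^{-1}$ and $a_ja_1^{-1}$) directly off the unique paths in that tree, as displayed in Figures~\ref{fig:ai} and~\ref{fig:bj}, and these paths \emph{are} the words $A_1\cdots A_{i-1}$, $\ov{A}_{i-1}\cdots\ov{A}_1$, etc. Your version replaces the tree/figure bookkeeping with two purely algebraic telescoping identities, $c_i^{-1}a_1=a_i$ (from the square relations) and $D\,B_1\cdots B_j=a_1a_{m+j+1}^{-1}$ (from the HNN relations), each proved by induction. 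The paper's approach buys a shorter, conceptually transparent argument that sits naturally alongside the Morse-theoretic construction of the free kernel; yours buys a relation-by-relation verification that doesn't depend on interpreting the figures or on the descending-link machinery, and in that sense is more self-contained. Both correctly identify $\phi_{m,k}$ with conjugation by $a_1$, both use the commutator relations for the $A_i$ and the HNN relations for the $B_j$, and your handling of the ``furthermore'' clause (inspection of which generators appear in $\phi_{m,m}(A_i)$, $\phi_{m,m}(B_j)$) matches the paper's implicit observation.
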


\begin{proof}
In the proof of Proposition~\ref{prop:morse} it was shown that $F_{m+k}$ is freely generated by all elements $A_i$, $B_j$.

As a generator of the $\Z$ factor we are free to choose any element that maps to a generator of $\pi_1(S^1)$; without loss of generality, we may take $\Z=\la a_1\ra$.

To get the action of the monodromy automorphism $\phi$ on the generators $A_i$, $B_j$ of $F_{m+k}$ we need to compute the conjugations $a_1A_ia_1^{-1}$ and $a_1B_ja_1^{-1}$. That is, we need to find words in generators $A_i$, $B_j$ which are equal to $a_1A_ia_1^{-1}$ and $a_1B_ja_1^{-1}$ in $K_{m,k}$.

\begin{figure}[h!]
\begin{tikzpicture}[scale=1.0]
\fill (3,3) circle (4/3pt);
\fill (7,3) circle (4/3pt);

\fill (0,0) circle (4/3pt);
\fill (1,0) circle (4/3pt);
\fill (2,0) circle (4/3pt);
\fill (3,0) circle (4/3pt);
\fill (4,0) circle (4/3pt);
\fill (5,0) circle (4/3pt);
\fill (6,0) circle (4/3pt);
\fill (7,0) circle (4/3pt);
\fill (8,0) circle (4/3pt);
\fill (9,0) circle (4/3pt);
\begin{scope}[thick]
\draw[->-=0.53] (0,0)--node[left=2pt] {$a_{1}$}(3,3);
\draw[->-=0.53] (5,0)--node[right=2pt] {$a_{i+1}$}(3,3);
\draw[->-=0.53] (5,0)--node[left=2pt] {$a_{i}$}(7,3);
\draw[->-=0.53] (3,3)--node[above=1pt] {$A_i$}(7,3);
\draw[->-=0.53] (9,0)--node[right=2pt] {$a_{1}$}(7,3);

\draw[->-=0.6] (0,0)--node[below=1pt] {$A_{1}$}(1,0);
\draw[->-=0.6] (1,0)--node[below=1pt] {$A_{2}$}(2,0);
\draw (2.55,0) node {{\bf\ldots}};
\draw[->-=0.6] (3,0)--node[below=1pt] {$A_{i-1}$}(4,0);
\draw[->-=0.6] (4,0)--node[below=1pt] {$A_{i}$}(5,0);
\draw[->-=0.6] (6,0)--node[below=1pt] {$A_{i-1}$}(5,0);
\draw (6.55,0) node {{\bf\ldots}};
\draw[->-=0.6] (8,0)--node[below=1pt] {$A_{2}$}(7,0);
\draw[->-=0.6] (9,0)--node[below=1pt] {$A_{1}$}(8,0);

\end{scope}

\draw[->-=0.53] (1,0)--(3,3);\draw (1.35,0.9) node {$a_{2}$};
\draw[->-=0.53] (2,0)--(3,3);\draw (2.05,0.9) node {$a_{3}$};
\draw[->-=0.53] (3,0)--(3,3);\draw (3.35,0.9) node {$a_{i-1}$};
\draw[->-=0.53] (4,0)--(3,3);\draw (3.9,1.0) node {$a_{i}$};

\draw[->-=0.53] (6,0)--(7,3);\draw (5.9,0.7) node {$a_{i-1}$};
\draw[->-=0.53] (7,0)--(7,3);\draw (7.25,0.7) node {$a_{3}$};
\draw[->-=0.53] (8,0)--(7,3);\draw (8.0,0.7) node {$a_{2}$};

\end{tikzpicture}
\caption{\label{fig:ai} The action of the monodromy automorphism on $A_i$.}
\end{figure}
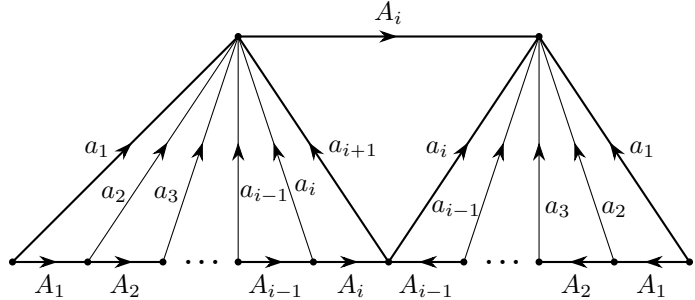

For $a_1A_ia_1^{-1}$, we start with the triangle having $A_i$ on top and $1$--cells $a_{i+1}$, $a_i$ forming two bottom sides. We would like to express $a_1a_{i+1}^{-1}$ and $a_ia_1^{-1}$ as products of free generators $A_i$, $B_j$. Since the descending link of the $0$--cell in $K_{m,k}$ is a tree, there exists a unique path in it connecting $a_1^+$ to $a_{i+1}^+$ and a unique path connecting $a_i^+$ to $a_1^+$. These paths correspond to paths $A_1A_2\dots A_{i-1}A_i$ and $\ov{A}_{i-1}\dots \ov{A}_2\ov{A}_1$, respectively, see Figure~\ref{fig:ai}. Thus, 
\[
a_1A_ia_1^{-1}= A_1A_2\ldots A_{i-1}\,(A_i)\,\ov{A}_{i-1}\ldots\ov{A}_2 \ov{A}_1.
\]

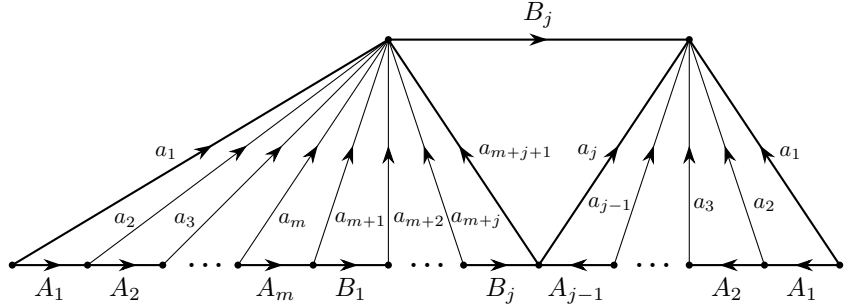
\begin{figure}[h!]
\begin{tikzpicture}[scale=1.0]
\fill (3,3) circle (4/3pt);
\fill (7,3) circle (4/3pt);

\fill (-2,0) circle (4/3pt);
\fill (-1,0) circle (4/3pt);
\fill (0,0) circle (4/3pt);
\fill (1,0) circle (4/3pt);
\fill (2,0) circle (4/3pt);
\fill (3,0) circle (4/3pt);
\fill (4,0) circle (4/3pt);
\fill (5,0) circle (4/3pt);
\fill (6,0) circle (4/3pt);
\fill (7,0) circle (4/3pt);
\fill (8,0) circle (4/3pt);
\fill (9,0) circle (4/3pt);
\begin{scope}[thick]
\draw[->-=0.53] (-2,0)--node[left=5pt] {\tinyb{$a_{1}$}}(3,3);
\draw[->-=0.53] (5,0)--node[right=2pt] {\tinyb{$a_{m+j+1}$}}(3,3);
\draw[->-=0.53] (5,0)--node[left=2pt] {\tinyb{$a_{j}$}}(7,3);
\draw[->-=0.53] (3,3)--node[above=1pt] {$B_j$}(7,3);
\draw[->-=0.53] (9,0)--node[right=2pt] {\tinyb{$a_{1}$}}(7,3);

\draw[->-=0.66] (-2,0)--node[below=1pt] {$A_{1}$}(-1,0);
\draw[->-=0.66] (-1,0)--node[below=1pt] {$A_{2}$}(0,0);
\draw (0.6,0) node {{\bf\ldots}};
\draw[->-=0.6] (1,0)--node[below=1pt] {$A_{m}$}(2,0);
\draw[->-=0.6] (2,0)--node[below=1pt] {$B_{1}$}(3,0);
\draw (3.55,0) node {{\bf\ldots}};
\draw[->-=0.6] (4,0)--node[below=1pt] {$B_{j}$}(5,0);
\draw[->-=0.6] (6,0)--node[below=1pt] {$A_{j-1}$}(5,0);
\draw (6.55,0) node {{\bf\ldots}};
\draw[->-=0.6] (8,0)--node[below=1pt] {$A_{2}$}(7,0);
\draw[->-=0.6] (9,0)--node[below=1pt] {$A_{1}$}(8,0);

\end{scope}

\draw[->-=0.53] (-1,0)--(3,3);\draw (-0.5,0.6) node {\tinyb{$a_{2}$}};
\draw[->-=0.53] (0,0)--(3,3);\draw (0.3,0.6) node {\tinyb{$a_{3}$}};
\draw[->-=0.53] (1,0)--(3,3);\draw (1.73,0.6) node {\tinyb{$a_{m}$}};
\draw[->-=0.53] (2,0)--(3,3);\draw (2.63,0.6) node {\tinyb{$a_{m+1}$}};
\draw[->-=0.53] (3,0)--(3,3);\draw (3.41,0.6) node {\tinyb{$a_{m+2}$}};
\draw[->-=0.53] (4,0)--(3,3);\draw (4.17,0.6) node {\tinyb{$a_{m+j}$}};

\draw[->-=0.53] (6,0)--(7,3);\draw (5.95,0.8) node {\tinyb{$a_{j-1}$}};
\draw[->-=0.53] (7,0)--(7,3);\draw (7.2,0.8) node {\tinyb{$a_{3}$}};
\draw[->-=0.53] (8,0)--(7,3);\draw (8.0,0.8) node {\tinyb{$a_{2}$}};

\end{tikzpicture}
\caption{\label{fig:bj} The action of the monodromy automorphism on $B_j$.}
\end{figure}

Similarly, for $a_1B_ja_1^{-1}$, we start with the triangle having $B_j$ on top and $a_{m+j+1}$, $a_j$ forming two bottom sides. Again, there are unique paths in the descending link from $a_1^+$ to $a_{m+j+1}^+$ and from $a_j^+$ to $a_1^+$. They correspond to words $A_1A_2\dots A_mB_1\dots B_j$ and $\ov{A}_{j-1}\ldots\ov{A}_2\ov{A}_1$, respectively, see Figure~\ref{fig:bj}. Hence,
\[
a_1B_ja_1^{-1}=A_1A_2\ldots A_m\,(B_1\ldots B_{j})\,\ov{A}_{j-1}\ldots\ov{A}_2\ov{A}_1.\qedhere
\]
\end{proof}

\section{Constructing a special cover for $G_{m,m}$}

In this section we construct a certain permutation representation for a group $G_{m,m}$ and show that it defines a finite cover for its presentation $2$--complex $K_{m,m}$, which can be embedded in an $(2m+1)$--dimensional torus. This allows us to construct a finite special cover for $K_{m,m}$, for all \emph{even} values of $m$.

\subsection{The permutation representation} We now define a right transitive action of $G_{m,m}$ on a certain set $H_{2m+1}$ of cardinality $2^{2m+1}$. 

\emph{Action set.} For any $n=1,\dots,2m+1$ denote $H_k$ to be the set of all tuples of length $k$ consisting of $0$'s and $1$'s, i.e. $H_n=\prod_{i=1}^n\{0,1\}$. There are natural inclusions 
\[
H_n\hookrightarrow H_{n+1},\quad (x_1,\dots,x_n)\mapsto (x_1,\dots,x_n,0), 
\]
and we identify $H_n$ with its image in $H_{n+1}$ under these inclusions. Also denote $H_n^*$ a subset of $H_{n+1}$ consisting of all tuples with the last coordinate~$1$:
\[
H_n^*=\{(x_1,\dots,x_n,1)\}\subset H_{n+1}.
\]
With the above identifications, we have $H_{n+1}=H_n\sqcup H_n^*$ (disjoint union). 

To define a \emph{right action} of a group $G$ on a set $X$, it suffices to associate to each $g\in G$ a permutation $\pi(g)$ of $X$ such that 
\[
\pi(gh)=\pi(h)\pi(g)\quad\text{for all $g,h\in G$.}
\]
Equivalently, a right action of $G$ on $X$ is a homomorphism of the opposite group $G^\circ$ to $\Sym(X)$, the group of all permutations of $X$, where $G^\circ$ equals $G$ as a set, with the new operation $\circ$ defined as
\[
a\circ b:=ba.
\]
We adopt the latter approach and construct the homomorphism from $G_{m,m}^\circ$ to $\Sym(H_{2m+1})$.

Recall that we have a natural tower of inclusions 
\[
G_{m,0}\subset G_{m,1}\subset \dots \subset  G_{m,m}.
\]
Since there are also inclusions
\[
H_{m+1}\subset H_{m+2}\subset \dots \subset H_{(m+1)+m}=H_{2m+1}
\]
this allows us to define the homomorphism $\pi\colon G_{m,m}^\circ\to \Sym(H_{2m+1})$ inductively by repeatedly extending the homomorphisms $G_{m,k-1}^\circ\to \Sym(H_{m+k})$ to $G_{m,k}^\circ\to \Sym(H_{m+k+1})$ for $k=1,\dots,m$ as follows.

\emph{Base of induction.} Let the $m+1$ generators $a_1$, \dots, $a_{m+1}$ of $G_{m,0}$ act on $H_{m+1}$ as flips in the respective coordinates, i.e. for $i=1,\dots,m+1$, set
\[\tag{$A_0$}
\pi(a_i)|_{H_{m+1}}:=\beta_i
\]
where $\beta_i\colon H_{2m+1}\to H_{2m+1}$ given by
\[
\beta_i(x_1,\dots,x_{i-1},x_i,x_{i+1},\dots)=(x_1,\dots,x_{i-1},1-x_i,x_{i+1},\dots)
\]
is the operator that changes the $i$-th coordinate from $0$ to $1$ and vice versa, fixing all others.

All the relations in $G_{m,0}$ (and $G_{m,0}^\circ$) are commutators $[a_i,a_{i+1}]=1$, $i=1,\dots,m$. Clearly, they are satisfied in $\Sym(H_{m+1})$ since operators $\beta_i$ pairwise commute. Thus we have a well-defined homomorphism $\pi\colon G_{m,0}^\circ\to \Sym(H_{m+1})$.

\emph{Inductive step}. For a fixed $k\in\{1,\dots,m\}$, suppose that $\pi\colon G_{m,k-1}^\circ\to\Sym(H_{m+k})$ is already defined. In particular, this implies that $H_{m+k}$ is invariant under $\pi(a_j)$ for all $j=1,\dots,m+k$. Also suppose that the following property holds:
\[\tag{$P_k$}
\text{for all $j=k,\dots,m$,\quad} \pi(a_j)|_{H_{m+k}}=\beta_j|_{H_{m+k}}.
\]
The base of induction above guarantees that these suppositions are true for $k=1$. Our goal is to extend the homomorphism $\pi|_{G_{m,k-1}^\circ}$ to $\pi\colon G_{m,k}^\circ\to\Sym(H_{m+k+1})$. Since $H_{m+k+1}=H_{m+k}\sqcup H_{m+k}^*$, it will suffice to define $\pi(a_j)|_{H_{m+k}^*}$ for $j=1,\dots,m+k$, and $\pi(a_{m+k+1})|_{H_{m+k}\sqcup H_{m+k}^*}$.

To this end, we set
\[\tag{$A_1$}
\pi(a_{m+k+1})|_{H_{m+k}\sqcup H_{m+k}^*}:=\beta_{m+k+1}
\]
and for all $1\leq j\leq m+k$,
\[\tag{$A_2$}
\pi(a_j)|_{H_{m+k}^*}:=\beta_{m+k+1}\cdot \varphi_{k,m+k}\cdot\pi(a_j)|_{H_{m+k}}\cdot \varphi_{k,m+k}\cdot \beta_{m+k+1},
\]
where $\cdot$ denotes the composition and $\varphi_{k,m+k}\colon H_{m+k+1}\to H_{m+k+1}$ is the involution that interchanges $k$-th and $(m+k)$-th coordinates leaving all other coordinates fixed:
\[
\varphi_{k,m+k}\colon (x_1,\dots,x_k,\dots,x_{m+k},x_{m+k+1})\mapsto (x_1,\dots,x_{m+k},\dots,x_k,x_{m+k+1}).
\]

In other words, we transfer the action of $G_{m,k-1}$ from $H_{m+k}$ to $H_{m+k}^*$ while twisting it with $\varphi_{k,m+k}$. Notice that, with the above definitions, both sets $H_{m+k}$ and $H_{m+k}^*$ are invariant under $\pi(a_j)$ for $j=1,\dots,m+k$. All the relations involving generators $a_1,\dots,a_{m+k}$ are satisfied on $H_{m+k+1}=H_{m+k}\sqcup H_{m+k}^*$ since  they hold true on $H_{m+k}$ and the conjugation by $\beta_{m+k+1}\cdot\nobreak\varphi_{k,m+k}$ is a homomorphism between permutation groups on $H_{m+k}$ and $H_{m+k}^*$.

The only relation in $G_{m,k}$ involving the last generator $a_{m+k+1}$ is 
\[
a_{m+k+1}^{-1}a_ka_{m+k+1}=a_{m+k},
\]
which translates to
\[
a_{m+k+1}\circ a_k\circ a_{m+k+1}^{-1}=a_{m+k}
\]
in $G_{m,k}^\circ$.

Since $\beta_{m+k+1}$ sends $H_{m+k}$ to $H_{m+k}^*$, the left-hand side of this relation acts on $H_{m+k}$ as follows:
\begin{align*}
&\pi(a_{m+k+1})\cdot\pi(a_k)\cdot\pi(a_{m+k+1}^{-1})|_{H_{m+k}}\\
&=\pi(a_{m+k+1})\cdot\pi(a_k)\cdot \beta_{m+k+1}|_{H_{m+k}}\\
&=\pi(a_{m+k+1})\cdot\pi(a_k)|_{H_{m+k}^*}\cdot \beta_{m+k+1}|_{H_{m+k}}\\
&=\pi(a_{m+k+1})\cdot (\beta_{m+k+1}\cdot \varphi_{k,m+k}\cdot\pi(a_k)|_{H_{m+k}}
\cdot \varphi_{k,m+k}\cdot \beta_{m+k+1})\cdot \beta_{m+k+1}|_{H_{m+k}}\\
&=\pi(a_{m+k+1})\cdot \beta_{m+k+1}\cdot \varphi_{k,m+k}\cdot\pi(a_k)|_{H_{m+k}}\cdot \varphi_{k,m+k}|_{H_{m+k}}\\
&=(\pi(a_{m+k+1})\cdot \beta_{m+k+1})\cdot \varphi_{k,m+k}\cdot\pi(a_k)|_{H_{m+k}}\cdot \varphi_{k,m+k}|_{H_{m+k}}\\
&=\id\cdot\,\varphi_{k,m+k}\cdot\pi(a_k)|_{H_{m+k}}\cdot \varphi_{k,m+k}|_{H_{m+k}}=\text{[by~($P_k$)]}\\
&=\varphi_{k,m+k}\cdot \beta_k|_{H_{m+k}}\cdot \varphi_{k,m+k}|_{H_{m+k}}=\beta_{m+k}|_{H_{m+k}}=\pi(a_{m+k})|_{H_{m+k}},
\end{align*}
where the last equality holds due to the inductive definition of $\pi$. Thus, the both sides of the above relation act the same on $H_{m+k}$.

Analogously, on $H_{m+k}^*$, the left-hand side acts as:
\begin{multline*}
\pi(a_{m+k+1})\cdot\pi(a_k)\cdot\pi(a_{m+k+1}^{-1})|_{H_{m+k}^*}
=\beta_{m+k+1}\cdot\pi(a_k)|_{H_{m+k}}\cdot \beta_{m+k+1}|_{H_{m+k}^*}\\
=\text{[by ($P_k$)]}=\beta_{m+k+1}\cdot \beta_k|_{H_{m+k}}\cdot \beta_{m+k+1}|_{H_{m+k}^*}=\beta_k|_{H_{m+k}^*},
\end{multline*}
and the right-hand side:
\begin{multline*}
\pi(a_{m+k})|_{H_{m+k}^*}=\beta_{m+k+1}\cdot \varphi_{k,m+k}\cdot\pi(a_{m+k})|_{H_{m+k}}\cdot \varphi_{k,m+k}\cdot \beta_{m+k+1}|_{H_{m+k}^*}\\
=\text{[by the inductive definition]}=\beta_{m+k+1}\cdot \varphi_{k,m+k}\cdot \beta_{m+k}\cdot \varphi_{k,m+k}\cdot \beta_{m+k+1}|_{H_{m+k}^*}\\
=\beta_{m+k+1}\cdot \beta_k\cdot \beta_{m+k+1}|_{H_{m+k}^*}=\beta_k|_{H_{m+k}^*}.
\end{multline*}
Since the two sides act the same on $H_{m+k}\sqcup H_{m+k}^*=H_{m+k+1}$, the above relation is satisfied in $\Sym(H_{m+k+1})$, which proves that $\pi$ is well-defined on $G_{m,k}^\circ$.

It remains to be proved that the auxiliary condition ($P_k$) is preserved under the inductive step, i.e. that ($P_k$) implies ($P_{k+1}$). 

Indeed, ($P_k$) means that $\pi(a_j)|_{H_{m+k}}=\beta_j|_{H_{m+k}}$ for $j=k,\dots,m$. Thus, for $j>k$,
\begin{multline*}
\pi(a_j)|_{H_{m+k}^*}=\beta_{m+k+1}\cdot \varphi_{k,m+k}\cdot\pi(a_j)|_{H_{m+k}}\cdot \varphi_{k,m+k}\cdot \beta_{m+k+1}|_{H_{m+k}^*}\\
=\beta_{m+k+1}\cdot\varphi_{k,m+k}\cdot \beta_j|_{H_{m+k}}\cdot \varphi_{k,m+k}\cdot \beta_{m+k+1}|_{H_{m+k}^*}\\
=\text{[since $j\ne k,\,m+k$]}=\beta_j|_{H_{m+k}^*}.
\end{multline*}
This proves that $\pi(a_j)|_{H_{m+k+1}}=\beta_j|_{H_{m+k+1}}$ for $j=k+1,\dots,m$, i.e. that ($P_{k+1}$) holds.

This finishes the inductive construction of the homomorphism 
\[
\pi\colon G_{m,m}^\circ\to \Sym(H_{2m+1})
\] 
and the proof that it is well-defined. Thus one gets a right action of $G_{m,m}$ on $H_{2m+1}$ which will also be denoted $\pi$.

Figure~\ref{fig:1} shows the permutation representation for $G_{2,2}$.


\newcommand{\hhh}{
\begin{picture}(0,0)
\put(1.6,0){\line(1,0){66.8}}\put(0,0){\circle{3}}
\put(0,1.6){\line(0,1){66.8}}
\put(1.3,1.3){\line(1,1){17.5}}
\put(21.6,20){\line(1,0){26.8}}\put(20,20){\circle{3}}
\put(20,21.6){\line(0,1){26.8}}
\put(21.6,50){\line(1,0){26.8}}\put(20,50){\circle{3}}
\put(50,21.6){\line(0,1){26.8}}\put(50,20){\circle{3}}
\put(1.6,70){\line(1,0){66.8}}\put(0,70){\circle{3}}
\put(70,1.6){\line(0,1){66.8}}\put(70,0){\circle{3}}
\put(1.3,68.7){\line(1,-1){17.5}}
\put(51.3,18.7){\line(1,-1){17.5}}\put(50,50){\circle{3}}
\put(51.3,51.3){\line(1,1){17.5}}\put(70,70){\circle{3}}
\end{picture}
}

\begin{figure}
\begin{center}
\unitlength=1.5pt
\begin{picture}(220,230)(0,-10)
\put(42.6,160){\circle*{3}}
\put(0,20){\hhh}
\put(120,0){\hhh}
\put(20,140){\hhh}
\put(140,120){\hhh}
\multiput(4.5,19.7)(70,0){2}{\line(6,-1){116.5}}
\multiput(4.5,89.7)(70,0){2}{\line(6,-1){116.5}}
\multiput(24.5,39.7)(30,0){2}{\line(6,-1){116.5}}
\multiput(24.5,69.7)(30,0){2}{\line(6,-1){116.5}}
\multiput(24.5,139.7)(70,0){2}{\line(6,-1){116.5}}
\multiput(24.5,209.7)(70,0){2}{\line(6,-1){116.5}}
\multiput(44.5,159.7)(30,0){2}{\line(6,-1){116.5}}
\multiput(44.5,189.7)(30,0){2}{\line(6,-1){116.5}}
\multiput(3,21.7)(70,0){2}{\line(1,6){19.4}}
\multiput(3,91.7)(70,0){2}{\line(1,6){19.4}}
\multiput(23,41.7)(30,0){2}{\line(1,6){19.4}}
\multiput(23,71.7)(30,0){2}{\line(1,6){19.4}}
\multiput(123,1.7)(70,0){2}{\line(1,6){19.4}}
\multiput(123,71.7)(70,0){2}{\line(1,6){19.4}}
\multiput(143,21.7)(30,0){2}{\line(1,6){19.4}}
\multiput(143,51.7)(30,0){2}{\line(1,6){19.4}}
\put(44,154){\tiny00000}
\put(74,161){\tiny10000}
\put(44,184){\tiny01000}
\put(76,191){\tiny11000}
\put(8.6,142){\tiny00100}
\put(95,141){\tiny10100}
\put(25,212){\tiny01100}
\put(95,211){\tiny11100}
\put(164,135){\tiny00010}
\put(194,141){\tiny10010}
\put(164,165){\tiny01010}
\put(195,169){\tiny11010}
\put(142.8,114){\tiny00110}
\put(215,121){\tiny10110}
\put(145,191){\tiny01110}
\put(215,191){\tiny11110}
\put(22,34){\tiny00001}
\put(54,41){\tiny10001}
\put(24,64){\tiny01001}
\put(55.5,70){\tiny11001}
\put(-13,22){\tiny00101}
\put(74,21){\tiny10101}
\put(-13,92){\tiny01101}
\put(74,92){\tiny11101}
\put(142,14){\tiny00011}
\put(174,21){\tiny10011}
\put(144,45){\tiny01011}
\put(175,49){\tiny11011}
\put(122.8,-6){\tiny00111}
\put(195,1){\tiny10111}
\put(123.5,72){\tiny01111}
\put(194,72){\tiny11111}
\put(54,162){\it1}\put(54,142){\it1}\put(54,192){\it1}\put(54,212){\it1}
\put(44,172){\it2}\put(24,172){\it2}\put(74,172){\it2}\put(94,166){\it2}
\put(29,151){\it3}\put(29,193){\it3}\put(74,195){\it3}\put(76,147){\it3}
\put(174,142){\it3}\put(170,113){\it3}\put(174,172){\it3}\put(178,183){\it3}
\put(164,152){\it2}\put(144,156){\it2}\put(194,152){\it2}\put(214,152){\it2}
\put(149,131){\it1}\put(155,179){\it1}\put(196,177){\it1}\put(196,127){\it1}
\put(127,207){\it4}\put(117,197){\it4}\put(118,185){\it4}\put(118,170){\it4}
\put(118,155){\it4}\put(118,140){\it4}\put(116,129){\it4}\put(109,118){\it4}
\put(36,42){\it1}\put(35,22){\it1}\put(38,72){\it3}\put(40,92){\it3}
\put(17,52){\it4}\put(-3,54){\it4}\put(47,52){\it4}\put(67,50){\it4}
\put(9,31){\it3}\put(6,77){\it1}\put(63,84){\it1}\put(56,27){\it3}
\put(155,22){\it1}\put(154,-7){\it1}\put(157,51){\it3}\put(162,64){\it3}
\put(137,34){\it4}\put(117,39){\it4}\put(167,33){\it4}\put(187,34){\it4}
\put(130,12){\it3}\put(135,59){\it1}\put(180,61){\it1}\put(176,7){\it3}
\put(104,87){\it2}\put(100,76){\it2}\put(98,65){\it2}\put(98,50){\it2}
\put(98,35){\it2}\put(98,20){\it2}\put(96,9){\it2}\put(86,-2){\it2}
\put(2,117){\it5}\put(14,115){\it5}\put(26,113){\it5}\put(36,112){\it5}
\put(55,111){\it5}\put(65,110){\it5}\put(77,108){\it5}\put(89,105){\it5}
\put(122,98){\it5}\put(134,95){\it5}\put(146,93){\it5}\put(156,92){\it5}
\put(175,91){\it5}\put(185,90){\it5}\put(197,88){\it5}\put(209,86){\it5}
\end{picture}
\unitlength=1pt
\end{center}
\caption{The case of $m=2$, $k=2$: the action of $G_{2,2}=\la a_1,a_2,a_3,a_4,a_5\mid [a_1,a_2]=1, [a_2,a_3]=1, a_4^{-1}a_1a_4=a_3, a_5^{-1}a_2a_5=a_4\ra$ on $H_5$. Elements of $H_5$ are arranged at vertices of the hypercube graph marked with the corresponding tuples of 0,1's. (Thus, each edge of this graph corresponds to a pair of opposite edges in the $1$--skeleton of the $5$--dimensional torus $\T_5$ defined below.) The subset $H_3$ is represented by the upper left-hand corner subgraph, and $H_4$ by the upper half of the picture. 
If $\pi(a_i)$ interchanges vertices $u$ and $v$ we mark the edge $uv$ with the italicized digit $i$. }
\label{fig:1}
\end{figure}
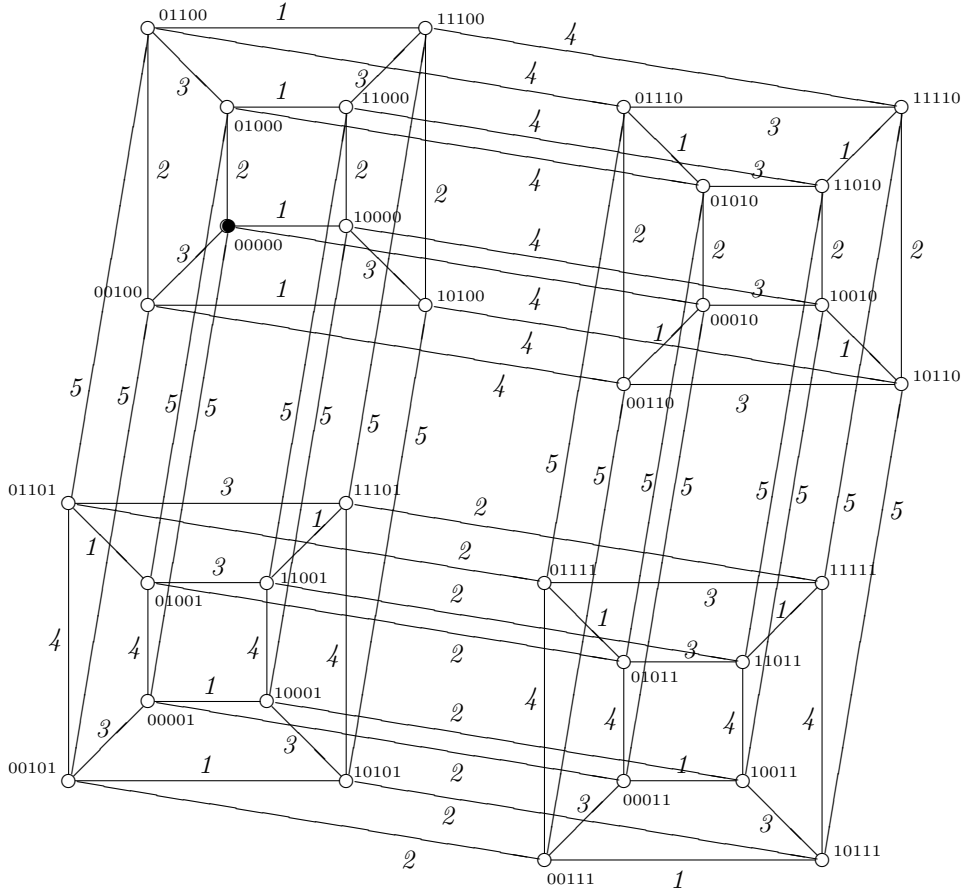

\begin{prop}\label{prop:c1}
The right action $\pi$ of $G_{m,m}$ on $H_{2m+1}$, defined above, has the following properties:
\begin{enumerate}
\item $G_{m,k}$ acts transitively on $H_{m+k+1}$ for all $k=0,\dots,m$.
\item Each generator $a_i$, $i=1,\dots,2m+1$, acts as an involution on $H_{2m+1}$.
\item For any $v\in H_{2m+1}$, and any $a_i$, $i=1,\dots,2m+1$, $\pi(a_i)v$ differs from $v$ in exactly one coordinate. In particular, $\pi(a_i)$ has no fixpoints.
\item For any $i\ne j$, $\pi(a_ia_j)$ has no fixpoints.
\end{enumerate}
\end{prop}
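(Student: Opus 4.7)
The plan is to prove all four properties simultaneously by induction on $k$, mirroring the inductive construction of $\pi\colon G_{m,k}^\circ\to\Sym(H_{m+k+1})$. The key idea is to strengthen property~(3) to the following more refined inductive claim: \emph{for each $v\in H_{m+k+1}$ and each $j\in\{1,\dots,m+k+1\}$, there is a unique coordinate $c(j,v)\in\{1,\dots,m+k+1\}$ in which $\pi(a_j)v$ and $v$ differ, and the map $j\mapsto c(j,v)$ is injective.} This refined claim automatically gives~(3) (no fixed points), and combined with~(2) yields~(4): if $\pi(a_ia_j)v=\pi(a_j)\pi(a_i)v=v$, then $\pi(a_i)v=\pi(a_j)^{-1}v=\pi(a_j)v$, which is impossible when $c(i,v)\ne c(j,v)$.

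Properties (1) and (2) come out cleanly from the inductive step. For~(1), the base case $G_{m,0}$ acts on $H_{m+1}$ via the coordinate flips $\beta_1,\dots,\beta_{m+1}$, which visibly generate a transitive action; for the inductive step, $\pi(a_{m+k+1})=\beta_{m+k+1}$ interchanges $H_{m+k}$ and $H_{m+k}^*$, so transitivity of $G_{m,k-1}$ on $H_{m+k}$ upgrades to transitivity of $G_{m,k}$ on $H_{m+k+1}$. For~(2), the base case is immediate; in the inductive step, $\pi(a_{m+k+1})=\beta_{m+k+1}$ is a coordinate flip and hence an involution, while for $j\le m+k$ formula $(A_2)$ exhibits $\pi(a_j)|_{H_{m+k}^*}$ as the conjugate of the inductively involutive $\pi(a_j)|_{H_{m+k}}$ by $\beta_{m+k+1}\cdot\varphi_{k,m+k}$, which is itself an involution (the product of two commuting involutions), and the invariance of each of $H_{m+k}$, $H_{m+k}^*$ under $\pi(a_j)$ patches the two restrictions into an involution of $H_{m+k+1}$.

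For the strengthened (3)+(4), the base case holds because $c(j,v)=j$. In the inductive step, fix $v\in H_{m+k+1}$. If $v\in H_{m+k}$, then for $j\le m+k$ the value $c(j,v)\in\{1,\dots,m+k\}$ is inherited from the $H_{m+k}$ action and the values are distinct by induction, while $c(m+k+1,v)=m+k+1$ is automatically distinct from all of these. If $v\in H_{m+k}^*$, write $v=\beta_{m+k+1}u$ with $u\in H_{m+k}$. Reading $(A_2)$ right to left, the flip at coordinate $c(j,\varphi_{k,m+k}u)\in\{1,\dots,m+k\}$ produced by $\pi(a_j)|_{H_{m+k}}$ is pushed through the bijection $\beta_{m+k+1}\cdot\varphi_{k,m+k}$ and emerges as a single flip at coordinate $\varphi_{k,m+k}\bigl(c(j,\varphi_{k,m+k}u)\bigr)$, where $\varphi_{k,m+k}$ is now read as a permutation of coordinate indices swapping $k$ and $m+k$; coordinate $m+k+1$ is flipped once at the start and once at the end, so is unaffected overall, and all coordinates outside $\{k,m+k,m+k+1\}$ pass through unchanged. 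Since $\varphi_{k,m+k}$ permutes $\{1,\dots,m+k\}$, the inductive injectivity of $j\mapsto c(j,\varphi_{k,m+k}u)$ on $\{1,\dots,m+k\}$ transfers to injectivity of $j\mapsto c(j,v)$ on the same range, and adjoining $c(m+k+1,v)=m+k+1$ preserves injectivity.

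The principal obstacle is really just the careful bookkeeping in this last step: one must verify both that exactly one coordinate is flipped (not more) and that the flipped coordinate remains inside $\{1,\dots,m+k\}$ for $j\le m+k$, so it cannot coincide with $c(m+k+1,v)=m+k+1$. Both points rest on the facts that $\varphi_{k,m+k}$ transposes two indices inside $\{1,\dots,m+k\}$ and that the two outer occurrences of $\beta_{m+k+1}$ in~$(A_2)$ cancel on coordinate $m+k+1$.
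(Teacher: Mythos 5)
Your proof is correct, and it takes a genuinely different route to item~(4) than the paper does. The paper establishes (2) and (3) by a terse induction from $(A_0)$--$(A_2)$, then proves (4) by a separate two-case induction: for $1\le i,j\le m+k$, a fixed point of $\pi(a_ia_j)$ in $H_{m+k}^*$ would pull back through $\varphi_{k,m+k}\cdot\beta_{m+k+1}$ to a fixed point in $H_{m+k}$, contradicting the inductive hypothesis, while if one of $i,j$ equals $m+k+1$, then $\pi(a_{m+k+1})$ flips the last coordinate and the other generator preserves the partition $H_{m+k}\sqcup H_{m+k}^*$, so the composition is fixed-point free. Your innovation is to strengthen (3) to the injectivity of $j\mapsto c(j,v)$ for each fixed $v$, from which (4) falls out immediately as a corollary of (2). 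The bookkeeping you flag as the main obstacle is handled correctly: the map $\beta_{m+k+1}\varphi_{k,m+k}$ carries a difference at coordinate index $\ell$ to a difference at index $\tau(\ell)$, where $\tau$ is the transposition of $k$ and $m+k$ fixing $m+k+1$, so injectivity transfers, the image stays inside $\{1,\dots,m+k\}$ for $j\le m+k$, and $c(m+k+1,v)=m+k+1$ remains disjoint. What your approach buys is the stronger structural fact that $\{c(j,v)\colon 1\le j\le m+k+1\}=\{1,\dots,m+k+1\}$ for every $v$ -- a fact implicit in the paper's Figure~\ref{fig:1} and essentially what drives the cellular embedding of $\widehat K_m$ into $\T_{2m+1}$ proved later; the cost is a heavier inductive hypothesis, though the resulting argument is pleasingly uniform across cases.
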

\begin{proof}
(1) An easy induction. The case $k=1$ is obvious since $G_{m,0}$ acts on $H_{m+1}$ by coordinate flips. So one can start with any $(m+1)$--tuple of 0,1's and obtain any other $(m+1)$--tuple by changing one coordinate at a time. Suppose now that $G_{m,k-1}$ acts transitively on $H_{m+k}$. Then by~($A_2$), $H^*_{m+k}$ comprises another orbit for $G_{m,k-1}$ and $a_{m+k+1}$ glues $H_{m+k}$ and $H^*_{m+k}$ into one orbit for $G_{m,k}$ by~($A_1$) thus proving that $G_{m,k}$ is transitive on $H_{m+k+1}=H_{m+k}\sqcup H_{m+k}^*$.

(2),(3) Follow by induction from formulas ($A_0$)--($A_2$).

(4) Again, this is obvious for $a_i$, $a_j$ with $1\le i,j\le m+1$ acting on $H_{m+1}$ since they act as different coordinate flips $\beta_i$, $\beta_j$. Suppose that the statement is proven for some $k\in\{1,\dots,m+1\}$, for all $a_i$, $a_j$, $1\le i,j\le m+k$ acting on $H_{m+k}$. Then $\pi(a_ia_j)$ has no fixpoints on $H^*_{m+k}$ either, since otherwise if $v\in H^*_{m+k}$ is such a fixpoint, then by~($A_2$), $\varphi_{k,m+k}\cdot\beta_{m+k+1}(v)$ would be a fixpoint for $\pi(a_ia_j)$ in $H_{m+k}$. Finally, if, say, $i=m+k+1$ then $\pi(a_i)$ changes the last, $(m+k+1)$-st, coordinate on $H_{m+k+1}$, whereas for $j<i$, $\pi(a_j)$ preserves both subsets $H_{m+k}$ and $H_{m+k}^*$, so it doesn't change the $(m+k+1)$-st coordinate. Hence, the composition of $\pi(a_j)$ and $\pi(a_i)$ has no fixpoints.
\end{proof}

\subsection{A $(2m+1)$--torus cover} 

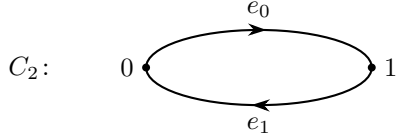
\begin{figure}[!h] 
\begin{tikzpicture}[scale=1.0]
\begin{scope}[thick]
\fill (0,0) circle (1.5pt) node[left=1pt] {$0$};
\fill (3,0) circle (1.5pt) node[right=1pt] {$1$};
\draw[->-=0.53] (0,0) arc (180:0:1.5cm and 0.5cm); 
\draw (1.5,0.5) node[above=1pt] {$e_0$};
\draw[->-=0.53] (3,0) arc (360:180:1.5cm and 0.5cm);
\draw (1.5,-0.5) node[below=1pt] {$e_1$};
\draw (-1,0) node[left=1pt] {$C_2\colon$};
\end{scope}
\end{tikzpicture}
\caption{The CW complex $C_2$.\label{fig:c2}}
\end{figure}

Let $C_2=\R/2\Z$ be a $1$--dimensional CW complex with the following CW structure: its $0$--cells are $0 + {2\Z}$ and $1+ {2\Z}$, which we denote by $0$ and $1$ respectively. The $1$--cells are $[0,1] + {2\Z}$ and $[1,2]+{2\Z}$, which we denote by $e_0$ and $e_1$ respectively, see Figure~\ref{fig:c2}. 

We denote by $\T_n$ the CW complex $\R^n/(2\Z)^n\cong(\R/2\Z)^n$ with the product CW structure. Notice that the natural action of $(2\Z)^n$ on $\R^n$ preserves the standard unit cubulation of $\R^n$, hence induces the structure of a cubical complex on $\T_n$. Observe that 
$\T_n$ is homeomorphic to an $n$--dimensional torus.

In what follows, it will be convenient to parametrize points of $\T_n$ by $n$--tuples $(x_1,\dots,x_n)$ of numbers from $[0,2]$ viewed up to the identification $0\sim2$.

The $0$--skeleton of $\T_n$ is naturally identified with the set $H_n$ of all $n$-tuples of $\{0,1\}$ introduced before.

The $1$--cells of $\T_n$ are formed by fixing an edge $e_0$ or $e_1$ in some factor of $\T_n=C_2\times C_2\times\dots \times C_2$, say, in position $i$, and taking product with vertices $0$ or $1$ in all other positions. 
(So if two $0$--cells of $\T_0$ differ in only one coordinate, then there is a unique directed edge in $\T_n^{(1)}$ from the first $0$--cell to the second one and a unique directed edge from the second one to the first one.) Thus, a typical $1$--cell in $\T_n$ can be identified with a product of the form 
\[
v_1\times v_2\times \dots\times v_{i-1}\times e_\alpha\times v_{i+1}\times\dots\times v_n,
\] 
where each $v_j\in\{0,1\}$ and $\alpha=0$ or $1$.

Similarly, an arbitrary $2$--cell of $\T_{n}$ is a product 
\[
v_1\times \dots\times v_{i-1}\times e_\alpha\times v_{i+1}\times\dots\times v_{j-1}\times e_\beta\times v_{j+1}\times\dots\times v_{n}
\] 
for some choice of $1\le i, j\le n$ ($i\ne j$), with each $v_k\in\{0,1\}$, and $\alpha,\beta\in\{0,1\}$.

Let $K_{m,m}$ be the presentation $2$--complex for $G_{m,m}$. Recall that it consists of one $0$--cell, $(2m+1)$ $1$--cells corresponding to the generators $a_1,\dots,a_{2m+1}$ of $G_{m,m}$ and $2m$ $2$--cells corresponding to the relations of $G_{m,m}$. 

Consider the right action $\pi\colon G_{m,m}^\circ\to\Sym(H_{2m+1})$ constructed in the previous section and denote
\[
S=\{g\in G_{m,m}\mid \pi(g)(0,0,\dots,0)=(0,0,\dots,0)\}
\]
the stabilizer of the point $(0,0,\dots,0)$ in $G_{m,m}$. Subgroup $S$ defines a finite covering $\widehat K_m\to K_{m,m}$ whose properties we now describe.

\begin{prop}\label{prop:emb}
The covering space $\widehat K_m$ cellularly embeds into the $2$--skeleton of $\T_{2m+1}$.
\end{prop}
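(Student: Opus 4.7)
The plan is to construct a cellular embedding $\iota\colon \widehat K_m \hookrightarrow \T_{2m+1}^{(2)}$ skeleton by skeleton. By the transitivity of $\pi$ on $H_{2m+1}$ (Proposition~\ref{prop:c1}(1)) and orbit--stabilizer, $\widehat K_m^{(0)}$ is naturally identified with $H_{2m+1}=\T_{2m+1}^{(0)}$, and I take this identification as $\iota$ on $0$-cells. For each $v\in H_{2m+1}$ and generator $a_i$, Proposition~\ref{prop:c1}(3) supplies the unique coordinate $c(a_i,v)\in\{1,\ldots,2m+1\}$ in which $v$ and $\pi(a_i)v$ differ. A key preliminary observation is that $i\mapsto c(a_i,v)$ is a bijection of $\{1,\ldots,2m+1\}$: if $c(a_i,v)=c(a_j,v)$ for some $i\ne j$ then $\pi(a_i)v=\pi(a_j)v$, so $\pi(a_j)\pi(a_i)v=\pi(a_j)^2v=v$ using Proposition~\ref{prop:c1}(2), i.e.\ $\pi(a_ia_j)v=v$, contradicting Proposition~\ref{prop:c1}(4). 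Each $1$-cell $[v,\pi(a_i)v]$ of $\widehat K_m$ then maps to the unique edge of $\T_{2m+1}$ joining these two vertices, and the bijection above makes $\iota$ an isomorphism of $1$-skeleta $\widehat K_m^{(1)}\xrightarrow{\sim}\T_{2m+1}^{(1)}$.

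The main task is extending $\iota$ over $2$-cells, for which I will show that the four corners of every lifted $2$-cell span a coordinate $2$-cube of $\T_{2m+1}^{(2)}$. Identify $H_{2m+1}$ with $\mathbb{F}_2^{2m+1}$ so that the coordinate flip $\beta_k$ is addition of $e_k$. The strategy is identical for both relation types: the group-theoretic relation produces two expressions for the diagonal corner, equating them in $\mathbb{F}_2^{2m+1}$ yields an identity $e_\alpha+e_\beta=e_\gamma+e_\delta$; of the three possible matchings of these four indices, two force $\pi(a_pa_q)v=v$ for the two generators $a_p,a_q$ at the base corner and are ruled out by Proposition~\ref{prop:c1}(4), leaving the ``parallel edges'' matching. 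Concretely, for a commutator lift $[a_i,a_{i+1}]$ at $v$, the commutativity $\pi(a_i)\pi(a_{i+1})=\pi(a_{i+1})\pi(a_i)$ gives
\begin{equation*}
e_{c(a_i,v)}+e_{c(a_{i+1},\pi(a_i)v)}\ =\ e_{c(a_{i+1},v)}+e_{c(a_i,\pi(a_{i+1})v)},
\end{equation*}
and the surviving matching forces $c(a_i,v)=c(a_i,\pi(a_{i+1})v)$ and $c(a_{i+1},v)=c(a_{i+1},\pi(a_i)v)$, so the four corners span the coordinate square in directions $c(a_i,v),c(a_{i+1},v)$. The HNN lift $a_{m+j+1}^{-1}a_ja_{m+j+1}a_{m+j}^{-1}$ based at $v$ is treated identically, using the identity $\pi(a_{m+j+1})\pi(a_j)=\pi(a_{m+j})\pi(a_{m+j+1})$ arising from the relation; the surviving matching yields the coordinate square in directions $c(a_j,v),c(a_{m+j+1},v)$.

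To conclude injectivity of $\iota$ on $2$-cells, note that a coordinate $2$-cube in $\T_{2m+1}^{(2)}$ is determined by its vertex set, so it suffices to show distinct $2$-cells of $\widehat K_m$ have distinct corner sets. At a fixed vertex $v_0$, each $2$-cell containing $v_0$ determines a wedge of two generators at $v_0$ (the generator labels of its two adjacent edges), and an inspection of the commutator and HNN attaching words shows each unordered pair of generators appears adjacently in at most one relation of $K_{m,m}$; the bijection $i\mapsto c(a_i,v_0)$ then converts distinct wedges at $v_0$ into distinct coordinate direction pairs, hence into distinct $2$-cubes of $\T_{2m+1}^{(2)}$. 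The main obstacle in executing the plan is the $2$-cell analysis, where one must extract the ``parallel edges'' condition from the $\mathbb{F}_2$-identity by using Proposition~\ref{prop:c1}(4) to rule out both degenerate matchings of basis-vector indices.
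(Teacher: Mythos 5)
Your main construction coincides with the paper's proof: the identification of $\widehat K_m^{(0)}$ with $H_{2m+1}$ via transitivity, the use of Proposition~\ref{prop:c1}(3) to send each lifted edge to an edge of $\T_{2m+1}$, and the $2$--cell analysis in which the two ``degenerate matchings'' are excluded by Proposition~\ref{prop:c1}(2),(4) is exactly the paper's argument with the positions $p,q,r,s$ (there phrased as $p\ne q$, $r\ne s$, $\{p,q\}=\{r,s\}$, $p\ne r$, hence $p=s$, $q=r$). Your observation that $i\mapsto c(a_i,v)$ is a bijection, giving injectivity of the map on $1$--cells, is a worthwhile point that the paper leaves implicit.

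However, your injectivity argument for $2$--cells has a genuine flaw. First, a $2$--cell of $\T_{2m+1}$ is \emph{not} determined by its vertex set: each circle factor $C_2$ has two $1$--cells $e_0,e_1$ with the same endpoints, so over a fixed set of four corners spanning coordinate directions $p,q$ there are four distinct squares of $\T_{2m+1}$ (the choices of $e_0$ or $e_1$ in positions $p$ and $q$); for the same reason ``the unique edge joining these two vertices'' should be ``the unique \emph{directed} edge''. Second, and more seriously, the sufficient condition you reduce to is false: distinct $2$--cells of $\widehat K_m$ need not have distinct corner sets. For a commutator relator $[a_i,a_{i+1}]$, the lift based at $v$ and the lift based at $v\cdot a_i$ are distinct $2$--cells, but since the generators act as involutions both have corner set $\{v,\,v\cdot a_i,\,v\cdot a_{i+1},\,v\cdot a_ia_{i+1}\}$, and at each shared corner both determine the same wedge $\{a_i,a_{i+1}\}$; so your ``distinct wedges at $v_0$'' step fails as well. (These two cells do map to different squares of $\T_{2m+1}$, but only because one uses the directed edge $v\to v\cdot a_i$, i.e.\ $e_0$ in position $c(a_i,v)$, while the other uses the oppositely directed edge, i.e.\ $e_1$ --- precisely the data your corner-set bookkeeping discards.) The repair is to argue with directed $1$--cells: you already know the map is injective on $1$--cells, the boundary of a square of $\T_{2m+1}$ consists of four specific $1$--cells, and two lifted $2$--cells with the same four directed boundary edges must coincide, since a lift of a relator cell is determined by the unique lift of its attaching word at the base corner.
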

\begin{proof} 
The $0$--cells of $\widehat K_m$ are in one-to-one correspondence with the right cosets $S\backslash G_{m,m}$. Since the action of $G_{m,m}$ is transitive on $H_{2m+1}$ by Proposition~\ref{prop:c1}(1), $\widehat K_m^{(0)}$ consists of ${|G_{m,m}:S|}=2^{2m+1}$ vertices which we can identify with $\T_{2m+1}^{(0)}$, the $0$--skeleton of $\T_{2m+1}$, which was earlier identified with the set $H_{2m+1}$ of all $(2m+1)$--tuples consisting of $\{0,1\}$.

The $1$--cells of $\widehat K_m$ are in one-to-one correspondence with pairs of right cosets $(Sg,Sga_i)$ where $a_i$, $i=1,\dots, 2m+1$ runs through all the generators of $G_{m,m}$. Proposition~\ref{prop:c1}(3) guarantees that each such $1$--cell is not a loop, and it actually belongs to the $1$--skeleton of the $(2m+1)$--torus $\T_{2m+1}$.

The $2$--cells of $\widehat K_m$ are lifts of the $2$--cells in $K_{m,m}$. Each such $2$--cell is uniquely determined by the base vertex (a lift of the base vertex of $K_{m,m}$) and by the fixed cyclic order of the relator word of $G_{m,m}$, which defines the attaching map of a $2$--cell in $K_{m,m}$. 
Indeed, since $\pi\colon G_{m,m}^\circ\to \Sym(H_{2m+1})$ is a homomorphism, every relator word $w$ of $G_{m,m}$ acts as the identical permutation. There are two types of relators in the presentation for $G_{m,m}$:
\begin{gather*}
a_ia_{i+1}a_i^{-1}a_{i+1}^{-1}=1\quad\text{for $i=1,\dots,m$,\quad and}\\
a_{m+j+1}^{-1}a_ja_{m+j+1}a_{m+j}^{-1}=1\quad\text{for $j=1,\dots,m$,}
\end{gather*}
each of which has length $4$. Thus they define length $4$ loops in $\widehat K_m$, based at every vertex, each of such loops has to be filled with a $2$--cell because these loops must be nullhomotopic when projected to $K_{m,m}$.
Thus, each $2$--cell of $\widehat K_m$ can be given by a word $a_ia_ja_l^{-1}a_j^{-1}$ for some values $i\ne j$, $j\ne l$, see Figure~\ref{fig:2cell}.

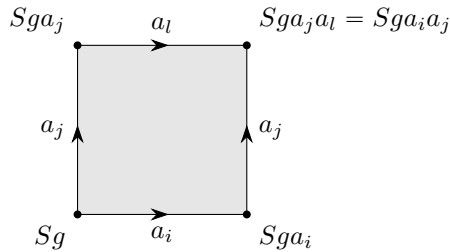
\begin{figure}[!h] 
\begin{tikzpicture}[scale=0.75]

\begin{scope}[color=black!10]
\fill (0,0)--(3,0)--(3,3)--(0,3);
\end{scope}

\fill (0,0) circle (2pt) node[below left=1pt] {$Sg$};
\fill (0,3) circle (2pt) node[above left=1pt] {$Sga_j$};
\fill (3,0) circle (2pt) node[below right=1pt] {$Sga_i$};
\fill (3,3) circle (2pt) node[above right=1pt] {$\lefteqn{Sga_ja_l=Sga_ia_j}$};

\begin{scope}[]
\draw[->-=0.53] (0,0)--node[left=1pt] {$a_{j}$}(0,3);
\draw[->-=0.54] (0,3)--node[above=1pt] {$a_l$}(3,3);
\draw[->-=0.54] (0,0)--node[below=1pt] {$a_i$} (3,0);
\draw[->-=0.53] (3,0)--node[right=1pt] {$a_{j}$}(3,3);
\end{scope}

\end{tikzpicture}
\caption{A typical $2$--cell in $\widehat K_m$.\label{fig:2cell}}
\end{figure}

If we identify cosets $S\backslash G_{m,m}$ with $\T_{2m+1}^{(0)}\equiv H_{2m+1}$, Proposition~\ref{prop:c1}(3) shows that for any $i=1,\dots,2m+1$, every edge $(Sg,Sga_i)$ changes only one coordinate of the $(2m+1)$--tuple of $\{0,1\}$ representing vertex $Sg$, therefore it maps to a suitable $1$--cell of $\T_{2m+1}$.

Let's show that each $2$--cell of the above form at a vertex $Sg\in \widehat K_m^{(0)}$ naturally embeds into $\T_{2m+1}^{(2)}$ under the embedding induced by the embeddings of $\widehat K_m^{(0)}$ and $\widehat K_m^{(1)}$ to $\T_{2m+1}^{(1)}$ introduced above.
Denote $p,q,r,s$ the positions in $\{1,\dots,2m+1\}$ in which the endpoints of the following edges differ: $(Sg,Sga_i)$, $(Sga_i, Sga_ia_j)$, $(Sg,Sga_j)$, $(Sga_j,Sga_ja_l)$, respectively. By Proposition~\ref{prop:c1}(4), $a_ia_j$ and $a_ja_l$ have no fixpoints on $H_{2m+1}$, hence $p\ne q$ and $r\ne s$. And since the square above is commutative, we conclude that $2$--element sets $\{p,q\}$ and $\{r,s\}$ are equal. Again, by Proposition~\ref{prop:c1}(2), $a_i$ and $a_j$ act as involutions, hence $p\ne r$, since otherwise $a_i^{-1}a_j=a_ia_j$ would have a fixpoint $Sga_i$. Therefore, $p=s$, $q=r$, and the $2$--cell under consideration actually belongs to $\T_{2m+1}^{(2)}$ since each pair of its parallel edges changes coordinates of vertices in the same position, one in position $p=s$ and another in $q=r$.
\end{proof}

\subsection{Exploring hyperplane pathologies} We will need the following description of hyperplanes and walls in the $n$--torus $\T_n$.

\begin{lem}\label{lem:hpw} The hyperplanes and oriented walls in $\T_n$ are in $1$--$1$ correspondence with pairs $(i,e_\alpha)$, where $1\le i\le n$ and $\alpha\in\{0,1\}$. More explicitly: 
\begin{itemize}
\item[(1)] the hyperplane corresponding to the pair $(i,e_\alpha)$ is a subset of $\T_n$ of one of the following two types:
\[
\big\{\,(x_1,\dots x_{i-1},\tfrac 12,x_{i+1},\dots,x_n)\mid x_j\in [0,2]\,\big\}
\]
if $e_\alpha=e_0$, and
\[
\big\{\,(x_1,\dots x_{i-1},\tfrac 32,x_{i+1},\dots,x_n)\mid x_j \in [0,2]\,\big\}
\]
if $e_\alpha=e_1$ (with the identification $0\sim2$).

\item[(2)] the oriented wall through a $1$--cell 
\[
v_1\times \dots\times v_{i-1}\times e_\alpha\times v_{i+1}\times\dots\times v_{n}
\] 
consists of all $1$--cells 
\[
u_1\times \dots\times u_{i-1}\times e_\alpha\times u_{i+1}\times\dots\times u_{n}
\] 
with $i$, $e_\alpha$ fixed, and $u_k$'s taking all possible values of $\{0,1\}$.
\end{itemize}
The oriented wall in \emph{(2)} is dual to the corresponding hyperplane in \emph{(1)}. 
\end{lem}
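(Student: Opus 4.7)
\begin{proof*}[Proof plan]
The plan is to unpack the product CW structure on $\T_n$ explicitly and then read off the hyperplanes and oriented walls directly from it. The $n$-cubes of $\T_n$ are indexed by tuples $(k_1,\dots,k_n)\in\{0,1\}^n$: the cube with index $(k_1,\dots,k_n)$ is the product $c_{k_1}\times\cdots\times c_{k_n}$, where $c_0=e_0$ and $c_1=e_1$. Each such cube carries $n$ midcubes, one for each coordinate direction; the midcube in direction $i$ is obtained by fixing the $i$-th coordinate to $k_i+\tfrac12$. Thus each midcube is tagged by a pair $(i,e_\alpha)$ recording its direction and which of the two edges $e_0$, $e_1$ of the $i$-th factor $C_2$ it halves.

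For part (1), I would first observe that two midcubes in distinct $n$-cubes can share a face only if they have the same tag $(i,e_\alpha)$, because the gluings in $Y$ come from the cubical face relations of $\T_n$ and those preserve both $i$ and the choice of $e_\alpha$ in the $i$-th factor. Hence the components of $Y$ are indexed by these pairs, giving at most $2n$ hyperplanes. Next I would check that all midcubes with the same tag $(i,e_\alpha)$ do assemble into a single connected piece: the description in the lemma,
\[
\{(x_1,\dots,x_{i-1},c,x_{i+1},\dots,x_n)\mid x_j\in[0,2]\},
\]
with $c=\tfrac12$ or $\tfrac32$ according as $\alpha=0$ or $1$, is visibly a product of $(n-1)$ circles and so is connected. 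Matching the two descriptions coordinate by coordinate identifies this subset with the union of midcubes carrying the tag $(i,e_\alpha)$, finishing part~(1).

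For part (2), I would first verify that elementary parallelism preserves the tag. If two oriented edges are opposite sides of a square of $\T_n$ with matching orientations, the square is a product of two edges $e_\alpha$, $e_\beta$ (in positions $i\neq j$) with fixed vertex coordinates elsewhere; the two oriented edges in the $i$-direction are both copies of $e_\alpha$ but based at different vertices obtained by flipping the $j$-th coordinate. So elementary parallelism fixes both $i$ and $e_\alpha$, and hence so does the parallelism equivalence relation. Conversely, any two edges with the same tag $(i,e_\alpha)$ differ only in their vertex coordinates $v_j$ with $j\neq i$; flipping these coordinates one at a time produces a chain of squares exhibiting elementary parallelisms that connect the two edges. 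This shows the parallelism class of an edge is exactly the set described in (2). The duality assertion is then a single computation: the midpoint of the edge $v_1\times\cdots\times e_\alpha\times\cdots\times v_n$ has $i$-th coordinate $\tfrac12$ or $\tfrac32$ according to $\alpha$, and all other coordinates in $\{0,1\}$, so it is a $0$-cell of the hyperplane with the same tag $(i,e_\alpha)$.

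The main obstacle, such as it is, is purely notational: one must keep straight that each $C_2$ factor contains \emph{two} edges $e_0,e_1$ rather than one, so that each coordinate direction produces two distinct hyperplanes (with $x_i=\tfrac12$ and $x_i=\tfrac32$) and two distinct oriented walls. Once the bookkeeping is set up so that the tag $(i,e_\alpha)$ is tracked through the product structure, both parts reduce to routine verifications about which midcubes (respectively, which edges) share a face (respectively, a square) in the product CW structure.
\end{proof*}
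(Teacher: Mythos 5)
Your proof is correct. For part (2) and the final duality assertion your argument is essentially the same as the paper's: elementary parallelism inside a square $\cdots\times e_\alpha\times\cdots\times e_\beta\times\cdots$ fixes the pair $(i,e_\alpha)$ and flips exactly one other vertex-coordinate, and by varying the flipped position one connects any two edges with the same tag. For part (1) your route is a mild variant: you work directly with the product CW structure of $\T_n=(C_2)^n$, indexing midcubes by the tag $(i,e_\alpha)$ and checking that face-gluings preserve the tag while the tag class assembles into a connected product of circles. The paper instead lifts to $\R^n$ with its standard cubulation, where hyperplanes are the affine slices $\{x_i=k+\tfrac12\}$, and then quotients by $(2\Z)^n$. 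Your version is more self-contained and arguably spells out the combinatorics that the paper's ``modding out yields the result'' compresses, while the paper's approach is shorter because it leans on the familiar picture in $\R^n$; both are valid.
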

\begin{proof}
(1) Recall that the structure of a cube complex on $\T_n\cong\R^n/(2\Z)^n$ is induced by the standard cubulation of $\R^n$. Hyperplanes in $\R^n$ are subsets of the form
\[
H_{i,k}=\big\{(x_1,\dots,x_{i-1},k+\tfrac12,x_{i+1},\dots,x_n)\mid x_j\in\R\big\}, \quad 1\le i\le n,\quad k\in\Z.
\]
Modding out by the action of $(2\Z)^n$ yields the result.

(2) Recall that the oriented wall containing a $1$--cell $a$ of a cube complex $X$ is the class of all oriented $1$--cells of $X$ which are connected to $a$ through a sequence of elementary parallelisms via the $2$--cells of $X$. Notice that an arbitrary $1$--cell of $\T_{n}$ is a product of the form: $v_1\times \dots\times v_{i-1}\times e_\alpha\times v_{i+1}\times\dots\times v_{n}$, where each vertex $v_k\in\{0,1\}$ and $\alpha=0$ or $1$, and an arbitrary $2$--cell of $\T_{n}$ is a product 
\[
v_1\times \dots\times v_{i-1}\times e_\alpha\times v_{i+1}\times\dots\times v_{j-1}\times e_\beta\times v_{j+1}\times\dots\times v_{n}
\] 
for some choice of $1\le i, j\le n$ ($i\ne j$) and $\alpha,\beta\in\{0,1\}$. Thus, the elementary parallelism via the above $2$--cell establishes equivalence of the $1$--cells 
\[
v_1\times \dots\times v_{i-1}\times e_\alpha\times v_{i+1}\times\dots\times v_{j}\times\dots\times v_{n}
\] 
and
\[
v_1\times \dots\times v_{i-1}\times e_\alpha\times v_{i+1}\times\dots\times (1-v_{j})\times\dots\times v_{n}.
\] 
Since index $j$ varies independently of $i$, we conclude that any $1$--cell $u_1\times \dots\times u_{i-1}\times e_\alpha\times u_{i+1}\times\dots\times u_{n}$, $u_k\in\{0,1\}$, is contained in the parallelism class of $v_1\times \dots\times v_{i-1}\times e_\alpha\times v_{i+1}\times\dots\times v_{n}$.
\end{proof}

Now we show that the complex $\widehat K_m$ does not have three of the four pathologies in the definition of a special cube complex.

\begin{prop}\label{prop:clean} \
\begin{itemize} 
\item[(a)] Hyperplanes of $\widehat K_m$ do not self-intersect.
\item[(b)] Hyperplanes of $\widehat K_m$ do not self-osculate.
\item[(c)] Hyperplanes of $\widehat K_m$ are two-sided.
\end{itemize}
\end{prop}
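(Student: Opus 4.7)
My plan is to deduce each of (a), (b), (c) from the corresponding statement about the ambient torus $\T_{2m+1}$, exploiting the cellular embedding $\widehat K_m \hookrightarrow \T_{2m+1}^{(2)}$ from Proposition~\ref{prop:emb}. The key preliminary observation is that since $\widehat K_m$ is a cellular subcomplex of $\T_{2m+1}$, every $2$--cell of $\widehat K_m$ is a $2$--cell of the torus, and hence every elementary parallelism of oriented edges in $\widehat K_m$ is also an elementary parallelism in $\T_{2m+1}$. Consequently, every hyperplane $H$ of $\widehat K_m$ embeds into a unique hyperplane $\tilde H$ of $\T_{2m+1}$, with all midcubes of $H$ belonging to $\tilde H$ and all edges dual to $H$ in $\widehat K_m$ being dual to $\tilde H$ in the torus; an analogous statement holds for oriented walls.

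With this reduction in place, the three items follow from Lemma~\ref{lem:hpw}. For (a), the two midcubes of any single $2$--cell of $\T_{2m+1}$ lie in hyperplanes of types $(i, e_\alpha)$ and $(j, e_\beta)$ with $i \ne j$, hence in distinct hyperplanes of the torus; so no hyperplane $H \subset \tilde H$ of $\widehat K_m$ can contain both midcubes of a $2$--cell of $\widehat K_m$. For (b), the description in Lemma~\ref{lem:hpw}(2) shows that the wall associated with a pair $(i, e_\alpha)$ in $\T_{2m+1}$ has exactly one edge incident to any prescribed vertex, ruling out two distinct dual edges at a common vertex of $\widehat K_m$ and thus preventing self-osculation of $H$. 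For (c), all edges dual to the hyperplane $(i, e_\alpha)$ of $\T_{2m+1}$ can be simultaneously oriented using the orientation of $e_\alpha$ in the $i$--th factor; this orientation is preserved by every elementary parallelism (since each $2$--cell of the torus is a product of two edges in distinct coordinate directions), yielding a combinatorial two-sidedness map $\tilde H \times [0,1] \to \T_{2m+1}$ that restricts to a map $H \times [0,1] \to \widehat K_m$.

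The main care point is the preliminary identification: verifying rigorously that hyperplanes, walls, and their dualities in the cellular subcomplex $\widehat K_m \subset \T_{2m+1}$ really are inherited from those of the ambient torus. Once this is made precise via the explicit descriptions in Lemma~\ref{lem:hpw}, properties (a)--(c) become immediate corollaries of the product structure on $\T_{2m+1}$. Note that this strategy only handles three of the four pathologies; the final one (inter-osculation) genuinely requires a further passage to a cover, which is presumably addressed by the $VH$--structure invoked in Proposition~\ref{prop:vh} and is not attempted here.
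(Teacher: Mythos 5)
Your proof is correct and follows essentially the same strategy as the paper's: use the cellular embedding $\widehat K_m\hookrightarrow\T_{2m+1}^{(2)}$ from Proposition~\ref{prop:emb} to reduce each statement to the explicit coordinate description of hyperplanes and walls in $\T_{2m+1}$ given in Lemma~\ref{lem:hpw}, and check each of (a), (b), (c) there. The only cosmetic difference is in (c), where the paper constructs the map $H\times[0,1]\to\widehat K_m$ by writing out coordinates of the relevant squares, whereas you phrase the same conclusion via a consistent orientation of the dual edges; the underlying mechanism is identical.
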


\begin{proof}
It is convenient to work with the oriented walls dual to hyperplanes. Since, by Proposition~\ref{prop:emb}, $\widehat K_m$ is a square subcomplex of $\T_{2m+1}$, every wall of $\widehat K_m$ is a subset of some wall of $\T_{2m+1}$. By Lemma~\ref{lem:hpw}, the walls in $\T_{2m+1}$ consist of all $1$--cells of the form $u_1\times \dots\times u_{i-1}\times e_\alpha\times u_{i+1}\times\dots\times u_{2m+1}$ for a fixed $i$, $e_\alpha$, and arbitrary $u_k\in\{0,1\}$.

If a hyperplane of $\widehat K_m$ were self-intersecting, the corresponding wall would contain edges with $e_\alpha$ in two different coordinate positions $i$ and $j$, which is impossible. This proves (a).

If a hyperplane of $\widehat K_m$ were self-osculating, the corresponding wall would contain a pair of edges $u_1\times u_2\times \dots\times u_{i-1}\times e_\alpha\times u_{i+1}\times\dots\times u_{2m+1}$ and $v_1\times v_2\times \dots\times v_{i-1}\times e_\alpha\times v_{i+1}\times\dots\times v_{2m+1}$ with common extremities: either their origins or their termini coincide (for direct self-osculation), or the origin of one edge coincides with the terminus of the other (for indirect self-osculation). In either case the tuples $(u_1,\dots,u_{i-1},u_{i+1},\dots,u_{2m+1})$ and $(v_1,\dots,v_{i-1},v_{i+1},\dots,v_{2m+1})$ are equal, which means that the original $1$--cells are equal and there is actually no self-osculation happening. This proves (b).

To prove (c) we observe that a hyperplane $H$ of $\widehat K_m$ lies in a unique hyperplane in $\T_{2m+1}$. In particular, by the above lemma, in the coordinate system on $\T_{2m+1}$, the hyperplane $H$ has the following description:
\[
H=\big\{\,(x_1,\dots x_{i-1},t,x_{i+1},\dots,x_{2m+1})\,\big\}
\]
for some $1\le i\le 2m+1$, $t=\frac12$ or $\frac32$, and some values from $[0,2]$ for the rest of the variables. Since $\widehat K_m$ is a square complex, $H$ is the union of mid-cubes of some set of square $2$--cells. Hence each point $z$ of $H$ belongs to a square $2$--cell $C$ of $\widehat K_m$ of the form
\[
C=v_1\times \dots\times v_{i-1}\times e_\alpha\times v_{i+1}\times\dots\times v_{j-1}\times e_\beta\times v_{j+1}\times\dots\times v_{2m+1}
\]
for some $1\le j\le 2m+1$, where all $v_k$'s are $0$ or $1$. (The index $j$ may be less than or bigger than $i$.)

Suppose that $e_\alpha=e_0$ so that $t=\frac12$. Then $z$ actually has coordinates:
\[
z=(v_1,\dots v_{i-1},\tfrac12,v_{i+1},\dots,v_{j-1},s,v_{j+1},\dots,v_{2m+1})
\]
where $s$ is some value from $[0,2]$. We see that the set
\[
z\times[0,1]=\big\{\,(v_1,\dots v_{i-1},t,v_{i+1},\dots,v_{j-1},s,v_{j+1},\dots,v_{2m+1})\mid t\in [0,1]\,\big\}
\]
also belongs to the same square $C$ above. We conclude that the product 
\[
H\times[0,1]=\big\{\,(x_1,\dots x_{i-1},t,x_{i+1},\dots,x_n)\mid t\in [0,1]\,\big\}
\]
is a union of $2$--cells of $\widehat K_m$. This defines a combinatorial map $H\times [0,1]\to \widehat K_m$ (actually, an embedding) such that $H\times\{\frac12\}$ is identified with $H$ itself.

A similar reasoning applies if $e_\alpha=e_1$, $t=\frac32$ (we parametrize $H\times[0,1]$ by $t \in [1,2]$).

This proves that every hyperplane of $\widehat K_m$ is two-sided.
\end{proof}

\medskip
Unfortunately, cube subcomplexes of a Cartesian product of three or more graphs can have inter-osculating hyperplanes, as the example in the Figure~\ref{fig:iosc} shows. The $2$--complex in Figure~\ref{fig:iosc} is a subcomplex of the product of two segments of length one and a segment of length two.

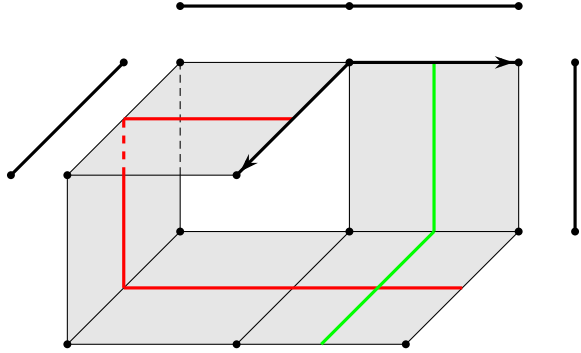
\begin{figure}[h!]
\begin{tikzpicture}[scale=0.75]

\begin{scope}[color=black!10]
\fill (5,5)--(2,5)--(0,3)--(0,0)--(6,0)--(8,2)--(2,2)--(2,3)--(3,3)--(5,5);
\fill (5,5)--(8,5)--(8,2)--(5,2)--(5,5);
\end{scope}

\fill (0,0) circle (2pt);
\fill (0,3) circle (2pt);
\fill (3,0) circle (2pt);
\fill (3,3) circle (2pt);
\fill (2,2) circle (2pt);
\fill (2,5) circle (2pt);
\fill (5,2) circle (2pt);
\fill (5,5) circle (2pt);
\fill (6,0) circle (2pt);
\fill (8,2) circle (2pt);
\fill (8,5) circle (2pt);

\draw (0,0)--(6,0);
\draw (2,2)--(8,2);
\draw (2,5)--(5,5);
\draw (0,0)--(0,3);
\draw (0,0)--(2,2);
\draw (0,3)--(2,5);
\draw (0,3)--(3,3);
\draw (3,0)--(5,2);
\draw (5,2)--(5,5);
\draw (6,0)--(8,2);
\draw (8,2)--(8,5);
\draw (2,3)--(2,2);
\draw[densely dashed] (2,3)--(2,5);

\begin{scope}[red,very thick]
\draw (4,4)--(1,4);
\draw (1,1)--(1,3);
\draw[dashed] (1,3)--(1,4);
\draw (1,1)--(7,1);
\end{scope}

\begin{scope}[green,very thick]
\draw (4.5,0)--(6.5,2);
\draw (6.5,2)--(6.5,5);
\end{scope}

\begin{scope}[]
\draw[->-=0.97,very thick] (5,5)--(3,3);
\draw[->-=0.97,very thick] (5,5)--(8,5);
\end{scope}

\begin{scope}[very thick]
\fill (-1,3) circle (2pt);
\fill (1,5) circle (2pt);
\fill (2,6) circle (2pt);
\fill (5,6) circle (2pt);
\fill (8,6) circle (2pt);
\fill (9,2) circle (2pt);
\fill (9,5) circle (2pt);
\draw (-1,3)--(1,5);
\draw (2,6)--(5,6);
\draw (5,6)--(8,6);
\draw (9,2)--(9,5);
\end{scope}

\end{tikzpicture}
\caption{A subcomplex in a product of three graphs with inter-osculating hyperplanes.\label{fig:iosc}}
\end{figure}

However, Haglund and Wise have proved in~\cite[Th.~5.7]{HW} that in the case when the square complex is a so-called VH-complex, the absence of the first three hyperplane pathologies guarantees the existence of a finite special cover.

\begin{defi} (VH-complex)
A simple square complex is called a \emph{VH-complex} if its edges are divided into two disjoint classes: \emph{vertical} and \emph{horizontal}, such that the attaching map of each square is of the form $vhv'h'$ where $v,v'$ are vertical and $h,h'$ are horizontal edges.
\end{defi}

\begin{prop}\label{prop:vh}
For all \emph{even} integers $m$, the complexes $K_{m,m}$ and $\widehat K_m$ are VH-complexes. Hence there exist a finite special cover $\overbar K_m\to \widehat K_m$.
\end{prop}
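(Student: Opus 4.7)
\begin{proof*}[Proof plan]
The plan is to exhibit an explicit VH-structure on $K_{m,m}$ and then invoke \cite[Th.~5.7]{HW}. We assign types by parity: declare the edge labeled $a_i$ to be \emph{vertical} if $i$ is odd and \emph{horizontal} if $i$ is even, and let $t(a_i)\in\{V,H\}$ denote the resulting type. To verify this gives a VH-structure we only need to check that the boundary word of every $2$-cell of $K_{m,m}$ has the alternating form $vhv'h'$, equivalently that opposite sides of each square carry the same type.

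For a commutator relator $[a_i,a_{i+1}]$, $1\le i\le m$, opposite sides of the square carry the same generator, so the VH-condition reduces to $t(a_i)\ne t(a_{i+1})$, which holds because $i$ and $i+1$ have opposite parity. For an HNN relator $a_{m+j+1}^{-1}a_ja_{m+j+1}a_{m+j}^{-1}$, $1\le j\le m$, opposite sides carry labels $\{a_{m+j+1},a_{m+j+1}\}$ and $\{a_j,a_{m+j}\}$, so the condition requires both $t(a_j)=t(a_{m+j})$ and $t(a_{m+j+1})\ne t(a_j)$. Since $m$ is even, $j$ and $m+j$ have the same parity and $m+j+1$ has parity opposite to $j$, so both conditions hold. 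This is precisely where the hypothesis that $m$ is even is consumed: the parity-based labelling visibly fails when $m$ is odd.

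Next, the VH-structure on $K_{m,m}$ pulls back to a VH-structure on $\widehat K_m$: each edge of $\widehat K_m$ inherits the type of its image in $K_{m,m}$, and because the covering map $\widehat K_m\to K_{m,m}$ is combinatorial and sends each $2$-cell homeomorphically onto a $2$-cell of $K_{m,m}$, the alternating boundary pattern is preserved around every square of $\widehat K_m$. Combined with Proposition~\ref{prop:clean}, which asserts that hyperplanes of $\widehat K_m$ are two-sided and exhibit no self-intersections and no self-osculations, the hypotheses of the Haglund--Wise theorem \cite[Th.~5.7]{HW} are satisfied, producing the required finite special cover $\overbar K_m\to\widehat K_m$.

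We expect no real obstacle beyond the parity bookkeeping in the second paragraph; the substantive input---the hyperplane analysis for $\widehat K_m$ and the existence of finite special covers for clean VH-complexes---is already supplied by Proposition~\ref{prop:clean} and by \cite{HW}, respectively.
\end{proof*}
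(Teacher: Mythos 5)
Your proof is correct and follows essentially the same route as the paper: both assign types to generators by parity of index, observe that evenness of $m$ is exactly what makes the HNN relators $a_{m+j+1}^{-1}a_ja_{m+j+1}a_{m+j}^{-1}$ alternate (since $j$ and $m+j$ then share parity while $m+j+1$ has the opposite parity), pull the VH-structure back along the combinatorial covering map, and invoke Proposition~\ref{prop:clean} together with \cite[Th.~5.7]{HW}. You spell out the parity bookkeeping a bit more explicitly than the paper, and you phrase the pullback via the 2-cells being mapped homeomorphically whereas the paper phrases it via bipartiteness of vertex links; these are cosmetically different but logically equivalent.
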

\begin{proof}
From the LOG definition (see section~\ref{sect:log}) of groups $G_{m,m}$ we observe that, for the even integers $m$, the odd-indexed and the even-indexed generators form two classes $V$ and $H$ (`vertical' and `horizontal') such that all relators of $G_{m,m}$ have the form: $v_1^h=v_2$ or $h_1^v=h_2$ for some $v,v_1,v_2\in V$, $h,h_1,h_2\in H$. This implies that the complex $K_{m,m}$ is a VH-complex.

The complex $\widehat K_m$, being a finite cover of $K_{m,m}$, inherits the structure of a VH-complex from $K_{m,m}$. Indeed, the preimages of the vertical and horizontal edges in $K_{m,m}$ under the covering map $p\colon\widehat K_m\to K_{m,m}$ form two disjoint classes $\widehat V=p^{-1}(V)$ and $\widehat H=p^{-1}(H)$, and all edges of $\widehat K_m$ are contained in $\widehat V\sqcup\widehat H$. The link of every vertex of $K_{m,m}$ is a bipartite graph corresponding to parts $V$ and $H$, and links of vertices are mapped isomorphically under covering maps. Thus all links of vertices in $\widehat K_m$ are bipartite with respect to parts $\widehat V$ and $\widehat H$. Therefore all $2$--cells of $\widehat K_m$ have boundaries of the form $v_1h_1v_2h_2$ with $v_1,v_2\in \widehat V$, $h_1,h_2\in\widehat H$. By Proposition~\ref{prop:clean}, hyperplanes of $\widehat K_m$ have no self-intersections and no self-osculations. Hence, by Theorem~5.7 in~\cite{HW}, there exist a special cube complex $\overbar K_m$ and a finite cover $\overbar K_m\to \widehat K_m$.
\end{proof}

\section{Proof of Main Theorems\label{sect:proof_main_thm}}

Now we are ready to prove our main results.

\begin{thma} For each positive \emph{even} integer $m$ there exist virtually special free-by-cyclic groups $G_{m,m}\cong F_{2m}\rtimes_\phi\Z$ with growth function $\gr_\phi(n)\sim n^m$ and $G_{m,m-1}\cong F_{2m-1}\rtimes_{\phi'}\Z$ with growth function $\gr_{\phi'}(n)\sim n^{m-1}$.
\end{thma}
\begin{proof}
We have seen in Proposition~\ref{prop:vh} that for each even integer $m>0$, there exist a special cover $\overbar K_m\to K_{m,m}$ for the presentation complex $K_{m,m}$ of the group $G_{m,m}\cong F_{2m}\rtimes_\phi\Z$. In Propositions~\ref{prop1} and~\ref{prop2} of section~\ref{sect:growth} we show that the growth function for $\phi=\phi_{m,m}$ is $\sim n^m$. This proves the first part of Theorem~A.

For the second part, recall that $G_{m,m-1}=F_{2m-1}\rtimes_{\phi'}\Z$, where $\phi'=\phi_{m,m-1}$ is the restriction of $\phi$ on the free subgroup on the first $2m-1$ generators. By construction, the presentation complex $K_{m,m-1}$ for $G_{m,m-1}$ is a subcomplex of $K_{m,m}$, and is actually obtained from $K_{m,m}$ by deleting the loop corresponding to the last generator $a_{2m+1}$ and also the single open $2$--cell adjacent to $a_{2m+1}$ (i.e. which have $a_{2m+1}$ as one of their sides).

Let $\bar p\colon \overbar K_m\to K_{m,m}$ be the special cover of $K_{m,m}$ from Proposition~\ref{prop:vh}. Consider a square subcomplex $\overbar K_m' \subset \overbar K_m$ which is obtained by:
\begin{enumerate}
\item deleting all $1$--cells of $\overbar K_m$ which map under $\bar p$ onto the loop labeled $a_{2m+1}$ in $K_{m,m}$;
\item deleting all open $2$--cells of $\overbar K_m$ which have $1$--cells from (1) as one of their sides;
\item taking a connected component of the resulting complex.
\end{enumerate}

We claim that $\bar p\colon\overbar K_m'\to K_{m,m-1}$ is a finite special cover of $K_{m,m-1}$. 

Indeed, by construction, $\bar p(\overbar K_m')$ lies in $K_{m,m-1}$. The hyperplanes of $\overbar K_m'$ are two-sided, do not self-intersect and do not self-osculate, since they are subcomplexes of the corresponding hyperplanes in the special complex $\overbar K_m$.

To see that the complex $\overbar K_m'$ has no inter-osculating hyperplanes, observe that in steps (1), (2) above we deleted only the hyperplanes which are dual to the $1$--cells corresponding to the last generator $a_{2m+1}$. This doesn't change the absence of inter-osculation of the remaining hyperplanes of $\overbar K_m$. Therefore, the hyperplanes in $\overbar K_m'$ do not inter-osculate either, and $\overbar K_m'$ is special.

Again, that the growth of $\phi'$ is $\sim n^{m-1}$ is shown in Propositions~\ref{prop1} and~\ref{prop2} in section~\ref{sect:growth}, since $\phi'=\phi_{m,m-1}$.
\end{proof}

\begin{cora} For each positive integer $k$ there exist a right-angled Artin group containing a free-by-cyclic subgroup whose monodromy automorphism has growth function $\sim n^k$.
\end{cora}
\begin{proof}
In Theorem~A we have proved that for any positive integer $k$ (where $k=m$ or $m-1$ for arbitrary even $m$) there exists a free-by-cyclic group $G=F\rtimes_\psi\Z$ with $\gr_\psi(n)\sim n^k$, such that some finite index subgroup $H\le G$ is isomorphic to a fundamental group of a special cube complex. By D.~Wise's celebrated result (see Corollary~\ref{cor:spec_raag}), there exists a right-angled Artin group $A(\Delta)$ with $H$ isomorphic to a subgroup of $A(\Delta)$.

Let's prove that $H$ is free-by-cyclic itself. Indeed, we have a commutative diagram:
\[
\begin{tikzcd}
1\arrow[r] & F\arrow[r] & F\rtimes_\psi\Z \arrow[r,"\pi"] &\Z \arrow[r] &1\\
1\arrow[r] & N \arrow[r]\arrow[u, hook] & H \arrow[u, hook]\arrow[r] & \ell\Z\arrow[u,hook]\arrow[r]&1
\end{tikzcd}
\]
Here $N=H\cap F$ and $\ell\Z$ is the image of $H$ under $\pi$. Since $H$ has finite index in $G$, $\ell\ne0$. Hence the subgroup $N$ is invariant under $\psi^\ell$ and is a free group. Since $F\triangleleft G$, $FH$ is a subgroup of $G$, and
\[
|F:N|=|F:H\cap F|=|FH:H|\le |G:H| < \infty.
\]
Therefore, $N$ is a finitely generated free group, and $H\cong N\rtimes_{\psi^\ell}\Z$, a free-by-cyclic group.

Parts (ii) and (iii) of Proposition~\ref{prop:growth} tell us now that 
\[
\gr_{\psi^\ell|_{N}}(n)\sim\gr_{\psi}(n)\sim n^k.\qedhere
\]
\end{proof}

\begin{thmb} For each positive integer $k$ there exists a right-angled Artin group which contains a finitely presented subgroup with Dehn function $\simeq n^k$.
\end{thmb}
\begin{proof}
In Theorem~A we proved that, for all even integers $m$, the free-by-cyclic group $G_{m,m}=F_{2m}\rtimes_\phi\Z$ (resp. $G_{m,m-1}=F_{2m-1}\rtimes_{\phi'}\Z$) has the following properties: $\gr_\phi(n)\sim n^m$ (resp. $\gr_{\phi'}(n)\sim n^{m-1}$), and it contains a finite index subgroup $H$ which embeds into a right-angled Artin group. 
In Corollary~A we showed that $H$ is itself free-by-cyclic: $H\cong N\rtimes_{\phi^\ell}\Z$ (resp. $H\cong N\rtimes_{{\phi'}^\ell}\Z$) for some finite index subgroup $N\le F_{2m}$ (resp. $N\le F_{2m-1}$) with the monodromy automorphism being $\phi^\ell$ (resp. $(\phi')^\ell$) for some $\ell>0$.

We claim that \textit{the Bieri double of $H$, $\Gamma(H)=H\Asterisk_FH$, has Dehn function $\delta_\Gamma(n)$ $\simeq$ equivalent to $\gr_{\phi}(n)\cdot n^2$ (resp. $\gr_{\phi'}(n)\cdot n^2$), and itself embeds into a RAAG.}

We now prove this claim for the case of subgroup $H\le G_{m,m}$, the monodromy automorphism $\phi^\ell$, and $k=m$, and notice that the case of $H\le G_{m,m-1}$, the monodromy automorphism ${\phi'}^\ell$, and $k=m-1$, is proved in a similar fashion.

The upper bound: $\delta_\Gamma(n)\preceqq n^{k+2}$ is established as follows. By Propositions~\ref{prop1}, and \ref{prop2}, $\gr_\phi(n)\sim n^k$ and $\gr_{\phi^{-1}}(n)\sim n^k$. Proposition~\ref{prop:growth}(ii) implies that $\gr_{\phi^\ell}(n)\sim n^k$ and $\gr_{(\phi^\ell)^{-1}}(n)=\gr_{(\phi^{-1})^\ell}(n)\sim n^k$, and so the upper bound follows from Proposition~\ref{prop:upper}.

The lower bound: $n\cdot \max_{\|b\|\le n,\,b\in N}\|\phi^{\ell n}(b)\|\preceqq \delta_\Gamma(n)$ was given in Proposition~\ref{prop:lower}. If we show that $\max_{\|b\|\le n,\,b\in N}\|\phi^{\ell n}(b)\|\suq n^{k+1}$, it will follow, in view of the above, that $\delta_\Gamma(n)\simeq n^{k+2}$.

We prove in section~\ref{sect:growth} that in the group $G_{m,m}=F_{2m}\rtimes_\phi\Z$, containing $H$, the maximum in the definition of the growth functions $\gr_{\phi}(n)$ and $\gr_{\phi^{ab}}(n)$ is achieved at the generator $B_m$ (see Corollary~\ref{cor:cert}): $\big\|\phi^n(B_m)\big\|\sim \big|(\phi_{m,k}^{ab})^n(\bar B_k)\big|_1 \sim n^m=n^k$. (Here the bar over an element of $F_{2m}$ denotes its image in the abelianization of $F_{2m}$.)

Since the subgroup $N$ is of finite index in $F_{2m}$, there exists an integer $p>0$ such that $B_m^p\in N$. Then we have: 
\[
\max_{\substack{\|b\|\le n\\ b\in N}}\big\|\phi^{\ell n}(b)\big\|\ge 
\big\|\phi^{\ell n}(B_m^{pn})\big\|\ge 
pn\cdot \big|(\phi^{ab})^{\ell n}(\bar{B}_m)\big|_1\su pn(\ell n)^m\sim n^{m+1}.
\]
Since $k=m$, this proves that $\delta_\Gamma(n)\simeq n^{k+2}$.

To prove that $\Gamma$ embeds into a RAAG, consider a homomorphism $\mu\colon\Gamma\to H\times F(u,v)$, where $F(u,v)$ is a free group of rank $2$ on free generators $u$, $v$, from~\cite[Rem.~3.7(iii)]{BriPoly}, which is described as follows. Let $N=F(x_1,\dots,x_q)$ be freely generated by elements $x_1,\dots,x_q$. Denote the generators of $\Z$-factors of three copies of $H$ as $s$, $t$ and $\tau$. Also denote for brevity $\psi=\phi^\ell$. Then $\Gamma=\big(F(x_1,\dots,x_q)\rtimes_\psi\la s\ra\big)\Asterisk_{F(x_1,\dots,x_q)}\big(F(x_1,\dots,x_q)\rtimes_\psi\la t\ra\big)$, and $H\times F(u,v)=\big(F(x_1,\dots,x_q)\rtimes_\psi\la \tau\ra\big) \times F(u,v)$. Define $\mu$ on the generators as follows:
\[
x_i\mapsto x_i,\qquad s\mapsto \tau u,\qquad t\mapsto \tau v.
\]
We check at once that $\mu$ is a homomorphism, and it is easily proved using the normal forms of elements in free amalgamated products, that $\mu$ is injective.

Thus, if we denote the right-angled Artin group containing $H$ as $A(\Delta)$, for some graph $\Delta$, then $\Gamma\subset H\times F_2$ is a subgroup of $A(\Delta)\times F_2$, which is itself a RAAG (corresponding to the graph join of $\Delta$ and the empty graph on two vertices).
\end{proof}

\begin{rem}
Following the proof of Proposition~\ref{prop:lower} given in~\cite[Lemma 1.5]{BriPoly} (see also~\cite[Proposition~7.2.2]{BriChap}), we can exhibit an explicit sequence of words $w_n=[(st^{-1})^n,t^{\ell n}B_k^{pn}t^{-\ell n}]$ (where $k=m$ or $m-1$, as above) which realize the lower bound for the Dehn function $\delta_\Gamma(n)$. To understand what van Kampen diagrams for these words look like, the reader is referred to the proof of~\cite[Lemma 1.5]{BriPoly}. In the notation of~\cite{BriPoly}, $\beta=B_k^{pn}$, $t_1=t$, $t_2=s$.
\end{rem}

\section{Growth of $\phi$ and $\phi^{-1}$\label{sect:growth}}

In Proposition~\ref{prop:phi} we have shown that, for all $1\le k\le m$, $G_{m,k}=F_{m+k}\rtimes_{\phi_{m,k}}\Z$, where $F_{m+k}$ is a free group with generators $A_1,\dots,A_m,B_1,\dots,B_{k}$. For convenience, in what follows we adopt the notation:
\[
\phi=\phi_{m,m}
\]
and use the fact that $\phi_{m,k}$ is the restriction of $\phi$ on the first $m+k$ generators. The goal of this section is to prove that $\phi_{m,k}$ and $(\phi_{m,k})^{-1}$ have growth $\sim n^{k}$. 
In particular, $\gr_\phi(n)\sim\gr_{\phi^{-1}}(n)\sim n^m$.

Throughout this section, $\|.\|$ will denote the word length in $F_{2m}$ with respect to the system of free generators $\{A_i,B_j\}$.

Recall (see Proposition~\ref{prop:phi}) that the automorphism $\phi$ is given by the formulas (where the overbar denotes the inverse):
\begin{align*}\tag{1}\label{eq1}
\quad\phi=\phi_{m,m}\colon\quad & A_1\longmapsto A_1\\
& A_2\longmapsto A_1\,(A_2)\,\ov{A}_1\\
& A_3\longmapsto A_1A_2\,(A_3)\,\ov{A}_2\ov{A}_1\\
&\dots\\
& A_{m}\longmapsto A_1A_2\ldots A_{m-1}\,(A_m)\,\ov{A}_{m-1}\ldots \ov{A}_1\\[1em]
&B_1\longmapsto A_1A_2\ldots A_m\,(B_1)\\
&B_2\longmapsto A_1A_2\ldots A_m\,(B_1B_2)\,\ov{A}_1\\
&B_3\longmapsto A_1A_2\ldots A_m\,(B_1B_2B_3)\,\ov{A}_2\ov{A}_1\\
&\dots\\
&B_{m}\longmapsto A_1A_2\ldots A_m\,(B_1B_2\ldots B_{m})\,\ov{A}_{m-1}\ov{A}_{m-2}\ldots\ov{A}_2\ov{A}_1.
\end{align*}

\bigskip
\subsection{Upper bounds for the growth of $\phi$, $\phi^{-1}$}

\begin{prop}\label{prop1}
For the automorphism $\phi_{m,k}$ we have:

\[
\gr_{\phi_{m,k}}(n)\pr n^k\text{\quad and \quad}\gr_{(\phi_{m,k})^{-1}}(n)\pr n^k.
\]
\end{prop}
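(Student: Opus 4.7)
The plan is to exploit the triangular structure of $\phi = \phi_{m,m}$: the subgroups $F^{(j)} := \la A_1,\dots,A_m,B_1,\dots,B_j\ra$ form a $\phi^{\pm1}$-invariant filtration, and the formulas in Proposition~\ref{prop:phi} show that $\phi(A_i)\in\la A_1,\dots,A_i\ra$ and $\phi(B_j)\in F^{(j)}$. Since $\phi_{m,k}$ is just the restriction of $\phi$ to $F^{(k)}$, it suffices to bound $\|\phi^n(A_i)\|$ and $\|\phi^n(B_j)\|$ on each generator and take the maximum. I would proceed by induction on the index $j$ to prove $\|\phi^n(B_j)\|=O(n^j)$. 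A naive submultiplicative estimate loses too much here---iterating it would give $\|\phi^n(B_j)\|=O(n^{j+m-1})$ or worse---so the argument has to start from sharp linear estimates on the ``base'' generators $A_i$ and $B_1$.

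For the base step I would verify by induction on $n$ the two closed-form identities
\[
\phi^n(A_i) = A_1^n A_2^n \cdots A_{i-1}^n A_i\,\ov{A}_{i-1}^n \cdots \ov{A}_2^n \ov{A}_1^n \quad\text{and}\quad \phi^n(B_1) = A_1^n A_2^n \cdots A_m^n B_1,
\]
each of length linear in $n$. The inductive step $n\mapsto n+1$ rests on a telescoping identity for the partial products, namely $\phi(A_1)^n\phi(A_2)^n\cdots\phi(A_i)^n = A_1^{n+1}\cdots A_{i-1}^{n+1} A_i^n\,\ov{A}_{i-1}\cdots\ov{A}_1$, which in turn follows from the massive cancellation $\ov{A}_{l-1}\cdots\ov{A}_1\cdot A_1\cdots A_l = A_l$ occurring between consecutive conjugated factors.

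With the base estimates in hand, the induction on $j\geq 2$ is direct. From $\phi^n(B_j) = \phi^{n-1}(\phi(B_j))$ one expands
\[
\phi^n(B_j) = \phi^{n-1}(A_1)\cdots\phi^{n-1}(A_m)\,\phi^{n-1}(B_1)\cdots\phi^{n-1}(B_{j-1})\cdot\phi^{n-1}(B_j)\cdot\phi^{n-1}(\ov{A}_{j-1})\cdots\phi^{n-1}(\ov{A}_1),
\]
and the triangle inequality combined with the base estimates and the inductive hypothesis $\|\phi^{n-1}(B_l)\|=O(n^l)$ for $l<j$ yields $\|\phi^n(B_j)\| \leq \|\phi^{n-1}(B_j)\|+O(n^{j-1})$; telescoping from $n=0$ gives $\|\phi^n(B_j)\|=O(n^j)$. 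Taking the maximum over the generators of $F^{(k)}$ finishes $\gr_{\phi_{m,k}}(n)\pr n^k$.

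The argument for $\phi^{-1}$ follows the same three-step scheme. The filtration $F^{(j)}$ is $\phi^{-1}$-invariant because $\phi$ is bijective on each $F^{(j)}$; one then derives the analogous closed forms $\phi^{-n}(A_i)=\ov{A}_1^n\cdots\ov{A}_{i-1}^n A_i A_{i-1}^n\cdots A_1^n$ and $\phi^{-n}(B_1)=\ov{A}_1^n\ov{A}_2^n\cdots\ov{A}_m^n B_1$ (checked most easily by applying $\phi$ to each side and observing that the same cancellations collapse the result to $\phi^{-(n-1)}(\cdot)$), and runs the telescoping induction exactly as above. The main obstacle throughout is identifying and verifying the precise cancellation patterns that force linear---rather than polynomial of higher degree---bounds in the base cases; once these are in place the rest of the proof is mechanical bookkeeping.
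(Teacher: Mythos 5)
Your proof is correct and uses essentially the same strategy as the paper: first establish linear closed forms for $\phi^{\pm n}(A_i)$ and $\phi^{\pm n}(B_1)$ by induction on $n$, then set up a recursive length estimate on $\phi^{\pm n}(B_j)$ and telescope to obtain polynomial growth of degree $j$. The one point of departure is in the bookkeeping of the recurrence. You expand $\phi^n(B_j)=\phi^{n-1}\big(\phi(B_j)\big)$ directly and invoke the inductive hypothesis on all $B_l$ with $l<j$, yielding the single-variable recurrence $\|\phi^n(B_j)\|\le\|\phi^{n-1}(B_j)\|+O(n^{j-1})$; the paper instead derives $\phi^n(B_{k+1})=\phi^n(B_k)\,\phi^{n-1}(A_1\cdots A_{k-1})\,\phi^{n-1}(B_{k+1})\,\phi^{n-1}(A_1\cdots A_k)^{-1}$ from the identity $\phi(B_{k+1})=\phi(B_k)\,(A_1\cdots A_{k-1})\,B_{k+1}\,\phi(\ov A_k\cdots\ov A_1)$, giving the two-variable recurrence $f(k+1,n)\le f(k,n)+f(k+1,n-1)+O(n)$, closed with an auxiliary dominating function $g$. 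Both close the same way via finite differences. For $\phi^{-1}$ your scheme is identical to the forward case (with $\phi^{-1}(B_j)=\ov A_1\cdots\ov A_{j-2}\,\ov B_{j-1}B_j\,A_{j-1}\cdots A_1$), which is somewhat more uniform than the paper's treatment via the auxiliary words $T_{k,i}$ and $S_i$.
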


We will need few basic lemmas.

\begin{lem}\label{lem1} For $i=1,\dots,m$,
\[
\pn(A_i)=A_1^nA_2^n\dots A_{i-1}^n\cdot A_i\cdot \ov{A}_{i-1}^n\dots \ov{A}_2^n\ov{A}_1^n.
\]
\end{lem}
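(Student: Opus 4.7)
The plan is to prove this by induction on $n$. The base case $n=1$ is exactly the formula for $\phi(A_i)$ read off from~\eqref{eq1}. For the inductive step, the cleanest thing to do is to rewrite the target formula as
\[
\phi^n(A_i) = u_{i,n}\, A_i\, u_{i,n}^{-1}, \qquad u_{i,n} := A_1^n A_2^n \cdots A_{i-1}^n,
\]
and to aim for $\phi^{n+1}(A_i) = u_{i,n+1} A_i u_{i,n+1}^{-1}$.

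I would use the decomposition $\phi^{n+1} = \phi^n \circ \phi$ (rather than $\phi \circ \phi^n$), so that I can apply the inductive hypothesis to each letter of $\phi(A_i) = A_1 A_2 \cdots A_{i-1}\, A_i\, \ov{A}_{i-1} \cdots \ov{A}_1$. Writing $v_j := u_{j,n}$, this gives
\[
\phi^{n+1}(A_i) = (v_1 A_1 v_1^{-1})(v_2 A_2 v_2^{-1}) \cdots (v_{i-1} A_{i-1} v_{i-1}^{-1}) (v_i A_i v_i^{-1}) (v_{i-1} \ov{A}_{i-1} v_{i-1}^{-1}) \cdots (v_1 \ov{A}_1 v_1^{-1}).
\]
The key telescoping identity, immediate from the definition of $v_j$, is $v_j^{-1} v_{j+1} = A_j^n$. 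Inserting this into the left-hand portion of the product collapses it to $A_1 \cdot A_1^n \cdot A_2 \cdot A_2^n \cdots A_{i-1} \cdot A_{i-1}^n = A_1^{n+1} A_2^{n+1} \cdots A_{i-1}^{n+1} = u_{i,n+1}$, and a symmetric collapse of the right-hand portion yields $u_{i,n+1}^{-1}$.

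There is no real conceptual obstacle here; the only thing to be careful about is the bookkeeping of indices in the telescoping — in particular, noting that $v_1 = 1$ so the outer $v_1$ factors disappear, and that the right-hand telescoping proceeds in reverse order using $v_{j+1}^{-1} v_j = \ov{A}_j^n$. After these collapses the computation gives exactly $u_{i,n+1} A_i u_{i,n+1}^{-1}$, completing the induction.
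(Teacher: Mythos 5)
Your proof is correct, and it uses the opposite decomposition of $\phi^{n+1}$ from the one the paper uses. The paper writes $\phi^{n+1}(A_i) = \phi\bigl(\phi^n(A_i)\bigr)$: it takes the long inductive expression $A_1^n\cdots A_{i-1}^n\,A_i\,\ov A_{i-1}^n\cdots\ov A_1^n$ and applies $\phi$ letter by letter, with the telescoping driven by the identity $w_j^{-1}w_{j+1}=A_j$ for $w_j:=A_1\cdots A_{j-1}$ (since $\phi(A_j^n)=w_j A_j^n w_j^{-1}$). You instead write $\phi^{n+1}(A_i)=\phi^n\bigl(\phi(A_i)\bigr)$: you take the short word $\phi(A_i)$ with $2i-1$ letters and apply the inductive hypothesis to each letter, telescoping via $v_j^{-1}v_{j+1}=A_j^n$ for $v_j:=A_1^n\cdots A_{j-1}^n$. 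Both are one-line telescoping computations of comparable length; yours has the mild aesthetic advantage that the factors collapsing are literally instances of the inductive formula, so the bookkeeping you flag (especially $v_1=1$ and the reversed telescope $v_{j+1}^{-1}v_j=\ov A_j^n$ on the right) is all that needs checking, and you have carried it out correctly.
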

\begin{proof*}
We prove the statement by induction on $n$, observing that it is true for $n=0,1$:
\begin{multline*}
\phi^{n+1}(A_i)=\phi(\phi^n(A_i))=\phi(A_1^nA_2^n\dots A_{i-1}^n)\cdot\phi(A_i)\cdot\phi(\ov{A}_{i-1}^n\dots \ov{A}_2^n\ov{A}_1^n)\\
=\phi(A_1^n)\phi(A_2^n)\phi(A_3^n)\dots \phi(A_{i-1}^n)\cdot\phi(A_i)\cdot\phi(\ov{A}_{i-1}^n)\dots\phi(\ov{A}_3^n)\phi(\ov{A}_2^n)\phi(\ov{A}_1^n)\\
=(A_1^n)(A_1A_2^n\ov{A}_1)(A_1A_2A_3^n\ov{A}_2\ov{A}_1)\dots(A_1\dots A_{i-2}A_{i-1}^n\ov{A}_{i-2}\dots \ov{A}_1)\\
\times(A_1\dots A_{i-1}A_{i}\ov{A}_{i-1}\dots \ov{A}_1)\times(A_1\dots A_{i-2}\ov{A}_{i-1}^n\ov{A}_{i-2}\dots \ov{A}_1)\dots(A_1A_2\ov{A}_3^n\ov{A}_2\ov{A}_1)\\
\times(A_1\ov{A}_2^n\ov{A}_1)(\ov{A}_1^n)=
A_1^{n+1}A_2^{n+1}\dots A_{i-1}^{n+1}\cdot A_i\cdot\ov{A}_{i-1}^{n+1}\dots \ov{A}_2^{n+1}\ov{A}_1^{n+1}.\tag*{$\qed$}
\end{multline*}
\end{proof*}

\begin{cor}\label{cor1}For $i=1,\dots,m$, 
\[\pushQED{\qed}
\|\pn(A_i)\|=2(i-1)n+1.\qedhere\popQED
\]
\end{cor}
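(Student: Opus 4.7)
The plan is essentially immediate from Lemma~\ref{lem1}: I would just observe that the expression
\[
\phi^n(A_i)=A_1^nA_2^n\cdots A_{i-1}^n\cdot A_i\cdot \ov{A}_{i-1}^n\cdots\ov{A}_2^n\ov{A}_1^n
\]
is a freely reduced word in the free generators $A_1,\dots,A_m,B_1,\dots,B_m$ of $F_{2m}$, and then count letters.

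The reduction check is where one should pause, though it is quick: the word is a concatenation of blocks of the form $A_j^{\pm n}$ with $j$ varying between consecutive blocks, and the middle letter is $A_i$, whose neighbors are $A_{i-1}$ on the left and $\ov A_{i-1}$ on the right. Since $A_1,\dots,A_m$ are distinct elements of a free basis, no two adjacent blocks share a letter, and in particular no cancellation occurs at any interface. Hence the word displayed above is reduced in $F_{2m}$.

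Counting letters then gives $(i-1)$ blocks $A_1^n,\dots,A_{i-1}^n$ contributing $(i-1)n$, the single central letter $A_i$ contributing $1$, and symmetrically $(i-1)$ blocks $\ov A_{i-1}^n,\dots,\ov A_1^n$ contributing another $(i-1)n$, for a total of $2(i-1)n+1$. No obstacle is expected; the only ``trap'' would be forgetting to verify that the word is reduced before reading off its length, but as noted this is automatic from the fact that the $A_j$'s form part of a free basis.
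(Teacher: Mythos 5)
Your argument is correct and matches the paper's (implicit) reasoning exactly: the paper marks the corollary with a \qed as an immediate consequence of Lemma~\ref{lem1}, and the only content is precisely the free-reduction check and letter count you carry out. Nothing to add.
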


\begin{lem}\label{lem2}
$\pn(B_1)=A_1^nA_2^n\dots A_{m}^n\cdot B_1.$
\end{lem}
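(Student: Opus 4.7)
The plan is to prove the lemma by induction on $n$. The base cases $n=0$ (trivial) and $n=1$ (the definition of $\phi$ on $B_1$ in the formulas~\eqref{eq1}) are immediate. For the inductive step, rather than applying $\phi$ on the outside to the hypothesized form $A_1^n A_2^n \cdots A_m^n B_1$ (which forces us to expand each $\phi(A_i)$ and is awkward to reduce), I would apply $\phi^n$ on the outside to $\phi(B_1)$. That is, I write
\[
\phi^{n+1}(B_1)=\phi^n\bigl(\phi(B_1)\bigr)=\phi^n(A_1)\phi^n(A_2)\cdots\phi^n(A_m)\cdot\phi^n(B_1),
\]
apply Lemma~\ref{lem1} to each $\phi^n(A_i)$, and apply the inductive hypothesis to $\phi^n(B_1)$.

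The key observation is that the pieces telescope. Set $u_i:=A_1^nA_2^n\cdots A_{i-1}^n$ (with $u_1=1$), so that Lemma~\ref{lem1} reads $\phi^n(A_i)=u_i\,A_i\,u_i^{-1}$. One checks directly that $u_i^{-1}u_{i+1}=A_i^n$. Consequently
\[
\phi^n(A_1)\phi^n(A_2)\cdots\phi^n(A_m)=A_1\,(u_1^{-1}u_2)\,A_2\,(u_2^{-1}u_3)\,A_3\cdots A_{m-1}\,(u_{m-1}^{-1}u_m)\,A_m\,u_m^{-1},
\]
which collapses to $A_1^{n+1}A_2^{n+1}\cdots A_{m-1}^{n+1}A_m\cdot\ov{A}_{m-1}^n\cdots\ov{A}_1^n$. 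Multiplying this on the right by the inductive expression $A_1^nA_2^n\cdots A_m^n\,B_1$, the tail $\ov{A}_{m-1}^n\cdots\ov{A}_1^n$ cancels the initial segment $A_1^n\cdots A_{m-1}^n$ pairwise from the inside out, leaving just $A_m^n$, which then merges with the preceding $A_m$ to give $A_m^{n+1}$. The outcome is exactly $A_1^{n+1}A_2^{n+1}\cdots A_m^{n+1}\,B_1$, completing the induction.

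There is no serious obstacle; the only thing to be careful about is tracking the free reductions during the telescoping, which is why it is convenient to work with the abbreviations $u_i$ and the identity $u_i^{-1}u_{i+1}=A_i^n$ rather than expanding everything in sight. As an immediate consequence, one obtains $\|\phi^n(B_1)\|=mn+1$, which will feed into the subsequent length estimates in this section.
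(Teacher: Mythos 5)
Your proof is correct, and it is essentially a mirror image of the paper's. The paper writes $\phi^{n+1}(B_1)=\phi\bigl(\phi^n(B_1)\bigr)$, applies the inductive hypothesis inside, and then expands each factor $\phi(A_i^n)=(A_1\cdots A_{i-1})A_i^n(\ov{A}_{i-1}\cdots\ov{A}_1)$ directly from the defining formulas; you instead write $\phi^{n+1}(B_1)=\phi^n\bigl(\phi(B_1)\bigr)$ and expand $\phi^n(A_i)=u_iA_iu_i^{-1}$ via Lemma~\ref{lem1}. Both products telescope via the same mechanism (adjacent conjugating prefixes cancel down to a single $A_i^n$ or $A_i$), so your worry that the paper's order would be ``awkward to reduce'' is unfounded --- the paper's version collapses just as cleanly. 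The one genuine economy in your route is that your intermediate computation
\[
\phi^n(A_1)\phi^n(A_2)\cdots\phi^n(A_m)=A_1^{n+1}A_2^{n+1}\cdots A_{m-1}^{n+1}\,A_m\,\ov{A}_{m-1}^n\cdots\ov{A}_1^n
\]
is exactly the statement of Lemma~\ref{lem3}, which the paper establishes separately by an almost identical telescoping induction; your organization gets that identity as a free byproduct. The final cancellation of the tail $\ov{A}_{m-1}^n\cdots\ov{A}_1^n$ against the head $A_1^n\cdots A_{m-1}^n$ of the inductive expression is handled correctly, and the closing observation $\|\phi^n(B_1)\|=mn+1$ agrees with Corollary~\ref{cor2}.
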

\begin{proof*}
The statement is true for $n=0,1$. By induction,
\begin{multline*}
\phi^{n+1}(B_1)=\phi(\phi^n(B_1))=\phi(A_1^nA_2^n\dots A_{m}^n)\cdot\phi(B_1)\\
=(A_1^n)(A_1A_2^n\ov{A}_1)(A_1A_2A_3^n\ov{A}_2\ov{A}_1)\dots(A_1\dots A_{m-1}A_{m}^n\ov{A}_{m-1}\dots \ov{A}_1)\\ 
\times(A_1A_2\dots A_{m}\cdot B_1)=A_1^{n+1}A_2^{n+1}\dots A_{m}^{n+1}\cdot B_1.\tag*{$\qed$}
\end{multline*}
\end{proof*}

\begin{cor}\label{cor2}
$\|\pn(B_1)\|=mn+1.$\qed
\end{cor}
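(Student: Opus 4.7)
The proof plan is essentially to read the length directly off the closed-form expression provided by Lemma~\ref{lem2}. That lemma gives $\pn(B_1) = A_1^n A_2^n \dots A_m^n \cdot B_1$, so the claim $\|\pn(B_1)\| = mn+1$ reduces to checking that this word is already freely reduced in $F_{2m}$ on the free generating set $\{A_1,\dots,A_m,B_1,\dots,B_m\}$. Since the word length $\|\cdot\|$ is by definition the length of the freely reduced representative, the corollary will follow by counting letters.

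To verify that the word is reduced, I would examine each junction. Within any single block $A_i^n$ all letters are identical powers of the same generator, so no cancellation occurs internally. Between adjacent blocks $A_i^n$ and $A_{i+1}^n$ the bordering letters are $A_i$ and $A_{i+1}$, which are distinct free generators and hence not inverses of one another, so no cancellation occurs at the seam. At the final junction between $A_m^n$ and $B_1$ the bordering letters involve generators of different types, so again no cancellation occurs. Summing block lengths gives $\underbrace{n+\dots+n}_{m}+1 = mn+1$, as required.

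There is no real obstacle here: the substantive work was already carried out in Lemma~\ref{lem2}, and the corollary is simply the observation that the word produced by that lemma has no free-group cancellations. This mirrors exactly how Corollary~\ref{cor1} falls out of Lemma~\ref{lem1}, where the palindromic conjugate form $A_1^n\dots A_{i-1}^n A_i \ov{A}_{i-1}^n \dots \ov{A}_1^n$ is likewise reduced because consecutive letters always involve distinct generators.
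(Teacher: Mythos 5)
Your proposal is correct and follows the paper's intended route: the corollary is stated with an immediate \qed because it is nothing more than reading off the length of the freely reduced word $A_1^nA_2^n\cdots A_m^n\cdot B_1$ produced by Lemma~\ref{lem2}. Your explicit check that no cancellations occur at block seams is exactly the (unstated) verification the paper relies on.
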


\begin{lem}\label{lem3}
$\pn(A_1A_2\dots A_{m})=A_1^{n+1}A_2^{n+1}\dots A_{m-1}^{n+1}\cdot A_{m}\cdot\ov{A}_{m-1}^n\dots \ov{A}_2^n\ov{A}_1^n.$
\end{lem}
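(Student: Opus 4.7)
\begin{proof*}[Proof proposal]
The plan is to reduce this to Lemma~\ref{lem1} via a telescoping cancellation. Since $\phi^n$ is a homomorphism, we have
\[
\phi^n(A_1A_2\dots A_m)=\phi^n(A_1)\phi^n(A_2)\cdots\phi^n(A_m),
\]
and Lemma~\ref{lem1} gives an explicit expression for each factor, namely $\phi^n(A_i)=A_1^n\dots A_{i-1}^n\cdot A_i\cdot\ov{A}_{i-1}^n\dots\ov{A}_1^n$. The key observation is that the tail of $\phi^n(A_i)$ is the inverse of the head of $\phi^n(A_{i+1})$ up through the $\ov{A}_{i-1}^n\dots \ov{A}_1^n$ block, so these blocks cancel freely, and the remaining $A_i\cdot A_i^n$ fuses into $A_i^{n+1}$.

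I would formalize this by induction on $j$, defining $P_j:=\phi^n(A_1A_2\dots A_j)$ and proving that
\[
P_j=A_1^{n+1}A_2^{n+1}\dots A_{j-1}^{n+1}\cdot A_j\cdot\ov{A}_{j-1}^n\dots\ov{A}_2^n\ov{A}_1^n.
\]
The base case $j=1$ is just $\phi^n(A_1)=A_1$ by Lemma~\ref{lem1}. For the inductive step, assuming the formula for $P_j$, we compute
\[
P_{j+1}=P_j\cdot\phi^n(A_{j+1})=\bigl[A_1^{n+1}\dots A_{j-1}^{n+1}A_j\,\ov{A}_{j-1}^n\dots\ov{A}_1^n\bigr]\cdot\bigl[A_1^n\dots A_j^nA_{j+1}\,\ov{A}_j^n\dots\ov{A}_1^n\bigr].
\]
The middle block $\ov{A}_{j-1}^n\dots\ov{A}_1^n\cdot A_1^n\dots A_{j-1}^n$ reduces to the identity, leaving $A_j\cdot A_j^n=A_j^{n+1}$ in the middle, and the result matches the claimed form of $P_{j+1}$. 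Taking $j=m$ finishes the proof.

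There is essentially no obstacle here; the only thing to watch is that no further cancellation occurs between the surviving $A_i^{n+1}$ blocks and their neighbours (they are distinct generators of the free group, so this is automatic) and that the edge cases $j=1$ and $j=2$ of the induction are consistent with the convention that empty products equal the identity.
\end{proof*}
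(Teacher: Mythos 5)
Your proof is correct, but it takes a slightly different route from the paper. The paper proves Lemma~\ref{lem3} by induction on $n$: it applies $\phi$ to the conjectured formula for $\phi^n(A_1\cdots A_m)$ and simplifies directly from the defining formulas for $\phi(A_i)$, exactly parallel to its proof of Lemma~\ref{lem1}. You instead fix $n$, invoke Lemma~\ref{lem1} for each factor $\phi^n(A_i)$, and telescope the product $\phi^n(A_1)\cdots\phi^n(A_m)$, organizing the cancellation as an induction on the number of factors $j$ via the partial products $P_j$. Your inductive step is right: the block $\ov{A}_{j-1}^n\cdots\ov{A}_1^n\cdot A_1^n\cdots A_{j-1}^n$ cancels, $A_j\cdot A_j^n$ fuses to $A_j^{n+1}$, and freeness of the generators rules out any further cancellation, so $P_m$ is the claimed reduced word. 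What your approach buys is economy: Lemma~\ref{lem3} becomes a direct corollary of Lemma~\ref{lem1} with no second induction on $n$; what the paper's approach buys is uniformity, since the same template (apply $\phi$ to the inductive formula) is used for Lemmas~\ref{lem1}, \ref{lem2} and \ref{lem3} alike. Either argument is complete and acceptable.
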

\begin{proof*}
We do induction on $n$, the case $n=0$ being evident:
\begin{multline*}
\phi^{n+1}(A_1A_2\dots A_{m})=\phi(\phi^n(A_1A_2\dots A_{m}))=\phi(A_1^{n+1})\phi(A_2^{n+1})\dots \phi(A_{m-1}^{n+1})\\
\times\phi(A_m)\cdot\phi(\ov{A}_{m-1}^{n})\dots\phi(\ov{A}_2^{n})\phi(\ov{A}_1^{n})=
(A_1^{n+1})(A_1A_2^{n+1}\ov{A}_1)(A_1A_2A_3^{n+1}\ov{A}_2\ov{A}_1)\dots\\
\times(A_1\dots A_{m-2}A_{m-1}^{n+1}\ov{A}_{m-2}\dots \ov{A}_1)\cdot
(A_1\dots A_{m-1}A_{m}\ov{A}_{m-1}\dots \ov{A}_1)\\
\times(A_1\dots A_{m-2}\ov{A}_{m-1}^n\ov{A}_{m-2}\dots \ov{A}_1)\dots(A_1A_2\ov{A}_3^n\ov{A}_2\ov{A}_1)\cdot(A_1\ov{A}_2^n\ov{A}_1)(\ov{A}_1^n)\\
=A_1^{n+2}A_2^{n+2}\dots A_{m-1}^{n+2}\cdot A_m\cdot\ov{A}_{m-1}^{n+1}\dots \ov{A}_2^{n+1}\ov{A}_1^{n+1}.\tag*{$\qed$}
\end{multline*}
\end{proof*}

\begin{lem}\label{lem4}
$\|\pn(B_2)\|=\frac m2n^2+(\frac m2+2)n+1.$
\end{lem}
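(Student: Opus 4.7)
The plan is to derive a clean recursion for $\phi^n(B_2)$ and then sum the resulting lengths.

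Writing $\phi^n(B_2) = \phi^{n-1}(\phi(B_2)) = \phi^{n-1}(A_1 A_2 \cdots A_m \cdot B_1 B_2 \cdot \bar A_1)$ and applying Lemmas~\ref{lem1}, \ref{lem2}, \ref{lem3}, we obtain
\[
\phi^n(B_2) = \phi^{n-1}(A_1\cdots A_m)\cdot \phi^{n-1}(B_1)\cdot \phi^{n-1}(B_2) \cdot \bar A_1
\]
\[
= \bigl(A_1^n\cdots A_{m-1}^n A_m \bar A_{m-1}^{n-1}\cdots \bar A_1^{n-1}\bigr)\cdot \bigl(A_1^{n-1}\cdots A_m^{n-1} B_1\bigr)\cdot \phi^{n-1}(B_2)\cdot \bar A_1.
\]
The first plan-step is to observe that the middle block collapses: $\bar A_1^{n-1} A_1^{n-1}$ cancels, then $\bar A_2^{n-1}A_2^{n-1}$, and so on, leaving only $A_m^{n-1}$. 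Combining this with $A_m \cdot A_m^{n-1}=A_m^n$ yields the clean recursion
\[
\phi^n(B_2) \;=\; A_1^n A_2^n\cdots A_m^n \cdot B_1\cdot \phi^{n-1}(B_2)\cdot \bar A_1.
\]

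The next step, and the one requiring the most care, is to verify that the right-hand side is a freely reduced word in $\{A_i,B_j\}^{\pm1}$. Since $B_1$ and $B_j$'s are distinct free generators, the only splicing points where cancellation could occur are $B_1\cdot \phi^{n-1}(B_2)$ and $\phi^{n-1}(B_2)\cdot \bar A_1$. An easy induction based on the recursion shows that for $n\ge 2$ the word $\phi^{n-1}(B_2)$ begins with the letter $A_1$ and ends with $\bar A_1$, so at the first splice $B_1$ meets $A_1$ (no cancellation) and at the second splice $\bar A_1$ meets $\bar A_1$ (which merely lengthens the trailing $\bar A_1$-run, no cancellation). For $n=1$ the word $A_1\cdots A_m\cdot B_1 B_2\cdot \bar A_1$ is reduced by direct inspection.

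With reducedness confirmed, the lengths satisfy
\[
\|\phi^n(B_2)\|=\|\phi^{n-1}(B_2)\|+mn+2,\qquad \|\phi^0(B_2)\|=1,
\]
and iterating gives
\[
\|\phi^n(B_2)\|=1+\sum_{k=1}^n(mk+2)=1+\frac{m\,n(n+1)}{2}+2n=\tfrac{m}{2}n^2+\bigl(\tfrac{m}{2}+2\bigr)n+1,
\]
which is the claimed formula. The main obstacle is purely the reduction bookkeeping: once one trusts the cancellation of the middle block and the non-cancellation at the two splice points, the rest is a one-line arithmetic sum.
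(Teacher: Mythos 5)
Your proof is correct and follows essentially the same route as the paper's: both rest on the identity $\phi(B_2)=\phi(B_1)\cdot B_2\cdot\ov A_1$, which yields your one-step recursion $\phi^n(B_2)=\phi^n(B_1)\cdot\phi^{n-1}(B_2)\cdot\ov A_1$. The paper unwinds this all the way to the expanded word $\phi^n(B_1)\phi^{n-1}(B_1)\cdots\phi(B_1)\cdot B_2\cdot\ov A_1^n$ and sums the lengths of the pieces, whereas you accumulate the length increments step by step and explicitly verify free reducedness (which the paper leaves implicit) --- the same bookkeeping in slightly different packaging.
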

\begin{proof*}
One observes that 
\[
\phi(B_2)=\phi(B_1)\cdot B_2\cdot \ov{A}_1.
\]
This gives by induction in view of Lemma~\ref{lem2}:
\begin{multline*}
\pn(B_2)=\pn(B_1)\phi^{n-1}(B_1)\dots \phi(B_1)\cdot B_1\cdot \ov{A}_1^n\\
=(A_1^nA_2^n\dots A_{m}^n B_1)\cdot(A_1^{n-1}A_2^{n-1}\dots A_{m}^{n-1} B_1)\dots 
(A_1A_2\dots A_{m} B_1)\cdot B_1\cdot \ov{A}_1^n.
\end{multline*}
Hence, 
\begin{multline*}
\|\pn(B_2)\|=[mn+1]+[m(n-1)+1]+\ldots+[m+1]+1+n\\
=m\tfrac{n(n+1)}{2}+2n+1=\tfrac m2n^2+\left(\tfrac m2+2\right)n+1.\tag*{$\qed$}
\end{multline*}
\end{proof*}

\begin{claim}\label{claim1}For $k=1,\dots,m$,
\[
\|\pn(B_k)\|\pr n^k.
\]
\end{claim}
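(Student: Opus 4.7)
The plan is to induct on $k$, with base case $k=1$ supplied by Corollary~\ref{cor2}. The crucial observation for the inductive step is that by comparing the defining formulas for $\phi(B_{k-1})$ and $\phi(B_k)$ in~(\ref{eq1}), both sides of the identity
\[
\phi(B_k) \;=\; \phi(B_{k-1})\cdot A_1A_2\cdots A_{k-2}\cdot B_k\cdot \ov{A}_{k-1}\ov{A}_{k-2}\cdots\ov{A}_1
\]
reduce to the same word $A_1A_2\cdots A_m\,(B_1B_2\cdots B_k)\,\ov{A}_{k-1}\cdots\ov{A}_1$ (where for $k=2$ the middle $A$-word is empty). This is the analogue of the identity $\phi(B_2) = \phi(B_1)\cdot B_2\cdot\ov{A}_1$ used in Lemma~\ref{lem4}.

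Applying $\phi^{n-1}$ to this identity and using that $\phi$ is a homomorphism yields the two-parameter recursion
\[
\phi^n(B_k) \;=\; \phi^n(B_{k-1})\cdot \phi^{n-1}(A_1\cdots A_{k-2})\cdot \phi^{n-1}(B_k)\cdot \phi^{n-1}(\ov{A}_{k-1}\cdots\ov{A}_1).
\]
Taking word lengths, applying the triangle inequality, invoking the inductive hypothesis $\|\phi^n(B_{k-1})\|\le C_{k-1}n^{k-1}$, and bounding $\|\phi^{n-1}(A_i)\|\le D n$ for each $i$ via Corollary~\ref{cor1}, one gets
\[
\|\phi^n(B_k)\| \;\le\; \|\phi^{n-1}(B_k)\| + C_{k-1} n^{k-1} + D' n
\]
for suitable constants $C_{k-1}, D' > 0$ independent of $n$. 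Iterating this inequality down to $\|\phi^0(B_k)\|=1$ and summing the resulting telescoping series gives
\[
\|\phi^n(B_k)\| \;\le\; 1 + \sum_{i=1}^{n}\bigl(C_{k-1} i^{k-1} + D' i\bigr) \;\le\; C_k n^k,
\]
completing the inductive step.

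The only nontrivial step is verifying the recursion for $\phi(B_k)$ in terms of $\phi(B_{k-1})$; the rest is a routine double induction. Once this claim is established, combined with Corollary~\ref{cor1} it immediately gives the upper bound $\gr_{\phi_{m,k}}(n)\preceq n^k$ asserted in Proposition~\ref{prop1}, since the generators of $F_{m+k}$ are $A_1,\dots,A_m,B_1,\dots,B_k$ and $\|\phi^n(A_i)\|\preceq n \preceq n^k$.
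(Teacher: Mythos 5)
Your proof is correct and follows essentially the same route as the paper: you establish the same recursion $\phi^n(B_k)=\phi^n(B_{k-1})\cdot\phi^{n-1}(A_1\cdots A_{k-2})\cdot\phi^{n-1}(B_k)\cdot\phi^{n-1}(A_1\cdots A_{k-1})^{-1}$, bound the $A$-factors linearly via Corollary~\ref{cor1} (the paper uses Lemma~\ref{lem3} instead, giving tighter but equally sufficient linear bounds), and then do a double induction on $(k,n)$ — you telescope directly and sum $\sum_{i\le n}i^{k-1}=O(n^k)$, whereas the paper wraps the same telescoping in an auxiliary dominating function $g(k,n)$ and argues via finite differences. Incidentally, your version of the one-step identity is the corrected one; the paper's displayed $\phi(B_{k+1})=\phi(B_k)\cdot(A_1\cdots A_{k-1})\cdot B_{k+1}\cdot\phi(\ov{A}_k\cdots\ov{A}_1)$ has a spurious $\phi$ in the last factor, but its subsequent formula~(\ref{eq2}) matches yours.
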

\begin{proof} From the formulas~\eqref{eq1} we get for all $k\geq2$,
\[
\phi(B_{k+1})=\phi(B_k)\cdot(A_1\dots A_{k-1})\cdot B_{k+1}\cdot\phi(\ov{A}_k\dots\ov{A}_1).
\]
Therefore,
\begin{equation*}\tag{2}\label{eq2}
\pn(B_{k+1})=\pn(B_k)\cdot\phi^{n-1}(A_1\dots A_{k-1})\cdot\phi^{n-1}(B_{k+1})\cdot\phi^{n-1}(A_1\dots A_k)^{-1}.
\end{equation*}
Lemma~\ref{lem3} gives:
\begin{align*}
\|\phi^{n-1}(A_1\dots A_{k-1})\|&=2(k-2)(n-1)+(k-1),\\
\|\phi^{n-1}(A_1\dots A_{k})^{-1}\|&=2(k-1)(n-1)+k,
\end{align*}
so that the total length of $\pn(B_{k+1})$ is bounded above by 
\[
\|\pn(B_k)\|+\|\phi^{n-1}(B_{k+1})\|+[(4k-6)n-(2k-5)].
\]
Now if we denote
\[
f(k,n):=\|\pn(B_k)\|,
\]
we will have
\begin{itemize}
\item $f(k,0)=1$;
\item $f(1,n)=mn+1$, by Corollary~\ref{cor2};
\item $f(2,n)=\frac m2n^2+(\frac m2+2)+1$, by Lemma~\ref{lem4};
\end{itemize}
and for $k\geq 2$,
\[
f(k+1,n)\leq f(k,n)+f(k+1,n-1)+[(4k-6)n-(2k-5)].
\]
We have an inequality here (instead of an equality) because in the formula~\eqref{eq2} there can be some cancellations. Let's define another function $g(k,n)$ as follows:
\begin{itemize}
\item $g(k,0)=1$;
\item $g(1,n)=f(1,n)=mn+1$,
\item $g(2,n)=f(2,n)=\frac m2n^2+(\frac m2+2)+1$,
\item $g(k+1,n)=g(k,n)+g(k+1,n-1)+[(4k-6)n-(2k-5)]$, for $k\geq2$.
\end{itemize}

Obviously, $g$ is well-defined in a recurrent fashion. An easy induction shows that
\[
f(k,n)\leq g(k,n)\text{\quad for all $k\geq 1$, $n\geq0$,}
\]
so that $g(k,n)$ gives an upper bound for the growth of $\|\pn(B_k)\|$.

To estimate the order of growth of $g(k,n)$, let's look at finite differences in $n$:
\[
g(k+1,n)-g(k+1,n-1)=g(k,n)+[(4k-6)n-(2k-5)]
\]
so if we assume by induction that $g(k,n)$ is a polynomial in $n$ of degree $k$, then we conclude that $g(k+1,n)$ is a polynomial of degree $k+1$ in $n$.

Since this assumption is true for $k=1$ and $2$, this proves Claim~\ref{claim1}.
\end{proof}

\bigskip
Now we establish a similar bound for $\phii$. One could use the train-track machinery (along the lines of~\cite[Th.~0.4]{Pig}) to prove that the growth of $\phii$ is the same as the growth of $\phi$, when it is polynomial, but we give here a simple direct proof.

One easily checks that the inverse automorphism $\phii$ acts as follows:
\begin{align*}\tag{3}\label{eq3}
\quad\phii\colon\quad & A_1\longmapsto A_1\\
& A_2\longmapsto \ov{A}_1\,(A_2)\,A_1\\
& A_3\longmapsto \ov{A}_1\ov{A}_2\,(A_3)\,A_2A_1\\
&\dots\\
& A_{m}\longmapsto \ov{A}_1\ov{A}_2\dots\ov{A}_{m-1}\,(A_m)\,{A}_{m-1}\dots{A}_2{A}_1\\[1em]
&B_1\longmapsto \ov{A}_1\ov{A}_2\dots\ov{A}_{m-1}\ov{A}_m\cdot B_1\\
&B_2\longmapsto (\ov{B}_1B_2)A_1\\
&B_3\longmapsto \ov{A}_1\,(\ov{B}_2B_3)\,A_2A_1\\
&\dots\\
&B_{m}\longmapsto \ov{A}_1\ov{A}_2\ldots\ov{A}_{m-2}\,(\ov{B}_{m-1}B_m)\,A_{m-1}\ldots{A}_2{A}_1.
\end{align*}

\begin{lem}\label{lem5}For $i=1,\dots,m$,
\[
\|\pu(A_i)\|=\|\pn(A_i)\|=2(i-1)n+1.
\]
\end{lem}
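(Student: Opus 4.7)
The plan is to mirror the proof of Lemma~\ref{lem1} verbatim, with the roles of $A_j$ and $\bar{A}_j$ exchanged. By Corollary~\ref{cor1} the equality $\|\phi^n(A_i)\|=2(i-1)n+1$ is already established, so only the identity $\|\phi^{-n}(A_i)\|=2(i-1)n+1$ needs proof. I would establish this by proving the explicit formula
\[
\phi^{-n}(A_i) \;=\; \bar{A}_1^n\bar{A}_2^n\cdots\bar{A}_{i-1}^n\,\cdot\,A_i\,\cdot\,A_{i-1}^n\cdots A_2^n A_1^n
\]
by induction on $n$. The right-hand side is a freely reduced word in $\{A_1,\dots,A_m\}^{\pm}$ because consecutive syllables always involve distinct generators, so its word length is exactly the number of letters, namely $2(i-1)n+1$.

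The base cases $n=0$ and $n=1$ are immediate: $n=0$ is trivial and $n=1$ is precisely the formula for $\phi^{-1}(A_i)$ in~\eqref{eq3}. For the inductive step, I would apply $\phi^{-1}$ to the assumed expression for $\phi^{-n}(A_i)$ and use~\eqref{eq3}. The key telescoping observation is that, for each $j$,
\[
\phi^{-1}(A_j)^n \;=\; (\bar{A}_1\bar{A}_2\cdots\bar{A}_{j-1}\,A_j\,A_{j-1}\cdots A_1)^n \;=\; \bar{A}_1\bar{A}_2\cdots\bar{A}_{j-1}\,A_j^n\,A_{j-1}\cdots A_1,
\]
the inner $A_{j-1}\cdots A_1$ cancelling with the subsequent $\bar{A}_1\cdots\bar{A}_{j-1}$ across factors. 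Substituting these simplified powers into $\phi^{-1}(\bar{A}_1^n)\cdots\phi^{-1}(\bar{A}_{i-1}^n)\,\phi^{-1}(A_i)\,\phi^{-1}(A_{i-1}^n)\cdots\phi^{-1}(A_1^n)$ and performing the cascade of cancellations between consecutive factors (identical in structure to the cancellation cascade in Lemma~\ref{lem1}) yields exactly
\[
\phi^{-(n+1)}(A_i) \;=\; \bar{A}_1^{n+1}\bar{A}_2^{n+1}\cdots\bar{A}_{i-1}^{n+1}\,\cdot\,A_i\,\cdot\,A_{i-1}^{n+1}\cdots A_2^{n+1}A_1^{n+1},
\]
which completes the induction.

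The whole argument is pure bookkeeping; there is no genuinely new idea beyond observing that the combinatorics of $\phi^{-1}$ acting on $\{A_i\}$ is symmetric to that of $\phi$ under the swap $A_j\leftrightarrow\bar{A}_j$. The only mild point of care is verifying that the displayed word is reduced, which follows because the letter $A_{i-1}$ (or $\bar{A}_{i-1}$) never sits adjacent to its own inverse in the expression. Thus no obstacle of substance arises, and the lemma will follow at once.
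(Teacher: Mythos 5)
Your proof is correct, but you take a longer route than the paper. You establish the explicit formula
\[
\phi^{-n}(A_i)=\bar{A}_1^n\cdots\bar{A}_{i-1}^n\,A_i\,A_{i-1}^n\cdots A_1^n
\]
by repeating the telescoping-cancellation induction of Lemma~\ref{lem1} with $A_j\leftrightarrow\bar A_j$ swapped, then read off the length. This works, and you correctly note that the word on the right is reduced. The paper instead packages precisely the symmetry you observe (``$\phi^{-1}$ on $\{A_i\}$ is $\phi$ with $A_j\leftrightarrow\bar A_j$'') into a single algebraic identity: defining $\iota\colon A_j\mapsto\bar A_j$ on $H=\langle A_1,\dots,A_m\rangle$, one checks $\phi^{-1}|_H=\iota\circ(\phi|_H)\circ\iota^{-1}$, whence $\phi^{-n}|_H=\iota\circ(\phi^n|_H)\circ\iota^{-1}$; since $\iota$ is length-preserving, $\|\phi^{-n}(A_i)\|=\|\phi^n(A_i)\|$ follows immediately from Corollary~\ref{cor1} with no new induction. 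The paper's argument is shorter and avoids re-running the cancellation cascade; your argument has the mild advantage of producing the explicit closed form for $\phi^{-n}(A_i)$, which is more information than the lemma strictly asks for.
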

\begin{proof}
Define an automorphism $\iota\colon H\longrightarrow H$ of the subgroup $H=\la A_1, \dots, A_m\ra$ given by $\iota\colon A_j\mapsto\ov{A}_j$, $j=1,\dots,m$. One easily checks that
\[
\phii|_H=\iota\circ(\phi|_H)\circ\iota^{-1},
\]
therefore $\|\pu(A_i)\|=\|\pn(A_i)\|$ and the result follows from Corollary~\ref{cor1}.
\end{proof}

\begin{claim}\label{claim2}For $k=1,\dots,m$,
\[
\|\pu(B_k)\|\pr n^k.
\]
\end{claim}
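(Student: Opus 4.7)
The plan is to mirror the proof of Claim~\ref{claim1}, substituting the formulas~\eqref{eq3} for $\phii$ in place of~\eqref{eq1}. The first step is to establish the inverse analog of Lemma~\ref{lem2}:
\[
\pu(B_1) = \ov{A}_1^n \ov{A}_2^n \cdots \ov{A}_m^n\cdot B_1,\qquad \|\pu(B_1)\| = mn+1,
\]
which I would prove by induction on $n$. Inverting the formula for $\phii(A_i)$ in~\eqref{eq3} gives $\phii(\ov{A}_i) = \ov{A}_1\cdots\ov{A}_{i-1}\ov{A}_i A_{i-1}\cdots A_1$, and raising to the $n$-th power telescopes consecutive copies to $\phii(\ov{A}_i^n) = \ov{A}_1\cdots\ov{A}_{i-1}\ov{A}_i^n A_{i-1}\cdots A_1$. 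Substituting these into $\phii(\ov{A}_1^n\cdots\ov{A}_m^n B_1)$, the tails $A_{i-1}\cdots A_1$ of one factor cancel completely with the heads $\ov{A}_1\cdots\ov{A}_{i-1}$ of the next, and each surviving exponent becomes $n+1$, yielding the case $n+1$.

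Second, I would set up a recurrence for $\|\pu(B_{k+1})\|$ by applying $\phi^{-(n-1)}$ to the identity
\[
\phii(B_{k+1}) = \ov{A}_1\ov{A}_2\cdots\ov{A}_{k-1}\,\ov{B}_k B_{k+1}\,A_k\cdots A_2 A_1
\]
read off from~\eqref{eq3}. This gives
\[
\pu(B_{k+1}) = \phi^{-(n-1)}(\ov{A}_1\cdots\ov{A}_{k-1})\cdot\phi^{-(n-1)}(\ov{B}_k)\cdot\phi^{-(n-1)}(B_{k+1})\cdot\phi^{-(n-1)}(A_k\cdots A_1).
\]
Since Lemma~\ref{lem5} together with the triangle inequality yields $\|\phi^{-(n-1)}(A_1\cdots A_j)\|\le j(j-1)(n-1)+j$, taking lengths produces
\[
\|\pu(B_{k+1})\|\le \|\phi^{-(n-1)}(B_k)\|+\|\phi^{-(n-1)}(B_{k+1})\|+C(k)\cdot n
\]
for some constant $C(k)$ depending only on $k$.

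Finally, I would close the induction exactly as in the proof of Claim~\ref{claim1}. Setting $\tilde g(k,n):=\|\pu(B_k)\|$, the base case $\tilde g(1,n)=mn+1\pr n$ was verified in the first step. Assuming inductively that $\tilde g(k,n)\pr n^k$, the recurrence becomes $\tilde g(k+1,n)-\tilde g(k+1,n-1)\pr n^k$, and telescoping in $n$ produces $\tilde g(k+1,n)\pr n^{k+1}$. I expect the main obstacle to be purely bookkeeping — tracking the exact cancellation pattern that yields the base-case formula for $\pu(B_1)$ — since once that formula is in hand, the recurrence and the induction are an almost verbatim adaptation of the argument for Claim~\ref{claim1}.
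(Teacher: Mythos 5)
Your proof is correct and reaches the bound by essentially the same strategy as the paper: establish the exact formula $\pu(B_1)=\ov{A}_1^n\cdots\ov{A}_m^nB_1$ (so $\|\pu(B_1)\|=mn+1$), set up a two-index recurrence controlling $\|\pu(B_{k+1})\|$ in terms of $\|\phi^{-(n-1)}(B_k)\|$ and $\|\phi^{-(n-1)}(B_{k+1})\|$ plus a lower-order term, and close by finite-differencing / telescoping in $n$. The route differs from the paper's in one pleasant respect. The paper introduces auxiliary elements $T_{k,i}=B_k A_{k-1}^i\cdots A_1^i$ and $S_i=\ov{A}_1^i\cdots\ov{A}_m^i$, proves $\phii(T_{k,i})=\ov{T_{k-1,1}}\,T_{k,i+1}$ and $\phii(S_iT_{1,1})=S_{i+1}T_{1,1}$, and runs a three-parameter recurrence on $g(k,i,n)$; the extra index $i$ encodes exactly the growing $A$-tails, and the linear term $g(k,i+n,0)=(k-1)(i+n)+1$ is what you absorb as $C(k)\cdot n$. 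You avoid $i$ entirely by instead applying $\phi^{-(n-1)}$ to the four factors of $\phii(B_{k+1})$ individually and using Lemma~\ref{lem5} plus the triangle inequality to bound the two $A$-words by $C(k)\cdot n$ — the same linear estimate the paper keeps track of implicitly. The resulting recurrence $\tilde g(k+1,n)\le\tilde g(k,n-1)+\tilde g(k+1,n-1)+C(k)n$ and the telescoping step are structurally identical to the paper's, so the argument closes in the same way. One small caveat: the difference $\tilde g(k+1,n)-\tilde g(k+1,n-1)$ need not be non-negative, so $\pr$ should be read here as a one-sided upper bound, which is what the telescoping actually uses — this is harmless but worth keeping in mind. (You may also wish to note that your derivation incidentally shows the $i+2$ in the paper's displayed identity $\phii(T_{k,i})=\ov{T_{k-1,1}}\,T_{k,i+2}$ should read $i+1$; the paper's subsequent recurrence is stated with $i+1$, so this is only a typo in the display.)
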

\begin{proof}
Denote for any $i\geq0$, $k\geq2$:
\begin{align*}
T_{1,i}&:=B_1,\\
T_{k,i}&:=B_k\cdot A_{k-1}^iA_{k-2}^i\dots A_2^iA_1^i,\\
S_i&:=\ov{A}_1^i\ov{A}_2^i\dots\ov{A}_{m-1}^i\ov{A}_m^i.
\end{align*}
In this notation, the action of $\phii$ can be written as follows:
\begin{align*}\tag{4}\label{eq4}
\phii(T_{k,i})&=\ov{T_{k-1,1}}\cdot T_{k,i+2},\\
\phii(S_i\cdot T_{1,1})&=S_{i+1}\cdot T_{1,1}.
\end{align*}
The first relation is obvious, and the second one follows by easy induction.

\begin{lem}\label{lem6}
$\|\pu(B_1)\|=mn+1$.
\end{lem}
\begin{proof*}
Indeed, 
\begin{multline*}
\pu(B_1)=\pu(S_0\cdot T_{1,1})=\phi^{-(n-1)}(S_1\cdot T_{1,1})=\phi^{-(n-2)}(S_2\cdot T_{1,1})=\dots\\
=\phi^{-2}(S_{n-2}\cdot T_{1,1})=\phii(S_{n-1}\cdot T_{1,1})=S_n\cdot T_{1,1}.\tag*{$\qed$}
\end{multline*}
\end{proof*}

Define for any $k\geq1$, $i\geq0$, $n\geq0$ a function $f(k,i,n)$ as follows:
\begin{itemize}
\item $f(1,i,n)=mn+1$;
\item $f(k,i,n)=\|\pu(T_{k,i})\|$, for $k\geq2$.
\end{itemize}
Then relations~\eqref{eq4} imply
\[
f(k,i,n+1)\leq f(k-1,1,n)+f(k,i+1,n),
\]
where we have an inequality (but not an equality) because of possible cancellations in the reduced expression for $\pu(T_{k,i})$.

To obtain an upper bound on $f(k,i,n)$ we introduce a function $g(k,i,n)$ defined recurrently as follows:
\begin{itemize}
\item $g(k,i,0)=\|T_{k,i}\|=(k-1)i+1$, for all $k\geq1$, $i\geq0$;
\item $g(1,i,n)=\|\pu(B_1)\|=mn+1$, for all $i\geq0$, $n\geq0$;
\item $g(k,i,n+1)=g(k-1,1,n)+g(k,i+1,n)$, for all $k\geq2$, $i\geq0$, $n\geq0$.
\end{itemize}
These formulas define $g(k,i,n)$ recurrently for all values of $k\geq1$, $i\geq0$, $n\geq0$. Indeed, one proceeds by layers numbered by $n$, with the case $n=0$ given by the first formula, and the case of arbitrary $n$ given by the third one, which is valid for $k\geq2$. The remaining case $k=1$ is given by the second formula.

Clearly, $f(k,i,n)\leq g(k,i,n)$ for all $k\geq1$, $i\geq0$, $n\geq0$, so that the function $g$ can be used to establish the upper bound for $\|\pu(B_k)\|$:
\[
\|\pu(B_k)\|=\|\pu(T_{k,0})\|=f(k,0,n)\leq g(k,0,n).
\]

To estimate the growth of $g(k,i,n)$, consider the finite difference $g(k,i,n+1)-g(k,i,n)$. Applying the recurrent relation several times, we get:
\begin{align*}
g(k,i,n+1)&=g(k-1,1,n)+g(k,i+1,n)\\
&=g(k-1,1,n)+g(k-1,1,n-1)+g(k,i+2,n-1)\\
&\dots\\
=[g(k-1,1,n)&+g(k-1,1,n-1)+\dots+g(k-1,1,0)]+g(k,i+n+1,0).
\end{align*}
Similarly,
\[
g(k,i,n)=[g(k-1,1,n-1)+g(k-1,1,n-2)+\dots+g(k-1,1,0)]+g(k,i+n,0).
\]
Since by definition
\begin{align*}
g(k,i+n+1,0)&=(k-1)(i+n+1)+1,\\
g(k,i+n,0)&=(k-1)(i+n)+1,
\end{align*}
we have for all $k\geq2$, $i\geq0$, $n\geq0$:
\begin{equation*}\tag{5}\label{eq5}
g(k,i,n+1)-g(k,i,n)
=g(k-1,1,n)+(k-1).
\end{equation*}
In particular,
\begin{equation*}
g(k,1,n+1)-g(k,1,n)=g(k-1,1,n)+(k-1).
\end{equation*}
If we assume by induction on $k$ that $g(k-1,1,n)$ is a polynomial function in $n$ of degree $k-1$ (which is true for $k=2$ since $g(1,i,n)=mn+1$), then we conclude at once that $g(k,1,n)$ is a polynomial function in $n$ of degree $k$.

Now the formula~\eqref{eq5} similarly implies that $g(k,i,n)$ is a polynomial in $n$ of degree~$k$.

Therefore, $\|\pu(B_k)\|\leq g(k,0,n)\sim n^k$, which finishes the proof of Claim~\ref{claim2}.
\end{proof}

\begin{proof}[Proof of Proposition~\ref{prop1}]
According to Corollary~\ref{cor1} and Lemma~\ref{lem5}, $\|\phi^{\pm n}(A_i)\|\pr n$ for $i=1,\dots,m$, and according to Claims~\ref{claim1} and \ref{claim2}, $\|\phi^{\pm n}(B_k)\|\pr n^k$, for $k=1$, \dots, $m$. Therefore $\gr_{\phi_{m,k}}(n)\pr n^k$ and $\gr_{(\phi_{m,k})^{-1}}(n)\pr n^k$.
\end{proof}

\bigskip
\subsection{Lower bounds for the growth of $\phi$, $\phi^{-1}$}

\begin{prop}\label{prop2}
For the automorphism $\phi_{m,k}$ and $\phi^{-1}_{m,k}$, we have:
\[
\gr_{\phi_{m,k}}(n)\su n^k\text{\quad and \quad}\gr_{(\phi_{m,k})^{-1}}(n)\su n^k.
\]
\end{prop}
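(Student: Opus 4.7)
The plan is to bound the growth from below by applying Corollary~\ref{cor4} to the abelianizations $\phi^{ab}_{m,k}$ and $((\phi_{m,k})^{-1})^{ab}=(\phi^{ab}_{m,k})^{-1}$. From the formulas~(\ref{eq1}) of Proposition~\ref{prop:phi} one reads off that, with respect to the basis $\{\bar A_1,\dots,\bar A_m,\bar B_1,\dots,\bar B_k\}$ of $F^{ab}_{m+k}\cong\Z^{m+k}$, the automorphism $\phi^{ab}_{m,k}$ is represented by the matrix
\[
M=\begin{pmatrix} I_m & U\\ 0 & L\end{pmatrix},
\]
where $L$ is the $k\times k$ matrix with $L_{lj}=1$ for $l\le j$ and $0$ otherwise, and $U$ is the $m\times k$ matrix with $U_{ij}=1$ for $i\ge j$ and $0$ otherwise. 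Being upper triangular with $1$'s on the diagonal, $M$ has all eigenvalues equal to~$1$, so the hypothesis of Corollary~\ref{cor4} is satisfied.

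The next step is to show that the largest Jordan block of $M$ has size exactly $k+1$; this will yield $\gr_{\phi_{m,k}}(n)\su n^{(k+1)-1}=n^k$ via Corollary~\ref{cor4}. Setting $N:=M-I$, the block structure gives
\[
N^p=\begin{pmatrix} 0 & U(L-I)^{p-1}\\ 0 & (L-I)^p\end{pmatrix}\qquad\text{for all }p\ge1.
\]
A short induction on $p$ shows that $\bigl((L-I)^p\bigr)_{lj}=\binom{j-l-1}{p-1}$ for $j>l$ and $0$ otherwise; hence $(L-I)^k=0$, whereas $(L-I)^{k-1}$ has a single nonzero entry equal to $1$ at position $(1,k)$. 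Consequently $U(L-I)^{k-1}$ is the $m\times k$ matrix whose last column equals the first column of $U$, namely $(1,\dots,1)^T$, and whose other columns vanish. Therefore $N^{k+1}=0$ while $N^{k}\ne0$, so the largest Jordan block of $M$ has size $k+1$, and Corollary~\ref{cor4} gives $\gr_{\phi_{m,k}}(n)\su n^k$.

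For $\phi^{-1}_{m,k}$, the matrix $M^{-1}$ is again upper triangular with $1$'s on the diagonal, hence unipotent with every eigenvalue equal to $1$. Moreover, $M$ and $M^{-1}$ share the same Jordan block sizes: the identity $M-I=-(M^{-1}-I)\,M$ shows that the descending filtration by kernels of $(M-I)^p$ coincides with that of $(M^{-1}-I)^p$. Applying Corollary~\ref{cor4} to $M^{-1}$ therefore yields $\gr_{(\phi_{m,k})^{-1}}(n)\su n^k$ by the same argument.

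The main obstacle is the explicit identification of the Jordan block size, which the block-triangular form reduces to verifying $(L-I)^{k-1}\ne 0$; this is immediate from the binomial-coefficient formula. As a byproduct of the computation, the element $\bar B_k$ turns out to be a cyclic vector for the longest Jordan chain, with $N^k(\bar B_k)=\bar A_1+\cdots+\bar A_m\ne 0$. This shows that the maximum in the definition of $\gr_\phi(n)$ is attained (up to $\sim$-equivalence) on the generator $B_k$, which is precisely the certificate needed in the proof of Theorem~B.
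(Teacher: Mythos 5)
Your proof is correct and arrives at the same conclusion as the paper (Claim~\ref{claim3}) but by a cleaner route. The paper counts Jordan blocks by computing $\operatorname{rank}(\phi^{ab}_{m,k}-I)=k$ and $\operatorname{rank}\bigl[(\phi^{ab}_{m,k}-I)^2\bigr]=k-1$, concluding that there are $m$ Jordan blocks of which exactly one has size at least $2$, whence its size is $(m+k)-(m-1)=k+1$. You instead compute the nilpotency index of $N=M-I$ directly: using the block form $N^p=\bigl(\begin{smallmatrix}0&U(L-I)^{p-1}\\0&(L-I)^p\end{smallmatrix}\bigr)$ and the binomial identity $((L-I)^p)_{lj}=\binom{j-l-1}{p-1}$, you show $N^{k+1}=0$ while $N^k\neq0$, which pins down the largest block without any rank bookkeeping. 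Both routes are valid; yours is shorter, makes the Jordan chain explicit, and as a bonus exhibits $\bar B_k$ as a cyclic vector with $N^k(\bar B_k)=\bar A_1+\cdots+\bar A_m$, which is essentially the certificate the paper later proves separately in Corollary~\ref{cor:cert}. Your treatment of $\phi^{-1}_{m,k}$ via the identity $M-I=-(M^{-1}-I)M$ is also a slick replacement for the paper's one-line appeal to $(\phi^{-1}_{m,k})^{ab}=(\phi^{ab}_{m,k})^{-1}$.

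One further point worth flagging: your matrix $U$, with $U_{ij}=1$ exactly when $i\ge j$, is the one that actually follows from abelianizing the formulas in Proposition~\ref{prop:phi} (the trailing $\ov A_{j-1}\cdots\ov A_1$ cancels $\bar A_1,\dots,\bar A_{j-1}$ from the leading $A_1\cdots A_m$). The matrix $D_{mk}$ displayed in the paper's proof of Claim~\ref{claim3} has all entries equal to $1$, which does not match this abelianization; the paper's subsequent rank computations, carried out consistently with that incorrect block, happen to produce the same ranks $k$ and $k-1$, so the final answer is unaffected, but your block is the accurate one.
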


\begin{claim}\label{claim3}
The size of the largest Jordan block of the Jordan normal form for both $\phi_{m,k}^{ab}$, $(\phi_{m,k}^{-1})^{ab}$ is $k+1$.
\end{claim}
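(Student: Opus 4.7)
My plan is to compute the matrix $M$ of $\phi_{m,k}^{ab}$ explicitly from Proposition~\ref{prop:phi}, and then find the nilpotency index of $N:=M-I$; since $M$ will be unipotent, this index equals the size of the largest Jordan block. In the ordered basis $(A_1,\dots,A_m,B_1,\dots,B_k)$, abelianizing gives $\phi_{m,k}^{ab}(A_i)=A_i$ and $\phi_{m,k}^{ab}(B_j)=A_j+A_{j+1}+\cdots+A_m+B_1+B_2+\cdots+B_j$, so
\[
M=\begin{pmatrix} I_m & X \\ 0 & L \end{pmatrix},
\]
where $X$ is the $m\times k$ matrix with $X_{ij}=1$ iff $i\ge j$, and $L$ is the $k\times k$ lower-triangular matrix with $L_{ij}=1$ iff $i\ge j$. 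In particular $M$ is unipotent, so the claim for $\phi_{m,k}^{ab}$ reduces to showing that $N^k\ne 0$ while $N^{k+1}=0$.

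Next, letting $V:=L-I$ (strictly lower-triangular with $V_{ij}=1$ iff $i>j$), block multiplication gives
\[
N^\ell=\begin{pmatrix}0 & XV^{\ell-1}\\ 0 & V^\ell\end{pmatrix}
\]
for every $\ell\ge 1$. A short induction (or the explicit formula $(V^\ell)_{ij}=\binom{i-j-1}{\ell-1}$ for $i-j\ge \ell$, and $0$ otherwise) shows that $V^{k-1}$ has exactly one nonzero entry, namely $(V^{k-1})_{k,1}=1$, while $V^k=0$. Then $(XV^{k-1})_{i,1}=X_{i,k}$, which equals $1$ for $k\le i\le m$ and $0$ otherwise; since $k\le m$ this column is nonzero, so $XV^{k-1}\ne 0$ and hence $N^k\ne 0$. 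On the other hand $V^k=0$ immediately forces $N^{k+1}=0$. So the largest Jordan block of $M$ has size $k+1$.

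For $(\phi_{m,k}^{-1})^{ab}=M^{-1}$, I invoke the general fact that a unipotent matrix and its inverse have identical Jordan structures. Explicitly, since $N$ commutes with $M$ (and therefore with $M^{-1}$), one has $(M^{-1}-I)^\ell=(-N M^{-1})^\ell=(-1)^\ell N^\ell M^{-\ell}$, and more generally $\ker (M^{-1}-I)^\ell=M^\ell\ker N^\ell=\ker N^\ell$ for every $\ell$ (the second equality because $M$ preserves $\ker N^\ell$). So $M$ and $M^{-1}$ have the same dimensions of generalized eigenspaces at every level, hence the same Jordan structure, and the largest Jordan block of $(\phi_{m,k}^{-1})^{ab}$ also has size $k+1$. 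The main obstacle is identifying the critical nonzero entry $(V^{k-1})_{k,1}$ that survives left-multiplication by $X$; once this is verified, everything else is routine block-matrix arithmetic.
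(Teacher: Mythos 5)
Your proof is correct and takes a genuinely different route from the paper. You compute the nilpotency index of $N=\phi_{m,k}^{ab}-I$ directly: the block formula $N^\ell=\left(\begin{smallmatrix}0 & XV^{\ell-1}\\ 0 & V^\ell\end{smallmatrix}\right)$, the strict triangularity of $V$ forcing $V^k=0$, and the nonvanishing of $XV^{k-1}$ together give $N^{k+1}=0\ne N^k$, hence the maximal Jordan block has size $k+1$. The paper instead computes $\rk(\phi^{ab}-I)$ and $\rk\big[(\phi^{ab}-I)^2\big]$ to count, via kernel dimensions, the total number of Jordan blocks ($m$) and the number of blocks of size $\ge 2$ (exactly one), then gets $k+1$ from $m+k=c+(m-1)$. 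Your approach is shorter if one wants only the largest block size; the paper's incidentally determines the entire Jordan type (one block of size $k+1$ and $m-1$ trivial blocks). You also fill in a step the paper merely asserts: the identity $\ker(M^{-1}-I)^\ell=\ker N^\ell$, which justifies that $M$ and $M^{-1}$ share Jordan structure.

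One small slip in your matrix description: in the ordered basis $(A_1,\dots,A_m,B_1,\dots,B_k)$ with the column convention you implicitly use (zero block in the lower-left), the block $L$ is \emph{upper}-triangular, not lower: since $\phi_{m,k}^{ab}(B_j)=A_j+\cdots+A_m+B_1+\cdots+B_j$, the coefficient of $B_i$ is $1$ precisely when $i\le j$. So $V=L-I$ is strictly upper-triangular; its $(k-1)$-st power has its unique nonzero entry at $(1,k)$ rather than $(k,1)$, and $(XV^{k-1})_{i,k}=X_{i,1}=1$ for all $i$. This consistent transposition through your intermediate steps does not affect the conclusion — $V^k=0$ and $XV^{k-1}\ne 0$ hold regardless — but the indices as written are flipped from what the stated basis and convention actually produce.
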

\begin{proof}
It is sufficient to prove the claim just for $\phi_{m,k}^{ab}$, as $(\phi_{m,k}^{-1})^{ab}=(\phi^{ab}_{m,k})^{-1}$.

By direct inspection of formulas~\eqref{eq1}, we see that $\phi_{m,k}^{ab}$ is represented by the following $(m+k)\times (m+k)$ matrix:
\[
\phi_{m,k}^{ab}=\left[
\begin{array}{c|c}
I_m & D_{mk}\\[0.5ex]
\hline\\[-2ex]
O_{km} & C_{kk}
\end{array}\right],
\]
where $I_m$ is the identity $m\times m$ matrix, $O_{km}$ is the zero $k\times m$ matrix, and $D_{mk}$ and $C_{kk}$ are $m\times k$ and $k\times k$ matrices, respectively, given by the formulas:
\[
D_{mk}=\begin{bmatrix}
1 & 1 & \ldots & 1\\
1 & 1 & \ldots & 1\\
\hdotsfor{4}\\
1 & 1 & \ldots & 1
\end{bmatrix},\qquad
C_{kk}=\begin{bmatrix}
1 & 1 & \ldots & 1\\
0 & 1 & \ldots & 1\\
\hdotsfor{4}\\
0 & 0 & \ldots & 1
\end{bmatrix}.
\]

It is known that the number of Jordan blocks of a matrix $A\in GL(m+k,\C)$ with all eigenvalues $1$ is given by the number
\[
\dim\ker(A-I_{m+k})=m+k-\rk(A-I_{m+k}),
\]
and the number of Jordan blocks of $A$ with all eigenvalues $1$ and size at least $2$ is given by
\[
\dim\ker[(A-I_{m+k})^2] - \dim\ker(A-I_{m+k})=\rk(A-I_{m+k})-\rk[(A-I_{m+k})^2].
\]

An easy computation shows that
\[
\phi_{m,k}^{ab}-I_{m+k}=\left[
\begin{array}{c|c}
O_{mm} & D_{mk}\\[0.5ex]
\hline\\[-2ex]
O_{km} & C'_{kk}
\end{array}\right],
\text{\quad where\quad}
C'_{kk}=\begin{bmatrix}
0 & 1 & 1 & \ldots & 1\\
0 & 0 & 1 & \ldots & 1\\
\hdotsfor{5}\\
0 & 0 & 0 & \ldots & 1\\
0 & 0 & 0 & \ldots & 0
\end{bmatrix},
\]
and
\[
(\phi_{m,k}^{ab}-I_{m+k})^2=\left[
\begin{array}{c|c}
O_{mm} & D'_{mk}\\[0.5ex]
\hline
\\[-2ex]
O_{km} & C''_{kk}
\end{array}\right],
\]
where
\[
D'_{mk}=\begin{bmatrix}
0 & 1 & 2 & \ldots & k-1\\
0 & 1 & 2 & \ldots & k-1\\
\hdotsfor{5}\\
0 & 1 & 2 & \ldots & k-1
\end{bmatrix},\qquad
C''_{kk}=\begin{bmatrix}
0 & 0 & 1 & 2 & \ldots & k-2\\
0 & 0 & 0 & 1 & \ldots & k-3\\
\hdotsfor{6}\\
0 & 0 & 0 & 0 & \ldots & 1\\
0 & 0 & 0 & 0 & \ldots & 0\\
0 & 0 & 0 & 0 & \ldots & 0
\end{bmatrix}
\]

Note that $\rk C'_{kk}=k-1$, $\rk C''_{kk}=k-2$, hence $\rk(\phi_{m,k}^{ab}-I_{m+k})=k$ and $\rk[(\phi_{m,k}^{ab}-I_{m+k})^2]=k-1$. Therefore, 
\begin{gather*}
\text{ the number of Jordan blocks for $\phi_{m,k}^{ab}$} = (m+k)-k = m,\\
\text{ the number of Jordan blocks of size $\geq2$ for $\phi_{m,k}^{ab}$} = k-(k-1)=1.
\end{gather*}

This means that there is only one block of size bigger than $1$, let's denote this size $c$, and there are $m-1$ blocks of size $1$. Hence, $m+k=c+(m-1)\cdot 1$ so that $c=k+1$.
\end{proof}

\begin{proof}[Proof of Proposition~\ref{prop2}]
The Proposition follows now from Corollary~\ref{cor4} and Claim~\ref{claim3}. 
\end{proof}

\subsection{Lower bounds for the growth of $\phi^{ab}$}
In the proof of Theorem~B in section~\ref{sect:proof_main_thm} we needed a certificate for the growth of the abelianization of $\phi_{m,k}$, i.e. an element of the basis that realizes the maximum in the definition of the growth functions. Now we can provide it:
\begin{cor}\label{cor:cert}
With the above notation, let $\bar B_k$ be the image of the generator $B_k$ in the abelianization of $F$. Then
\[
\big\|(\phi_{m,k})^n(B_k)\big\|\sim \big|(\phi_{m,k}^{ab})^n(\bar B_k)\big|_1 \sim \big|(\phi_{m,k}^{ab})^n(\bar B_k)\big|_\infty \sim n^k.
\]
\end{cor}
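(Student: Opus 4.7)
The plan is to sandwich all three quantities by combining the upper bound already obtained in Claim~\ref{claim1} with a matching lower bound on the abelianized action. The inequalities
\[
\|\phi_{m,k}^n(B_k)\| \ge \big|(\phi_{m,k}^{ab})^n(\bar B_k)\big|_1 \ge \big|(\phi_{m,k}^{ab})^n(\bar B_k)\big|_\infty
\]
are immediate: the first is the observation opening the proof of Lemma~\ref{lem7} (applied to the single generator $B_k$ rather than to a maximum over generators), and the second is $|\cdot|_1\ge|\cdot|_\infty$. Since $\|\phi_{m,k}^n(B_k)\|\pr n^k$ by Claim~\ref{claim1}, the whole statement reduces to showing $\big|(\phi_{m,k}^{ab})^n(\bar B_k)\big|_\infty \su n^k$.

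For this lower bound I would reuse the block decomposition of $\phi_{m,k}^{ab}$ from the proof of Claim~\ref{claim3}: setting $N := C_{kk}-I_k$, so that $\phi_{m,k}^{ab}-I_{m+k} = \bigl(\begin{smallmatrix} 0 & D_{mk} \\ 0 & N\end{smallmatrix}\bigr)$, a one-line induction gives
\[
(\phi_{m,k}^{ab}-I_{m+k})^j = \begin{pmatrix} 0 & D_{mk}N^{j-1} \\ 0 & N^j\end{pmatrix} \qquad (j\ge 1).
\]
The key computation is to track the orbit of the last standard basis vector $e_k$ of the $B$-block under $N$. I would show by induction on $j$ that $N^j e_k$ has nonnegative entries supported on positions $\{1,\dots,k-j\}$, with entry $1$ in position $k-j$; the inductive step uses $Ne_\ell = e_1+\dots+e_{\ell-1}$ combined with this support bound. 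In particular $N^{k-1}e_k=e_1$ and $N^k e_k=0$.

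Plugging this in, $(\phi_{m,k}^{ab}-I_{m+k})^k \bar B_k = (D_{mk}e_1,0)^{\top}$, whose top half is the first column of $D_{mk}$, namely the all-ones vector in the $A$-block; this is nonzero, while $(\phi_{m,k}^{ab}-I_{m+k})^{k+1}\bar B_k = 0$. Expanding
\[
(\phi_{m,k}^{ab})^n(\bar B_k) = \sum_{j=0}^{k}\binom{n}{j}(\phi_{m,k}^{ab}-I_{m+k})^j \bar B_k,
\]
the $\bar A_1$-coordinate of the right-hand side is a polynomial in $n$ whose term of highest degree is $\binom{n}{k}\cdot 1 = \tfrac{n^k}{k!}+O(n^{k-1})$ with positive leading coefficient, so $\big|(\phi_{m,k}^{ab})^n(\bar B_k)\big|_\infty \su n^k$ for $n$ large, as required. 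The only place where some care is needed is the combinatorial induction establishing $N^{k-1}e_k=e_1$; everything else is straightforward block-triangular linear algebra that can be read off from Claim~\ref{claim3}.
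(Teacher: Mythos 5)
Your argument is correct, and its outer skeleton is the one the paper uses: the chain $\|\phi_{m,k}^n(B_k)\|\ge\big|(\phi^{ab}_{m,k})^n(\bar B_k)\big|_1\ge\big|(\phi^{ab}_{m,k})^n(\bar B_k)\big|_\infty$ (the observation from the proof of Lemma~\ref{lem7}) together with the upper bound of Claim~\ref{claim1} reduces the whole corollary to a lower bound of order $n^k$ on the abelianized vector. Where you genuinely differ is in how that lower bound is produced. The paper computes the full power $(\phi^{ab}_{m,k})^n$ explicitly: it records its block structure, derives the recursion $c_{\ell,n+1}=\sum_{i\le\ell}c_{in}$ for the entries, and shows by a double induction that $c_{in}=\binom{n+i-1}{i}$, so all coordinates of $(\phi^{ab}_{m,k})^n(\bar B_k)$ are known exactly and the largest is $\binom{n+k-1}{k}\sim n^k$. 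You instead set $M=\phi^{ab}_{m,k}-I$ and use the binomial expansion $(I+M)^n\bar B_k=\sum_{j=0}^{k}\binom{n}{j}M^j\bar B_k$, justified by your induction on $N^je_k$ which gives $M^{k+1}\bar B_k=0$; the same induction supplies nonnegativity of every contribution to the $\bar A_1$--coordinate and the exact coefficient $1$ at $j=k$, so the degree-$k$ term $\binom{n}{k}$ cannot be cancelled --- that is indeed the one delicate point, and you address it. Your route is leaner (one coordinate, one induction) and ties in directly with the nilpotency data of Claim~\ref{claim3}, in effect instantiating the lower bound of Corollary~\ref{cor4} with a concrete certificate; it is also robust to the precise shape of the off-diagonal block $D_{mk}$, needing only that its first column is a nonzero nonnegative vector. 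The paper's computation costs more bookkeeping but returns exact formulas for every coordinate, hence sharp asymptotics for $|\cdot|_1$ and $|\cdot|_\infty$ simultaneously and groundwork for the remark about $\phi^{-1}_{m,k}$.
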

\begin{proof}
An elementary computation with the matrix from the proof of Claim~\ref{claim3} shows that $(\phi_{m,k}^{ab})^n$ is represented by the matrix with the following structure:
\[
(\phi_{m,k}^{ab})^n=\left[
\begin{array}{c|c}
I_m & P_{mk}\\[0.5ex]
\hline\\[-2ex]
O_{km} & Q_{kk}
\end{array}\right],
\]
where $I_m$ is the identity $m\times m$ matrix, $O_{km}$ is the zero $k\times m$ matrix, and $P_{mk}$ and $Q_{kk}$ are $m\times k$ and $k\times k$ matrices, respectively, given by the formulas:
\[
P_{mk}=\begin{bmatrix}
c_{1n} & c_{2n} & \ldots & c_{kn}\\
c_{1n} & c_{2n} & \ldots & c_{kn}\\
\hdotsfor{4}\\
c_{1n} & c_{2n} & \ldots & c_{kn}
\end{bmatrix},\qquad
Q_{kk}=\begin{bmatrix}
1 & c_{1n} & \ldots & c_{k-2,n} & c_{k-1,n}\\
0 & 1 & \ldots & c_{k-3,n} & c_{k-2,n}\\
\hdotsfor{5}\\
0 & 0 & \ldots & 1 & c_{1n}\\
0 & 0 & \ldots & 0 & 1
\end{bmatrix}.
\]
Here $P_{mk}$ has all rows equal to each other and $Q_{kk}$ is upper triangular with the same number on each diagonal sequence of entries parallel to the main diagonal. The numbers $c_{1n}$, $c_{2n}$, \ldots, $c_{kn}$ satisfy the following identity, which follows from the matrix multiplication rule:
\[
c_{\ell,n+1}=\sum_{i=0}^\ell c_{in},
\]
with the convention that $c_{0n}=1$. We now show by double induction on pairs $(i,n)$ that $c_{in}=\binom{n+i-1}{i}$. Indeed, this equality is true for pairs $(i,n)=(0,n)$ with arbitrary $n$, since $c_{0n}=1$ by our convention, and for $(i,n)=(i,1)$ with arbitrary $i$, since $c_{i1}=1$ in the matrix representation for $\phi_{m,k}^{ab}$ (see the proof of Claim~\ref{claim3}). Now suppose that the equality $c_{in}=\binom{n+i-1}{i}$ is already proved for all pairs $(i,n)$ with $n$ fixed and $i$ arbitrary, and for all pairs $(i,n+1)$ with $0\le i\le \ell-1$. Then for the pair $(\ell,n+1)$ we get:
\[
c_{\ell,n+1}=\sum_{i=0}^{\ell-1} c_{in} + c_{\ell,n} = c_{\ell-1,n+1}+c_{\ell,n}=\binom{n+\ell-1}{\ell-1}+\binom{n+\ell-1}{\ell}=\binom{n+\ell}{\ell},
\]
as needed.

In particular, the coefficients of the vector $(\phi_{m,k}^{ab})^n(\bar B_k)$ are: $1$, $c_{1n}$, \dots, $c_{kn}$, with $c_{kn}=\binom{n+k-1}{k}$ being a polynomial $\sim n^k$.

Since, by Claim~\ref{claim1}, $\|\phi_{m,k}^n(B_k)\|\pr n^k$, we have:
\[
n^k \su \big\|\phi_{m,k}^n(B_k)\big\| \ge \big|(\phi_{m,k}^{ab})^n(\bar B_k)\big|_1 \ge \big|(\phi_{m,k}^{ab})^n(\bar B_k)\big|_\infty \su n^k,
\]
and we conclude that 
\[
\big\|\phi_{m,k}^n(B_k)\big\|\sim \big|(\phi_{m,k}^{ab})^n(\bar B_k)\big|_1 \sim \big|(\phi_{m,k}^{ab})^n(\bar B_k\big)|_\infty \sim n^k.\qedhere
\]
\end{proof}

\begin{rem} It can be proved in a similar manner that the same elements $B_k$ and $\bar B_k$ also serve as certificates for the growth of automorphisms $\phi_{m,k}^{-1}$ and $(\phi^{ab}_{m,k})^{-1}$, respectively. However the formulas involved are more complicated, and we don't need this result for our construction.
\end{rem}

\section{Open questions}
We conclude our paper with two open questions.
\begin{que} 
Does every CAT(0) free-by-cyclic group virtually embed into a RAAG?
\end{que}

\begin{que}
Do there exist finitely presented subgroups of RAAGs whose Dehn functions are either super-exponential or sub-exponential but not polynomial?
\end{que}



\begin{thebibliography}{99}
\bibitem{AgolCrit} 
Agol, I., Criteria for virtual fibering. \textit{J. Topol.} 1 (2008), no. 2, 269--284. 
\bibitem{AgolVFC} 
Agol, I., The virtual Haken conjecture. With an appendix by Ian Agol, Daniel Groves, and Jason Manning. 
\textit{Doc. Math.} 18 (2013), 1045--1087. 
\bibitem{BaBr}
Barnard, J., Brady, N., Distortion of surface groups in CAT(0) free-by-cyclic groups. \textit{Geom. Dedicata}, 120 (2006), 119--139. 
\bibitem{BeBr}
Bestvina, M., Brady, N., Morse theory and finiteness properties of groups. \textit{Invent. Math.}, 129 (1997), no. 3, 445--470. 
\bibitem{BORS}
Birget, J., Ol'shanskii, A., Rips, E., Sapir, M., Isomperimetric functions of groups and computational complexity of the word problem. \textit{Ann. of Math. (2)}, 156 (2002), no. 2, 476--518.
\bibitem{BF}
Brady, N., Forester, M., Snowflake geometry in CAT(0) groups. arXiv:1602.08379 [math.GR], 48 pages.
\bibitem{BradyRileyShort}
Brady, N., Dehn functions and non-positive curvature, in \textit{The geometry of the word problem for finitely generated groups.} (Birkh\"auser, Basel, 2007) 1--79.
\bibitem{BradyBridson}
Brady, N., Bridson, M., There is only one gap in the isoperimetric spectrum. \textit{Geom. Funct. Anal.}, 10 (2000), no.~5, 1053--1070.
\bibitem{BriGFA}
Bridson, M. R., Combings of semidirect products and $3$--manifold groups. \textit{Geom. Funct. Anal.}, 3 (1993), no. 3, 263--278. 
\bibitem{BriChap}
Bridson, M. R., The geometry of the word problem. \textit{Invitations to geometry and topology}, 29--91, 
\textit{Oxf. Grad. Texts Math.}, 7, Oxford Univ. Press, Oxford, 2002. 
\bibitem{BriMRL}
Bridson, M. R., On the subgroups of right-angled Artin groups and mapping class groups. \textit{Math. Res. Lett.}, 20 (2013), no. 2, 203--212. 
\bibitem{BriPoly}
Bridson, M. R., Polynomial Dehn functions and the length of asynchronously automatic structures. \textit{Proc. London Math Soc.}, (3) 85 (2002), 441--466.
\bibitem{BG}
Bridson, M. R., Gersten, S. M., The optimal isoperimetric inequality for torus bundles over the circle. \textit{Quart. J. Math. Oxford Ser. (2)}, 47 (1996), no. 185, 1--23. 
\bibitem{BH}
Bridson, M. R., Haefliger, A., \textit{Metric Spaces of Non-Positive Curvature.} Springer, 1999.
\bibitem{BriPit}
Bridson, M. R., Pittet, Ch., Isoperimetric inequalities for the fundamental groups of torus bundles over the circle. \textit{Geom. Dedicata}, 49 (1994), no. 2, 203--219. 
\bibitem{Dison}
Dison, W., An isoperimetric function for Bestvina--Brady groups. \textit{Bull. Lond. Math. Soc.} 40 (2008), no. 3, 384--394. 
\bibitem{DR}
Dison, W., Riley, T.~R., Hydra groups. \textit{Comment. Math. Helv.} 88 (2013), no. 3, 507--540. 
\bibitem{Ger} 
Gersten, S. M., The automorphism group of a free group is not a CAT(0) group. \textit{Proc. Amer. Math. Soc.} 121 (1994), no. 4, 999--1002. 
\bibitem{HagW1} 
Hagen, M. F., Wise, D. T., Cubulating hyperbolic free-by-cyclic groups: the irreducible case. \textit{Duke Math. J.} 165 (2016), no. 9, 1753--1813. 
\bibitem{HagW2} Hagen, M. F., Wise, D. T., Cubulating hyperbolic free-by-cyclic groups: the general case. \textit{Geom. Funct. Anal.} 25 (2015), no. 1, 134--179. 
\bibitem{HW}
Haglund, F., Wise, D. T., Special cube complexes. \textit{Geom. Funct. Anal.}, 17 (2008), no.~5, 1551--1620. 
\bibitem{HJ}
Horn, R.A., Johnson, C.R., \textit{Matrix Analysis}. 2nd ed. Cambridge University Press, 2013.
\bibitem{Howie}
Howie, J., On the asphericity of ribbon disc complements. \textit{Trans. Amer. Math. Soc.} 289 (1985), no.~1, 281--302. 
\bibitem{Lev}
Levitt, G., Counting growth types of automorphisms of free groups. \textit{Geom. Funct. Anal.} 19 (2009), no.~4, 1119--1146.
\bibitem{Pig}
Piggott, A., Detecting the growth of free group automorphisms by their action on the homology of subgroups of finite index. arXiv:math/0409319v1, 59 pages.
\bibitem{SBR}
Sapir, M., Birget, J.-C., Rips, E. Isoperimetric and isodiametric functions of groups. \textit{Ann. of Math. (2)}, 156 (2002), no. 2, 345--466.
\end{thebibliography}
\end{document}